\documentclass[a4paper,12pt,reqno]{amsart}

\usepackage{amsmath,amssymb,amsthm}
\usepackage{latexsym}
\usepackage{color}
\usepackage{graphicx}
\usepackage{mathrsfs}
\usepackage{enumerate}
\usepackage{enumitem}
\usepackage[abbrev]{amsrefs}
\usepackage[T1]{fontenc}
\usepackage{bbm}
\usepackage{mathtools}
\mathtoolsset{showonlyrefs=true}

\usepackage{tikz}
\usetikzlibrary{arrows.meta}
\usepackage{float}

\usepackage[colorlinks,
            linkcolor=blue,      
           anchorcolor=blue,  
          citecolor=red,       
          ]{hyperref}

\allowdisplaybreaks[4]

\theoremstyle{plain}
\newtheorem{thm}{Theorem}[section]
\newtheorem{lemm}[thm]{Lemma}
\newtheorem{prop}[thm]{Proposition}

\theoremstyle{definition}
\newtheorem{df}[thm]{Definition}

\makeatletter

\@addtoreset{equation}{section}
\makeatother

\renewcommand{\div}{\operatorname{div}}
\newcommand{\dB}{\dot{B}}

\newcommand{\supp}{\operatorname{supp}}

\renewcommand{\leq}{\leqslant}
\renewcommand{\geq}{\geqslant}

\newcommand{\n}[1]{{\left\|#1\right\|}}

\newcommand{\Id}{{\rm Id}}

\newcommand{\R}{\mathbb{R}}
\newcommand{\T}{\mathbb{T}}
\newcommand{\N}{\mathbb{N}}
\newcommand{\Z}{\mathbb{Z}}

\newcommand{\lp}[1]{\left[#1\right]}
\newcommand{\Mp}[1]{\left\{#1\right\}}
\renewcommand{\sp}[1]{\left(#1\right)}
\newcommand{\abso}[1]{\left|#1\right|}

\newcommand{\bb}{\mathbbm}

\begin{document}
\title[Non-uniqueness for the $L^2$ steady Navier--Stokes flow on the plane]
{Non-unique $L^2$ solutions to the stationary Navier--Stokes equations on the whole plane}
\author{Mikihiro Fujii}
\address{Graduate School of Science, Nagoya City University, Nagoya, 467-8501, Japan}
\email{fujii.mikihiro@nsc.nagoya-cu.ac.jp}
\keywords{stationary Navier--Stokes equations, non-uniqueness, Besov spaces}
\subjclass[2020]{35Q30, 35A02, 76D03}
\begin{abstract}
We prove that, for a suitable small external force in $\dot H^{-2}(\mathbb R^2)$, the stationary Navier--Stokes equations on $\mathbb R^2$ admit multiple solutions in $L^2(\mathbb R^2)$.
\end{abstract}
\maketitle


\section{Introduction}
Let us consider the stationary problem for the incompressible Navier--Stokes equations on the whole space $\R^n$ with $n \geq 2$:
\begin{align}\label{eq:sNS}
    -\Delta u + \mathbb P (u \cdot \nabla)u =  f,
    \qquad 
    \div u = 0,
\end{align}
where $u:\R^n \to \R^n$ is the unknown velocity field of the fluid, while $f:\R^n \to \R^n$ is the given external force satisfying $\div f = 0$.
We denote by $\mathbb P$ the Helmholtz projection defined by $\mathbb{P}=\mathscr{F}^{-1}\widehat{\mathbb P}(\xi)\mathscr{F}$, where $\widehat{\mathbb P}(\xi)=\Id - \xi \otimes \xi/|\xi|^2$.
The aim of this paper is to provide a negative answer to the uniqueness problem of small solutions to \eqref{eq:sNS} with $n=2$ in the critical space $L^2(\R^2)$ with some small external force in the corresponding space $\dot H^{-2}(\R^2)$.

Before stating our main result precisely, we recall known results related to our work.
For the case of $n \geq 3$, Chen \cite{Che-93} proved the existence and uniqueness of small solutions $u$ in the scaling critical space $L^n(\R^n)$ for given small external forces\footnote{In the original article \cite{Che-93}, the assumption is slightly different and he considered the external forces of the divergence form $f=\div F$ with some $F \in L^{n/2}(\R^n)$, whereas we may use a slightly weaker assumption $f \in \dot W^{-2,n}(\R^n)$ by the same proof.} $f \in \dot W^{-2,n}(\R^n)$.
For small solutions to \eqref{eq:sNS} in scaling critical Besov spaces $\dB_{p,q}^{n/p-1}(\R^n)$, the results from
Kaneko--Kozono--Shimizu \cite{Kan-Koz-Shi-19} and Chen \cite{Che-93} imply that uniqueness holds
for the case of $1 \leq p < n$ and $1 \leq q \leq \infty$ or $p=n$ and $1 \leq q \leq 2$.
The author \cite{Fujii-26} recently proved that uniqueness fails for all the other cases of $(p,q)$.
Note that exactly the same results holds for the critical Lizorkin--Triebel spaces. See \cites{Fuj-24,Tsu-19-ARMA,Tsu-19-DIE} for the related topics.
Therefore, $L^n(\R^n)$ is a threshold space between uniqueness and non-uniqueness.

For the two-dimensional case $n=2$, the situation is more difficult than that in higher dimensions.
A fundamental obstruction in unbounded two-dimensional domains is the Stokes paradox: because the fundamental solution of the two-dimensional Stokes system grows logarithmically, the corresponding stationary Stokes problem with nonzero total force does not, in general, admit a solution decaying at spatial infinity; see \cite[Chapter XII]{Galdi-11}.
Consequently, the construction of stationary Navier--Stokes flows in unbounded planar domains is particularly delicate except for perturbative regimes around specially chosen stationary background flows.
To construct the stationary solutions on $\R^2$, one should impose some special conditions on the external force.
Yamazaki \cite{Yam-09} proved the existence and uniqueness of small solutions in a function space close to $L^{2,\infty}(\R^2)$ under suitable antisymmetry assumptions on the external force.
Guillod--Korobkov--Ren \cite{GKR-23-CMP} proved the existence of solutions for compactly supported forces.
However, it has seemed to be a difficult problem to establish the solvability in the $L^2$ framework, that is the two-dimensional version of \cite{Che-93}, even if we impose any additional condition for external forces.
There are some results that focus on this problem from the negative aspect for uniqueness on the torus.
Lemari\'e–Rieusset \cite{Rieusset-25-JFA} constructed a nontrivial solution $u \in H^{-1}(\T^2)$ to \eqref{eq:sNS} with $f \equiv 0$.
In \cite{ABGN25}, the regularity framework was improved to
$\dot{H}^{-\varepsilon}(\T^2) \cap L^{2-\varepsilon}(\T^2)$ 
for arbitrary $0<\varepsilon < 1$.
Cheskidov--Hou \cite{Che-Hou-26} constructed a nontrivial unforced stationary Navier--Stokes flow in $B_{p,q}^s(\T^2)$ for every $s<0$ and $1 \leq p,q \leq \infty$.
However, none of these non-uniqueness results resolves the open uniqueness question for $L^2$ solutions, as the convex integration they used requires strong singularity for solutions.
On the other hand, uniqueness of two-dimensional small solutions holds in $\dB_{2,1}^0(\R^2)$, which is slightly narrower\footnote{We notice that $L^2(\R^2) = \dot B_{2,2}^0(\R^2)$ and $\dB_{2,q}^s(\R^2) \hookrightarrow \dB_{2,r}^s(\R^2)$ for $1 \leq q \leq r \leq \infty$ and $s \in \R$.}
than $L^2(\R^2)$.
Indeed, if $u,v$ are two solutions in $\dB_{2,1}^0(\R^2)$ with the same external force, then we see by the paraproduct estimate $\| fg \|_{\dB_{2,\infty}^{-1}} \leq C\| f \|_{\dB_{2,1}^0} \| g \|_{\dB_{2,\infty}^0}$ that
\begin{align}
    \n{u-v}_{\dB_{2,\infty}^0}
    &
    =
    \n{(-\Delta)^{-1}\mathbb{P} \div (u \otimes (u-v) + (u-v) \otimes v)}_{\dB_{2,\infty}^0}
    \\
    &
    \leq 
    C\n{u \otimes (u-v) + (u-v) \otimes v}_{\dB_{2,\infty}^{-1}}
    \\
    &
    \leq 
    C\sp{\n{u}_{\dB_{2,1}^0} + \n{v}_{\dB_{2,1}^0}}\n{u-v}_{\dB_{2,\infty}^0},
\end{align}
which implies $u \equiv v$, provided that $u$ and $v$ are small in $\dB_{2,1}^0(\R^2)$.
Note that the above argument does not work if the third index $q=1$ is replaced by $q>1$.

The aim of this manuscript is to reveal the non-uniqueness mechanism in $L^2$, where the previous results such as \cite{Rieusset-25-JFA,ABGN25,Che-Hou-26} did not achieved, in the whole plane case.
Moreover, we claim non-uniqueness not only in $L^2$ but also in sharp Besov spaces $\dB_{2,q}^0(\R^2)$ with $q>1$.
Note that the sharpness follows from uniqueness for $q=1$ as mentioned in above paragraph.
The main result of this paper reads as follows.
\begin{thm}\label{thm:main}
    Let $n=2$, $q>1$, and $\varepsilon>0$.
    Then,
    there exists a divergence-free external force $f \in \dB_{2,q}^{-2}(\R^2)$ generating two distinct solutions $u^+,u^- \in \dB_{2,q}^0(\R^2)$ to \eqref{eq:sNS} on $\R^2$ and  
    \begin{align}
        \| u^+ \|_{\dB_{2,q}^0} + \| u^- \|_{\dB_{2,q}^0} + \| f \|_{\dB_{2,q}^{-2}} < \varepsilon.
    \end{align}
\end{thm}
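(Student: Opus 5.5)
We construct the two solutions by a bifurcation-type argument modeled on the author's earlier non-uniqueness result in \cite{Fujii-26}, adapted to the plane. We look for $u^\pm = \pm a\, U + w^\pm$. Here $U \in \dB_{2,q}^0(\R^2)$ is an explicit divergence-free profile built from infinitely many frequency-localized pieces, and $a>0$ is a small amplitude. The force $f$ is chosen to be even in $U$, so that flipping the sign of the leading part is compatible with one and the same force. Concretely, take
\begin{align}
U = \sum_{k} \lambda_k \,\Phi_k ,
\end{align}
where each $\Phi_k$ is a divergence-free, high-frequency oscillation (say $\nabla^\perp$ of $\psi(x)\cos(N_k x_1)$ rescaled) at dyadic frequency $2^{j_k}$, with lacunary $j_k$. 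The coefficients satisfy $(\lambda_k) \in \ell^q \setminus \ell^1$, which is possible exactly because $q>1$. The quadratic self-interaction $\mathbb{P}\div(\Phi_k \otimes \Phi_k)$ produces a low-frequency output. Summing these outputs with $\lambda_k^2$ weights still gives an $\dB_{2,q}^{-2}$-bounded, even small, contribution. The cross terms between distinct $k$ are at high frequency and will be handled perturbatively.

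Next we set $f := -\Delta(a U_{\rm even}) + a^2\mathbb P\div(U\otimes U) + (\text{correction})$. The idea is that the linear part $-\Delta U$ is cancelled using a second family: we arrange the leading profile so that $\pm a U$ gives $\pm$ linear terms. To get a force independent of the sign, we use two interacting families $U = V + W$. Here $V$ is odd under a symmetry, the force sees only $-\Delta V$-free combinations, and the nonlinearity $(V\cdot\nabla)W + (W\cdot\nabla)V$ is arranged to balance $-\Delta W$. In the end $f$ must be the same for $\pm$. The cleaner route, following \cite{Fujii-26}, is to choose $u^\pm = \pm V + W + w^\pm$ with $-\Delta W + \mathbb P\div(V\otimes V) \approx f$. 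The sign of $V$ then only enters through the cross terms $\pm\mathbb P\div(V\otimes W + W\otimes V)$ and through $-\Delta V$. We design $V$ at high frequencies so that $-\Delta V$ is exactly cancelled by $\mathbb P\div(V\otimes W+W\otimes V)$ at leading order, which makes this combination odd in the sign and harmless. Each term is chosen to be small in $\dB_{2,q}$, with $V$ not small in $\dB_{2,1}^0$; that is forced by the uniqueness argument in the introduction.

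Finally, the remainders $w^\pm$ solve a perturbed equation
\begin{align}
-\Delta w + \mathbb P\div\big(w\otimes w + w\otimes(\pm V+W) + (\pm V+W)\otimes w\big) = R^\pm ,
\end{align}
with $R^\pm$ small. We solve it by a contraction in a space finer than $\dB_{2,q}^0$, such as $\dB_{2,1}^0$ or a weighted variant. The linear operator $w\mapsto(-\Delta)^{-1}\mathbb P\div(w\otimes(\pm V+W))$ must be a contraction there even though $V\notin\dB_{2,1}^0$. We get this by exploiting the high-frequency, lacunary structure of $V$: paraproduct estimates where $w$ is at low frequency and $V$ at high frequency gain from the separation of scales. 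We also need $u^+\neq u^-$, which follows since $u^+-u^- = 2V + (w^+-w^-)$ and $\|w^\pm\|$ is much smaller than $\|V\|$.

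\textbf{Main obstacle.} The main difficulty is the Stokes paradox at low frequencies in 2D. Inverting $-\Delta$ on $\mathbb P\div(\cdot)$ of an $L^1$-type product produces the $\dB_{2,\infty}^{0}$ but not the $\dB_{2,q}^0$ bound at frequency $\to0$. The zero-frequency (total momentum) part of $V\otimes V$ and of $V\otimes W$ must therefore vanish. I would enforce this by symmetry, for example choosing profiles with odd/antisymmetric structure as in \cite{Yam-09}, so that $\widehat{V\otimes V}(0)$ is trace-like and killed by $\mathbb P\div$. This symmetry must then be preserved in the fixed point, and I expect this closure to be the hardest step.
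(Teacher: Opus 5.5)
Your ``cleaner route'' $u^\pm=W\pm V$ (plus correctors) is in fact the right skeleton. It is the Golovkin decomposition the paper uses, with your $W$ as the background $v_N$ and your $V$ as $w_N$. It reduces the theorem to one statement: there is a nonzero $V$ with $-\Delta V+\mathbb P\div(V\otimes W+W\otimes V)=0$ while $V$ and $W$ are both small in $\dB^0_{2,q}$. Then $f:=-\Delta W+\mathbb P\div(W\otimes W+V\otimes V)$ works exactly, and no corrector is needed. This statement is the whole content of the proof, and your proposal never supplies it: ``we design $V$ so that $-\Delta V$ is exactly cancelled at leading order'' restates the goal.

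Since the high$\times$low and low$\times$high paraproduct parts are bounded by $C\n{W}_{\dB^0_{2,q}}$, only the high$\times$high$\to$low interaction can balance $-\Delta V$. At output frequencies $|\xi|\ll 2^k$ it is, to leading order, $i\widehat{\mathbb P}(\xi)\mathscr F[\Delta_kV\otimes\Delta_kW](0)\,\xi$. After inverting $-\Delta$ this is, in the scalar variable, a $\cos(2\theta_\xi)/|\xi|$ profile of equal $L^2$ strength on \emph{every} lower dyadic shell. The paper makes this an exact construction. With its building blocks (not your $V,W$), $\psi\approx\sum_k\omega_{N,k}W_k$ and $\phi_N=-\sum_j\nu_{N,j}V_j$ with $\kappa_j[W_k]=\delta_{j,k}$. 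The leading part of the Oseen operator then becomes $\Id-H_N$, $(H_Np)_k=\sum_{j>k}\nu_{N,j}p_j$, whose kernel is spanned by $\omega_{N,j}=N/(N+|j|)$, while $|\nu_{N,j}|\simeq 1/(N+|j|)$. Both sequences lie in $\ell^q\setminus\ell^1$, which is exactly where $q>1$ enters. The exact kernel element comes from a Neumann series around the right inverse $\mathcal R_N$ of $\Id-\mathcal W H_N\kappa$. This is done on the constrained space $\{\sum_j\nu_{N,j}\kappa_j[\psi]=0\}$ inside the Fourier-side H\"older space $X^q$, with remainder $\mathcal E_N$ of size $O(1/N)$.

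Your last paragraph goes in the wrong direction. Enforcing by symmetry that the zero-frequency moment of $V\otimes W$ vanishes destroys precisely this interaction. In the paper's notation it means $\kappa\psi=0$, so the Oseen operator reduces to $\Id+\mathcal E_N$ with $\n{\mathcal E_N}_{X^q\to X^q}\le C/N$; this has trivial kernel, i.e.\ $V=0$. The paper instead places $v_N$ and $w_N$ in \emph{different} symmetry classes ($\widetilde L^2_{\rm sym}$ and $L^2_{\rm sym}$). The zero-frequency moments of $v_N\otimes v_N$ and $w_N\otimes w_N$ are then scalar matrices, annihilated by $\widehat{\mathbb P}(\xi)\xi$. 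Together with the $L^2$-H\"older regularity this gives $f_N\in\dB^{-2}_{2,q}$ with small norm. The moment of $v_N\otimes w_N$, by contrast, is a nonzero traceless matrix, and it drives the kernel. The Stokes-paradox tail is handled by the constraint $\sum_j\nu_{N,j}\kappa_j[\psi]=0$, not by cancellation.

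Your corrector step also fails as stated. If $V$ nearly solves the Oseen equation, then $w\mapsto(-\Delta)^{-1}\mathbb P\div(w\otimes W+W\otimes w)$ has $-1$ as an approximate eigenvalue. So the linearization at $W$ is degenerate in the direction $V$, and no Neumann-series contraction is available near it. Passing to $\dB^0_{2,1}$ does not help, because the high$\times$high$\to$low term is not contractive there either. In the sequence model, truncations of $\omega_N$ adjusted to the constraint give $\n{(\Id-H_N)p}_{\ell^1}/\n{p}_{\ell^1}\to 0$.

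Your first paragraph has a related problem. The low-frequency output of $\mathbb P\div(\Phi_k\otimes\Phi_k)$ is not in $\dB^{-2}_{2,q}$ even for a single $k$ unless its zero-frequency moment is a scalar matrix. Finally, the convex-integration method of your reference works, as the paper notes, only for $q>2$, so it cannot be the route to $L^2$.
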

We outline\footnote{To simplify the presentation, some of the notation in this section differs from that used in subsequent sections.} the proof of Theorem \ref{thm:main}. 
The author's original plan was to use the method proposed in the author's previous
work \cite{Fujii-26}, in which one incorporates into each building block of a convex integration scheme a flow for which the relevant nonlinear estimate breaks
down.  In this method, however, the condition $q>2$ is essential in Besov spaces with zero differential index.  
It therefore does not seem suited to proving the failure of uniqueness in $L^2$, which is the case of $q=2$, by this method.  
We therefore do not use convex integration, but instead adapt the Golovkin method \cite{Gol-64} for proving non-uniqueness of solutions to elliptic equations with quadratic nonlinear terms.
See \cite{Mengual-Solera-2026-arXiv,Castro-Faraco-Mengual-Solera-2025-arXiv} for recent non-uniqueness results for the nonstationary Navier--Stokes and SQG equations obtained by the Golovkin method. 
We note that those works are based on instability of self-similar solutions,
whereas our construction is different.

The Golovkin method focuses on the elementary fact that, if the quadratic equation
$x+ax^2=b$ has two distinct roots, then they may be written in the
symmetric form $x=\alpha\pm\beta$ with $\beta \neq 0$.  In the same spirit, we look for two
solutions $u^+$ and $u^-$ of \eqref{eq:sNS} with a single external force $f$ of the form
\begin{align}\label{eq:idea-pair}
    u^+=v+\varepsilon w,
    \qquad
    u^-=v-\varepsilon w
\end{align}
with $w \not \equiv 0$.
Under this notation, \eqref{eq:sNS} is reformulated as
\begin{align}
    &-\Delta w
    +\mathbb P\bigl((v\cdot\nabla)w+(w\cdot\nabla)v\bigr)=0,
    \label{eq:idea-Oseen}
    \\
    &-\Delta v
    +\mathbb P\bigl((v\cdot\nabla)v
    +\varepsilon^2(w\cdot\nabla)w\bigr)=f.
    \label{eq:idea-average}
\end{align}
Thus, the first task is to determine a suitable small $v\in\dB_{2,q}^0(\R^2)$ and construct a nonzero solution
$w\in\dB_{2,q}^0(\R^2)$ of the linear Oseen equation
\eqref{eq:idea-Oseen}.  
We then use the second equation to define a divergence-free force
$f$ and prove that it is small in $\dB_{2,q}^{-2}(\R^2)$.  
Notice that a nontrivial Oseen solution $w$ for some arbitrarily small $v$ could not exist if one had
the bilinear estimate
\begin{align}\label{eq:idea-forbidden-estimate}
    \|\mathbb P(a\cdot\nabla)b\|_{\dB_{2,q}^{-2}}
    =
    \|\mathbb P\div(a\otimes b)\|_{\dB_{2,q}^{-2}}
    \leq 
    C
    \|a\|_{\dB_{2,q}^0}
    \|b\|_{\dB_{2,q}^0}
\end{align}
wit divergence-free vector fields $a,b \in \dB_{2,q}^0(\R^2)$.
Thus, our construction essentially must depend on the failure of
\eqref{eq:idea-forbidden-estimate}.

To analyze the Oseen equation, we rewrite it in scalar form on the Fourier side.  
Since the Fourier transform of a divergence-free vector field is parallel to $\xi^\perp$, we pass from $v$ and $w$ to scalar functions $\phi$ and $\psi$ by setting
\begin{align}\label{eq:idea-scalarization}
    \phi(\xi):=i\frac{\xi^\perp}{|\xi|}\cdot\widehat v(\xi),
    \qquad 
    \psi(\xi):=i\frac{\xi^\perp}{|\xi|}\cdot\widehat w(\xi).
\end{align}
After applying $(-\Delta)^{-1}$ and rewriting \eqref{eq:idea-Oseen} in terms of $\phi$ and $\psi$, we see that
\begin{align}\label{eq:idea-scalar-oseen}
    \psi+\mathcal O[\phi,\psi]=0,
\end{align}
where $\mathcal O$ is the bilinear Fourier multiplier
\begin{align}\label{eq:idea-Oseen-operator}
    \mathcal O[\phi,\psi](\xi)
    =\frac{1}{2\pi|\xi|}\int_{\R^2}
    \Gamma(\xi,\eta)
    \phi(\xi-\eta)\psi(\eta)\,d\eta,
\end{align}
with some homogeneous function $\Gamma(\xi,\eta)$.
Since, in our analysis, the principal contribution to \eqref{eq:idea-Oseen-operator} comes from the high$\times$high$ \to $low interaction $|\xi|\ll|\eta|$, the multiplier is expanded by $\Gamma(\xi,\eta)=\sin (2(\theta_\eta - \theta_\xi)) + O(|\xi|/|\eta|)$, where $\theta_\xi$ denotes the polar angle of $\xi$.
Consequently, the leading part of the Oseen operator $\mathcal{O}$ has the form
\begin{align}\label{eq:idea-angular-decomposition}
    \begin{split}
    \mathcal O[\phi,\psi](\xi)
    ={}&
    \frac{\cos(2\theta_\xi)}{2\pi|\xi|}
    \int_{\R^2}\sin(2\theta_\eta)\phi(-\eta)\psi(\eta)\,d\eta
    \\
    &-
    \frac{\sin(2\theta_\xi)}{2\pi|\xi|}
    \int_{\R^2}\cos(2\theta_\eta)\phi(-\eta)\psi(\eta)\,d\eta
    +\text{lower order  terms}
    \\
    =:{}&
    \mathcal O_1[\phi,\psi](\xi)+\mathcal O_2[\phi,\psi](\xi)+\text{lower order  terms},
    \end{split}
\end{align}
provided that $|\xi| \ll |\eta|$ for all $\eta \in \supp \psi$.
To construct appropriate $\phi$ and $\psi$, we choose a positive smooth function $\chi=\chi(r) \in C_c^\infty((0,\infty))$ satisfying $\chi \equiv 1$ on some neighborhood of $r=1$ and
$\sum_{j\in\Z}\chi(2^{-j}r)=1$ for $r>0$.  Choose a smooth function $\rho =\rho(r)$ supported in a smaller annulus on which $\chi=1$.  For each $j\in\Z$, define
\begin{align}\label{eq:idea-building-blocks}
    V_j(\xi):=\rho(2^{-j}|\xi|)
    \frac{\sin(4\theta_\xi)}{|\xi|},
    \qquad
    W_j(\xi):=\chi(2^{-j}|\xi|)
    \frac{\cos(2\theta_\xi)}{|\xi|}.
\end{align}
Under the notation introduced in Section \ref{sec:pre}, these functions satisfy
$V_j\in\widetilde L^2_{\rm sym}$ and $W_j\in L^2_{\rm sym}$.  
The change of variables $(\eta_1,\eta_2) \mapsto (-\eta_1,\eta_2)$ makes $\mathcal{O}_2[V_j,W_j]$ vanish.  Moreover, normalizing $\rho$ suitably, we see that
\begin{align}\label{eq:idea-block-action}
    \chi(2^{-k}|\xi|)\mathcal O_1[V_j,W_j](\xi)=W_k(\xi),
    \qquad j>k.
\end{align}
Our plan is to sum $V_j$ and $W_j$ with respect to $j \in \Z$ to find the desired $\phi$ and $\psi$.
To this end, the choice of coefficients is crucial.
Let real sequences
$\nu=\{\nu_j\}_{j\in\Z}$ and
$\omega=\{\omega_j\}_{j\in\Z}$ in $\ell^q (\Z) \setminus \ell^1(\Z)$ satisfy $|\nu_j|, |\omega_j| \simeq |j|^{-1}$, $\|\nu\|_{\ell^q}\simeq\varepsilon$, $\|\omega\|_{\ell^q}\simeq 1$, and
\begin{align}\label{eq:idea-sequence-model}
    \omega_k-\sum_{j>k}\nu_j\omega_j=0,
    \qquad k\in\Z.
\end{align}
Note that \eqref{eq:idea-sequence-model} is the sequence model for the scalar equation \eqref{eq:idea-scalar-oseen}, with the convection term simplified to the high$\times$high$ \to $low interaction.
By \eqref{eq:idea-sequence-model}, we now determine $\phi$ and the principal part of $\psi$ as
\begin{align}\label{eq:idea-principal-functions}
    \phi(\xi):=-\sum_{j\in\Z}\nu_jV_j(\xi),
    \qquad
    \psi^{({\rm p})}(\xi):=\sum_{j\in\Z}\omega_jW_j(\xi).
\end{align}
We henceforth fix this $\phi$ and the corresponding vector field $v=\mathscr{F}^{-1}[-i(\xi^\perp/|\xi|)\phi(\xi)]$.  
Absorbing all product interactions other than the high$\times$high$ \to $low interactions into the harmless remainder, we obtain
\begin{align}\label{eq:idea-formal-kernel}
    &\psi^{({\rm p})}(\xi)+\mathcal O[\phi,\psi^{({\rm p})}](\xi)
    \\
    &\quad 
    =\psi^{({\rm p})}(\xi)-\sum_{k\in\Z}\sum_{j>k}\nu_j\omega_j
    \chi(2^{-k}|\xi|)\mathcal O_1[V_j,W_j](\xi)
    +\text{lower order  terms}
    \\
    &\quad 
    =\sum_{k\in\Z}\omega_kW_k(\xi)-\sum_{k\in\Z}\sum_{j>k}\nu_j\omega_jW_k(\xi)
    +\text{lower order  terms}
    \\
    &\quad 
    =\text{lower order  terms}.
\end{align}

In order to turn this formal calculation into an exact solution, we work in the function space
$X^q_{\rm sym}$ consisting of all scalar fields in $L^2_{\rm sym}$ whose corresponding vector field belongs to $\dB_{2,q}^0(\R^2)$ and which have a half-order $L^2$ H\"older regularity for Fourier variables in dyadic shells.   
Then, we show that there exists a small perturbation $\widetilde\psi$ such that $\psi=\psi^{({\rm p})}+\widetilde\psi$ solves \eqref{eq:idea-Oseen} with the estimate $\|\widetilde\psi\|_{X^q}\ll \|\psi^{({\rm p})}\|_{X^q}$.
As we may check that $\|\psi^{({\rm p})}\|_{X^q} \simeq 1$ and $\| \phi \|_{X^q} \lesssim \varepsilon$, 
both $\psi$ and $w=\mathscr{F}^{-1}[-i(\xi^\perp/|\xi|)\psi(\xi)]$ are nonzero, and we have $\| u^\pm \|_{\dB_{2,q}^0} \leq \| v \|_{\dB_{2,q}^0} + \varepsilon\| w \|_{\dB_{2,q}^0} \lesssim \varepsilon$.
This completes the analysis for the Oseen equations \eqref{eq:idea-Oseen}.

It remains to verify that the force determined by \eqref{eq:idea-average}
is small in $\dB_{2,q}^{-2}(\R^2)$.  
Although $v$ and $\varepsilon w$ are small in $\dot B_{2,q}^0(\R^2)$, this is not a trivial fact since the paraproduct is not bounded from $\dB_{2,q}^0(\R^2)\times\dB_{2,q}^0(\R^2)$ to $\dB_{2,q}^{-1}(\R^2)$.
The key ingredients to overcome this are the symmetry of the flow and the $L^2$ H\"older regularity ensured by $X^q$ norm.  
Since the symmetric condition for $w$ following from $\psi \in X^q_{\rm sym}$ implies that
$\mathscr{F}[\Delta_k w \otimes \Delta_\ell w](0)$ is a scalar matrix, 
we see that $i\widehat{\mathbb P}(\xi)\mathscr{F}[\Delta_k w \otimes \Delta_\ell w](0)\xi=0$, which yields
\begin{align}
    &
    \mathscr{F}\lp{\mathbb P \div(\Delta_k w \otimes \Delta_\ell w)}(\xi)
    =
    i\widehat{\mathbb P}(\xi)\mathscr{F}[\Delta_k w \otimes \Delta_\ell w](\xi)\xi
    \\
    &\quad 
    =
    i\widehat{\mathbb P}(\xi)
    \bigl(\mathscr{F}[\Delta_k w \otimes \Delta_\ell w](\xi)-\mathscr{F}[\Delta_k w \otimes \Delta_\ell w](0)\bigr)\xi
    \\
    &\quad =
    i\widehat{\mathbb P}(\xi)
    \left(
    \int_{\R^2}
    (\widehat{\Delta_k w}(\xi-\eta)-\widehat{\Delta_k w}(-\eta)) \otimes \widehat{\Delta_\ell w}(\eta)\, d\eta
    \right)\xi.
\end{align}
The $L^2$ H\"older regularity provided by $X^q$ norm controls this
difference and yields a summable gain in the high$\times$high$ \to $low
interaction.  
The symmetry of $v$ also gives the same structure
for $v\otimes v$, and hence we have
\begin{align}
    \|\mathbb P\div(v\otimes v)\|_{\dB_{2,q}^{-2}}
    \leq C\|\phi\|_{X^q}^2,
    \qquad
    \|\mathbb P\div(w\otimes w)\|_{\dB_{2,q}^{-2}}
    \leq C\|\psi\|_{X^q}^2.
\end{align}
Since $\|\phi\|_{X^q}\lesssim\varepsilon$ and
$\|\psi\|_{X^q}\simeq1$, we obtain
$\|f\|_{\dB_{2,q}^{-2}}\lesssim \varepsilon$ by \eqref{eq:idea-average}, and complete the proof.

This paper is organized as follows.
In Section \ref{sec:pre}, we prepare useful tools for our analysis.
In Section \ref{sec:Oseen}, we establish the linear analysis.
Then, we prove the main result in Section \ref{sec:pf}.
\section{Preliminaries}\label{sec:pre}
In this section, we introduce notation and elementary facts, and establish the simplified model mentioned in the preceding section.
\subsection{Notations}
We first summarize some notations used in this paper.
For given Schwartz functions $\varphi$, $\psi$ on $\R^2$, the Fourier transform and its inverse are defined by 
\begin{align}
    \mathscr{F}[\varphi](\xi)=\widehat{\varphi}(\xi)&:=\frac{1}{2\pi}\int_{\R^2} e^{-ix \cdot \xi}\varphi(x)\, dx,
    \\
    \mathscr{F}^{-1}[\psi](x)&:=\frac{1}{2\pi}\int_{\R^2} e^{ix \cdot \xi}\psi(\xi)\, d\xi.
\end{align}
Next, we prepare some function spaces.
Let us first define
\begin{align}
    L^2_{\rm sym}
    &:=\Mp{\varphi \in L^2(\R^2;\R)\ ;\ 
    \begin{aligned}
        &\varphi(-\xi_2,\xi_1)=-\varphi(\xi_1,\xi_2),\\ 
        &\varphi(-\xi_1,\xi_2)=\varphi(\xi_1,\xi_2)    
    \end{aligned}
    },
    \\
    \widetilde{L}^2_{\rm sym}
    &:=\Mp{\varphi \in L^2(\R^2;\R)\ ;\ 
    \begin{aligned}
        &\varphi(-\xi_2,\xi_1)=\varphi(\xi_1,\xi_2),\\ 
        &\varphi(-\xi_1,\xi_2)=-\varphi(\xi_1,\xi_2)    
    \end{aligned}
    }.
\end{align}
Let $\chi \in C_c^\infty([0,\infty);[0,1])$ satisfy
\begin{align}
    \supp \chi \subset \lp{2^{-\frac{3}{4}},2^{\frac{3}{4}}},
    \qquad
    \chi(r) = 1 \quad {\rm for\ all\ }
    r \in \lp{2^{-\frac{1}{4}},2^{\frac{1}{4}}},
\end{align}
and 
\begin{align}
    \sum_{j \in \Z} \chi(2^{-j}r)=1 \qquad {\rm for\ all\ }r>0.
\end{align}
We call $\{\chi_j(\xi):=\chi(2^{-j}|\xi|)\}_{j \in \Z}$ the Littlewood--Paley decomposition and define $\Delta_jf:=\mathscr{F}^{-1}[\chi_j(\xi)\widehat{f}(\xi)]$ for tempered distributions $f$.
Recall that the Besov norm is defined by 
\begin{align}
    \| g \|_{\dB_{p,q}^s}
    :=
    \n{\Mp{2^{sj}\| \Delta_j g \|_{L^p}}_{j \in \Z}}_{\ell^q}
\end{align}
for $1 \leq p,q \leq \infty$ and $s \in \R$.
\begin{df}
For $1 \leq q \leq 2$, we define
\begin{align}
    X^q
    &:=\Mp{\psi \in L^2(\R^2)\ ;\ \n{\psi}_{X^q}<\infty},
    \\
    \n{\psi}_{X^q}
    &:=
    \Mp{\sum_{j \in \Z}\sp{\n{\chi_j\psi}_{L^2}+\sup_{\eta \in \R^2 \setminus \{0\}}
    \frac{\n{(\chi_j\psi)(\cdot+\eta)-(\chi_j\psi)(\cdot)}_{L^2}}{\sp{\min \{1,2^{-j}|\eta|\}}^{1/2}}}^q}^{\frac{1}{q}}.
\end{align}
Moreover, we denote
\begin{align}
    X^q_{\rm sym}
    :=\Mp{\psi \in L^2_{\rm sym}\ ;\ \n{\psi}_{X^q}<\infty},
    \quad 
    \widetilde{X}^q_{\rm sym}
    :=\Mp{\psi \in \widetilde{L}^2_{\rm sym}\ ;\ \n{\psi}_{X^q}<\infty}.
\end{align}
Note that $\n{\varphi}_{\dB_{2,q}^0} \leq \|\widehat{\varphi}\|_{X^q}$ holds for all $\varphi \in L^2(\R^2)$ with $\widehat{\varphi} \in X^q$.   
\end{df}
For $\xi \in \R^2\setminus \{0\}$, 
we define $\theta_\xi \in \T:=\R/(2\pi \Z)$ via 
\begin{align}
    \cos \theta_\xi = \frac{\xi_1}{|\xi|},
    \qquad
    \sin \theta_\xi = \frac{\xi_2}{|\xi|}.
\end{align}
For $\psi\in L^2(\R^2)$, we define 
\begin{align}
    \mathcal{U}[\psi]:=\mathscr{F}^{-1}\lp{-i \psi(\xi) \frac{\xi^\perp}{|\xi|}},
    \qquad
    \xi^\perp:=(-\xi_2,\xi_1).
\end{align}
Note that $\mathcal{U}[\psi]$ is a $\R^2$-valued divergence-free vector field in $L^2(\R^2)$ if $\psi\in L^2_{\rm sym} \cup \widetilde L^2_{\rm sym}$.
Conversely, every divergence-free vector field $u \in L^2(\R^2)$ is written as $u=\mathcal{U}[\psi]$, where
\begin{align}
    \psi(\xi):=i \frac{\xi^\perp}{|\xi|} \cdot \widehat{u}(\xi).
\end{align}

Finally, we define building blocks of our non-unique solutions.
Let $\rho \in C_c^{\infty}([0,\infty);[0,\infty))$ satisfy
\begin{align}
    \int_0^\infty \frac{\rho(r)}{r}\, dr = 4,
    \qquad
    \supp \rho \subset \lp{2^{-\frac{1}{8}},2^{\frac{1}{8}}}.
\end{align}
For $j \in \Z$, we define $\rho_j(\xi):=\rho(2^{-j}|\xi|)$.
By the support relation, it holds $\rho_j(\xi)=\chi_j(\xi)\rho_j(\xi)$ for all $\xi \in \R^2$.
For $j \in \Z$, we define 
\begin{align}
    V_j(\xi):=\rho_j(\xi)\frac{\sin (4\theta_\xi)}{|\xi|},
    \qquad
    W_j(\xi):=\chi_j(\xi)\frac{\cos (2\theta_\xi)}{|\xi|}.
\end{align}
A direct calculation yields $V_j \in \widetilde L_{\rm sym}^2$ and $W_j \in L^2_{\rm sym}$ for all $j \in \Z$.
By $V_j(\xi)=2^{-j}V_0(2^{-j}\xi)$ and $W_j(\xi)=2^{-j}W_0(2^{-j}\xi)$, we see that
\begin{align}
    \n{\nabla^m V_j}_{L^p}
    &=
    2^{(\frac{2}{p}-1-m)j}\n{\nabla^m V_0}_{L^p},
    \\
    \n{\nabla^m W_j}_{L^p}
    &=
    2^{(\frac{2}{p}-1-m)j}\n{\nabla^m W_0}_{L^p}
\end{align}
for all $1 \leq p \leq \infty$ and $m \in \N \cup \{0\}$.
We also use the following notation:
\begin{align}
    \mathcal{W}p(\xi)
    :=
    \sum_{j \in \Z}p_jW_j(\xi),
    \qquad p=\{p_j\}_{j \in \Z} \in \R^\Z.
\end{align}
The above right hand side is finite for fixed $\xi$, since the only finitely many $W_j$ are nonzero.
\subsection{Elementary lemmas}
\begin{lemm}
    There exists a positive constant $C$ such that 
    \begin{align}
        \n{\widehat{\mathcal{U}[\psi]}}_{X^q}
        \leq 
        C
        \n{\psi}_{X^q}
    \end{align}
    for all $1 \leq q \leq 2$ and $\psi \in X^q$.
\end{lemm}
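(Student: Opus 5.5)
Since $\widehat{\mathcal{U}[\psi]}(\xi) = -i\,\psi(\xi)\,\xi^\perp/|\xi|$, the claim reduces to showing that multiplication by the zero-homogeneous symbol $m(\xi):=\xi^\perp/|\xi|$ is bounded on $X^q$. The $X^q$ norm is computed shell by shell, and $m$ is smooth on each dyadic annulus with derivatives of size $2^{-j}$ there. So we fix $j$ and bound the two quantities entering the norm,
\begin{align}
\n{\chi_j m\psi}_{L^2}
\qquad\text{and}\qquad
\sup_{\eta\neq0}\frac{\n{(\chi_j m\psi)(\cdot+\eta)-(\chi_j m\psi)(\cdot)}_{L^2}}{(\min\{1,2^{-j}|\eta|\})^{1/2}},
\end{align}
by $C$ times the corresponding shell quantities for $\psi$ at scales $j-1,j,j+1$. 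Then we take $\ell^q$ sums; these are uniform in $1\le q\le 2$.

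The $L^2$ term is immediate from $|m|=1$. For the difference term, we introduce a fattened cutoff $\widetilde\chi_j:=\chi_{j-1}+\chi_j+\chi_{j+1}$, which equals $1$ on $\supp\chi_j$, and write $\chi_j m\psi = (\widetilde\chi_j m)\cdot(\chi_j\psi)$. Setting $a:=\widetilde\chi_j m$ and $g:=\chi_j\psi$, we use the decomposition
\begin{align}
(ag)(\xi+\eta)-(ag)(\xi) = a(\xi+\eta)\bigl(g(\xi+\eta)-g(\xi)\bigr) + \bigl(a(\xi+\eta)-a(\xi)\bigr)g(\xi).
\end{align}
The first piece is bounded in $L^2$ by $\n{a}_{L^\infty}$ times the difference norm of $g$, and $\n{a}_{L^\infty}\le 3$, so it is controlled by the $X^q$ shell term of $\psi$.

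For the second piece, $a$ is smooth and compactly supported in the annulus $|\xi|\simeq 2^j$, with $\n{\nabla a}_{L^\infty}\lesssim 2^{-j}$ by homogeneity, since $a(\xi)=a_0(2^{-j}\xi)$. Hence
\begin{align}
|a(\xi+\eta)-a(\xi)|\le \min\{6,\,C2^{-j}|\eta|\}\le C\min\{1,2^{-j}|\eta|\}\le C(\min\{1,2^{-j}|\eta|\})^{1/2}.
\end{align}
The second piece is therefore bounded by $C(\min\{1,2^{-j}|\eta|\})^{1/2}\n{\chi_j\psi}_{L^2}$. Dividing by the weight, each shell quantity of $\widehat{\mathcal U[\psi]}$ at level $j$ is at most $C$ times the shell quantity of $\psi$ at level $j$.

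The fattened cutoff is not strictly needed here, since $g$ already carries $\chi_j$. The only mild point is that $m$ is singular at the origin, but $\chi_j$ localizes away from it. We apply the argument componentwise to $\xi^\perp/|\xi|$ and sum in $\ell^q$, which gives the constant $C$ independent of $q$ and $\psi$. I expect no real obstacle; the key step is the Lipschitz bound for $a$ at scale $2^{-j}$, which dominates the $1/2$-Hölder weight.
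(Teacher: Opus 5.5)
Your proposal is correct and is essentially the paper's proof. The paper uses the same product-rule splitting of $(\chi_j\widehat{\mathcal{U}[\psi]})(\xi+\eta)-(\chi_j\widehat{\mathcal{U}[\psi]})(\xi)$, bounds the first piece using $|m|=1$, and bounds the second via $|m(\xi+\eta)-m(\xi)|\leq C\min\{1,2^{-j}|\eta|\}$ on $\supp\chi_j$ together with $\min\{1,x\}\leq(\min\{1,x\})^{1/2}$. The only cosmetic difference is that the paper works with $m$ itself and proves the Lipschitz bound by hand for $|\eta|\leq 2^{j-1}$, using the trivial bound $2$ otherwise; your fattened cutoff $\widetilde\chi_j m$ makes the symbol globally Lipschitz at scale $2^{-j}$ and so avoids that case split.
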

\begin{proof}
    Since we have
    \begin{align}
        \sp{
        \sum_{j \in \Z}
        \n{\chi_j \widehat{\mathcal{U}[\psi]}}_{L^2}^q
        }^{\frac{1}{q}}
        =
        \sp{
        \sum_{j \in \Z}
        \n{\chi_j \psi}_{L^2}^q
        }^{\frac{1}{q}},
    \end{align}
    it suffices to focus on the estimate of the difference.
    As we see that 
    \begin{align}
        &
        \sp{\chi_j\widehat{\mathcal{U}[\psi]}}(\xi+\eta)
        -
        \sp{\chi_j\widehat{\mathcal{U}[\psi]}}(\xi)
        \\
        &\quad
        =
        -i\frac{(\xi+\eta)^\perp}{|\xi+\eta|}\chi_j(\xi+\eta)\psi(\xi+\eta)
        +i\frac{\xi^\perp}{|\xi|}\chi_j(\xi)\psi(\xi)\\
        &\quad 
        =
        -i\frac{(\xi+\eta)^\perp}{|\xi+\eta|}\sp{\chi_j(\xi+\eta)\psi(\xi+\eta)-\chi_j(\xi)\psi(\xi)}
        -i\sp{\frac{(\xi+\eta)^\perp}{|\xi+\eta|}-\frac{\xi^\perp}{|\xi|}}\chi_j(\xi)\psi(\xi)
    \end{align}
    and 
    \begin{align}
        \abso{\frac{\xi+\eta}{|\xi+\eta|}-\frac{\xi}{|\xi|}} 
        &
        =
        \abso{\int_0^1 \left.(\eta \cdot \nabla_\zeta)\frac{\zeta}{|\zeta|} \right|_{\zeta=\xi+\theta\eta}\, d\theta}
        \\
        &
        \leq 
        \int_0^1\frac{|\eta|}{|\xi+\theta\eta|}\, d\theta
        \leq \frac{|\eta|}{|\xi|-|\eta|}
        \leq C2^{-j}|\eta|
    \end{align}
    for $|\eta| \leq 2^{j-1}$, it follows that 
    \begin{align}
        \abso{\sp{\chi_j\widehat{\mathcal{U}[\psi]}}(\xi+\eta)
        -
        \sp{\chi_j\widehat{\mathcal{U}[\psi]}}(\xi)}
        \leq{}& 
        |(\chi_j\psi)(\xi+\eta) - (\chi_j\psi)(\xi)|
        \\
        &
        +
        C\sp{\min\{1,2^{-j}|\eta|\}}
        |(\chi_j\psi)(\xi)|.
    \end{align}
    Taking $L^2$-norm with respect to $\xi \in \R^2$, we have 
    \begin{align}
        \frac{\n{(\chi_j\widehat{\mathcal{U}[\psi]})(\cdot+\eta)-(\chi_j\widehat{\mathcal{U}[\psi]})(\cdot)}_{L^2}}
        {\sp{\min\{1,2^{-j}|\eta|\}}^{1/2}}
        \leq 
        \frac{\n{(\chi_j\psi)(\cdot+\eta) - (\chi_j\psi)(\cdot)}_{L^2}}{\sp{\min\{1,2^{-j}|\eta|\}}^{1/2}}
        +
        C\n{\chi_j\psi}_{L^2}.
    \end{align}
    We take $\ell^q$-norm to complete the proof.
\end{proof}
\begin{lemm}
    For $1 \leq q \leq 2$, $\mathcal{W}$ is a bounded linear operator from $\ell^q(\Z)$ to $X^q_{\rm sym}$.
\end{lemm}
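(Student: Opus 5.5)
The plan is to prove three things in turn: that $\mathcal{W}p$ lies in $L^2_{\rm sym}$, that it is controlled in $X^q$ by $\n{p}_{\ell^q}$, and that it is linear. Linearity is obvious from the definition. For the symmetry, each $W_j$ belongs to $L^2_{\rm sym}$, as already noted. Indeed, the rotation $(\xi_1,\xi_2)\mapsto(-\xi_2,\xi_1)$ shifts $\theta_\xi$ by $\pi/2$, so $\cos(2\theta_\xi)$ changes sign. The reflection $(\xi_1,\xi_2)\mapsto(-\xi_1,\xi_2)$ sends $\theta_\xi$ to $\pi-\theta_\xi$, so $\cos(2\theta_\xi)$ is unchanged. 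The radial factors $\chi_j$ and $|\xi|^{-1}$ are invariant under both maps, and the sum defining $\mathcal{W}p$ is locally finite. Hence $\mathcal{W}p$ satisfies both symmetry relations pointwise. The remaining membership in $L^2$ will follow from the norm bound below, since $\n{\psi}_{L^2}^2\le C\sum_k\n{\chi_k\psi}_{L^2}^2\le C\n{\psi}_{X^q}^2$ for $q\le 2$ (using $\sum_k\chi_k^2\ge c>0$ and $\ell^q\subset\ell^2$).

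For the norm, we localize. Since $\supp\chi\subset[2^{-3/4},2^{3/4}]$, we have $\chi_k\chi_j\ne0$ only for $|j-k|\le1$, so
\begin{align}
  \chi_k\,\mathcal{W}p=\sum_{|j-k|\le 1}p_j\,\chi_kW_j .
\end{align}
By the scaling $W_j(\xi)=2^{-j}W_0(2^{-j}\xi)$, the function $\chi_kW_j$ is the $L^2$-normalized dilate $2^{-k}G_{j-k}(2^{-k}\cdot)$ of one of three fixed functions $G_m:=\chi_0W_m$, $m\in\{-1,0,1\}$. Each $G_m$ is smooth and compactly supported in the annulus $\{2^{-3/4}\le|\xi|\le2^{3/4}\}$, so it is bounded there.

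This gives $\n{\chi_kW_j}_{L^2}=\n{G_{j-k}}_{L^2}\le C$. For the difference quotient, take $\eta\ne0$. If $2^{-k}|\eta|\ge1$, use the triangle inequality to bound the difference by $2\n{\chi_kW_j}_{L^2}\le C$. If $2^{-k}|\eta|<1$, use the mean value theorem in $L^2$:
\begin{align}
  \n{g(\cdot+\eta)-g}_{L^2}\le|\eta|\,\n{\nabla g}_{L^2},\qquad g=\chi_kW_j .
\end{align}
Scaling gives $\n{\nabla(\chi_kW_j)}_{L^2}=2^{-k}\n{\nabla G_{j-k}}_{L^2}$, so the difference is at most $C2^{-k}|\eta|\le C(2^{-k}|\eta|)^{1/2}$. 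In both cases the quotient in the definition of $\n{\cdot}_{X^q}$ is at most $C$, uniformly in $k$, $j$ and $\eta$.

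Summing over the at most three admissible $j$, the $k$-th term in the $X^q$ norm is bounded by $C\sum_{|j-k|\le1}|p_j|$. Taking the $\ell^q$ norm in $k$ and using that shifts are isometries on $\ell^q$ yields
\begin{align}
  \n{\mathcal{W}p}_{X^q}\le 3C\n{p}_{\ell^q}.
\end{align}
The only point needing care is the uniformity of the gradient bound in $j$ and $k$, and this is handled by reducing to the three fixed profiles $G_m$ via scaling. I therefore see no real obstacle; the argument is routine.
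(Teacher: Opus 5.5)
Your proposal is correct and follows essentially the same route as the paper: the localization $\chi_k\mathcal{W}p=\sum_{|j-k|\le 1}p_j\chi_kW_j$, scaling to control $\n{\chi_kW_j}_{L^2}$ and $2^k\n{\nabla(\chi_kW_j)}_{L^2}$ uniformly, and the bound $\min\{2\n{g}_{L^2},|\eta|\n{\nabla g}_{L^2}\}\le C(\min\{1,2^{-k}|\eta|\})^{1/2}$ for the difference quotient. You also spell out the symmetry and $L^2$-membership checks (via $\sum_k\chi_k^2\ge c>0$ and $\ell^q\subset\ell^2$), which the paper leaves implicit.
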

\begin{proof}
    For $p=\{p_j\}_{j \in \Z} \in \ell^q(\Z)$, we see by the almost orthogonality of $\{\chi_j\}_{j \in \Z}$ that 
    \begin{align}
        \chi_j(\xi)(\mathcal{W}p)(\xi) = \sum_{|j'-j| \leq 1} \chi_j(\xi)W_{j'}(\xi)p_{j'}.
    \end{align}
    Thus, we have 
    \begin{align}
        \sp{\sum_{j \in \Z}\n{\chi_j\mathcal{W}p}_{L^2}^q}^{\frac{1}{q}} 
        \leq 
        C
        \sp{\sum_{j \in \Z}\n{W_j}_{L^2}^q|p_j|^q}^{\frac{1}{q}}
        =
        C\n{W_0}_{L^2}\n{p}_{\ell^q}.
    \end{align}
    For the estimate of the difference, we see that 
    \begin{align}
        &
        \n{(\chi_j\mathcal{W}p)(\cdot+\eta) - (\chi_j\mathcal{W}p)(\cdot)}_{L^2}
        \leq 
        \sum_{|j'-j| \leq 1} \n{(\chi_{j}W_{j'})(\cdot+\eta) - (\chi_{j}W_{j'})(\cdot)}_{L^2}|p_{j'}|
        \\
        &\quad 
        \leq 
        \sum_{|j'-j| \leq 1} \min
        \Mp{
        2\n{\chi_{j}W_{j'}}_{L^2},
        |\eta|
        \n{\nabla(\chi_{j}W_{j'})}_{L^2}
        }|p_{j'}|
        \\
        &\quad
        \leq 
        C\sp{\min \Mp{1, 2^{-j}|\eta|}}^{\frac{1}{2}}
        \sum_{|j'-j| \leq 1} 
        |p_{j'}|,
    \end{align}
    which implies 
    \begin{align}
        \Mp{
        \sum_{j \in \Z}
        \sp{
        \sup_{\eta \neq 0}
        \frac{\n{\chi_j\mathcal{W}p(\cdot+\eta) - \chi_j\mathcal{W}p(\cdot)}_{L^2}}{\sp{\min \Mp{1, 2^{-j}|\eta|}}^{\frac{1}{2}}}
        }^q
        }^{\frac{1}{q}}
        \leq C\n{p}_{\ell^q}.
    \end{align}
    This completes the proof.
\end{proof}
\subsection{A simplified model on sequence spaces}
Let $N \in \N$ and define two sequences $\omega_N=\{\omega_{N,j}\}_{j \in \Z}$ and $\nu_N=\{\nu_{N,j}\}_{j \in \Z}$ by 
\begin{align}
    \omega_{N,j}:= \frac{N}{N+|j|},
    \qquad
    \nu_{N,j}:= \frac{|j|-|j-1|}{N+|j-1|}.
\end{align}
We then easily see by $\nu_{N,j}\omega_{N,j}=\omega_{N,j-1}-\omega_{N,j}$ that 
\begin{align}
    \sum_{j \in \Z} \nu_{N,j}\omega_{N,j} = 0,
    \qquad
    \sum_{j > k}\nu_{N,j}\omega_{N,j} = \omega_{N,k} \quad (k \in \Z),
\end{align}
and both series above converges absolutely.
For the estimate of $\omega_N$, we have 
\begin{align}\label{}
    \n{\omega_N}_{\ell^q} 
    \simeq
    \Mp{\int_{\R} \sp{\frac{N}{N+|t|}}^q\, dt}^{\frac{1}{q}} 
    = 
    \begin{cases}
        \sp{\frac{2}{q-1}}^{\frac{1}{q}} N^{\frac{1}{q}} & (q>1),
        \\
        \infty & (q=1).
    \end{cases}
\end{align}
For $1 < q <\infty$, we define 
\begin{align}
    \ell_{\nu_N}^q(\Z)
    :=
    \Mp{p=\{p_j\}_{j \in \Z} \in \ell^q(\Z)\ ;\ \sum_{j \in \Z} \nu_{N,j}p_j = 0 }
\end{align}
and consider a discrete operator $H_N:\ell_{\nu_N}^q(\Z) \to \R^{\Z}$ defined by
\begin{align}
    (H_Np)_k:=\sum_{j>k}\nu_{N,j}p_j, 
    \qquad 
    k \in \Z,\quad p=\{p_j\}_{j \in \Z} \in \ell^q_{\nu_N}(\Z).
\end{align}
Note that the series defining $(H_Np)_k$ converges absolutely since $\nu_N \in \ell^{q'}(\Z)$.
We may show that $H_N$ is a bounded linear operator from $\ell_{\nu_N}^q(\Z)$ to $\ell^q(\Z)$.
\begin{lemm}\label{lemm:H_N}
    For $1 < q < \infty$, there exists a positive constant $C=C(q)$ such that 
    $\| H_Np \|_{\ell^q} \leq C \| p \|_{\ell^q}$
    for all $p \in \ell^q_{\nu_N}(\Z)$ and $N \in \N$.
\end{lemm}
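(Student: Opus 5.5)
The plan is to use the cancellation condition $\sum_j \nu_{N,j}p_j=0$ to rewrite $H_N$ as a tail sum on each half-line. On each half-line $H_N$ is then dominated, uniformly in $N$, by the dual discrete Hardy operator $T a_k:=\sum_{j\geq k} a_j/j$ (for $k \geq 1$). That operator is bounded on $\ell^q$ for $1<q<\infty$.

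First I record the explicit form of $\nu_N$. For $j\geq 1$ we have $|j|-|j-1|=1$, so $\nu_{N,j}=1/(N+j-1)$. For $j\leq 0$ we have $|j|-|j-1|=-1$, so $\nu_{N,j}=-1/(N+1-j)$. In both cases, since $N\geq 1$,
\begin{align}
    |\nu_{N,j}| \leq \frac{1}{\max\{1,|j|\}} ,
\end{align}
with a bound independent of $N$. This uniform bound is what gives the $N$-independence of the constant.

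Next I split according to the sign of $k$.
\begin{itemize}
\item For $k\geq 0$, only indices $j\geq k+1\geq 1$ occur, so $|(H_Np)_k| \leq \sum_{j\geq k+1}|p_j|/j = (T|p|)_{k+1}$.
\item For $k<0$, the tail $\sum_{j>k}$ contains infinitely many indices of both signs and cannot be estimated directly. Here I use $p\in\ell^q_{\nu_N}(\Z)$ to write $(H_Np)_k=-\sum_{j\leq k}\nu_{N,j}p_j$. Then $|(H_Np)_k| \leq \sum_{j\leq k}|p_j|/(1+|j|) \leq \sum_{m\geq |k|} |p_{-m}|/m$, which is $T$ applied to the reflected sequence $\{|p_{-m}|\}_{m\geq1}$. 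Both series converge absolutely since $\nu_N\in\ell^{q'}$.
\end{itemize}
The use of the constraint for $k<0$ is the only genuinely structural point. Without it, $H_N$ would be an unbounded operator: its value at $k\to-\infty$ tends to $\sum_j \nu_{N,j}p_j$, which need not vanish.

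Finally I prove $\|Ta\|_{\ell^q(\N)}\leq C(q)\|a\|_{\ell^q(\N)}$ by duality. For $b\in\ell^{q'}$ we have $\sum_k b_k (Ta)_k=\sum_j a_j \cdot \frac1j\sum_{k\leq j} b_k$. The averaging operator $b\mapsto \{\frac1j\sum_{k\leq j}b_k\}_{j\ge1}$ is bounded on $\ell^{q'}$ with norm $q$ by the discrete Hardy inequality, valid since $q'>1$, i.e.\ $q<\infty$. Alternatively one can cite the Hardy–Littlewood–P\'olya form of the dual inequality directly; its constant is also $q$.

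Summing the $k\geq0$ and $k<0$ contributions in $\ell^q$ gives $\|H_Np\|_{\ell^q}\leq 2q\|p\|_{\ell^q}$ for all $N\in\N$. I expect no real obstacle beyond correctly organizing the $k<0$ case via the constraint and checking the endpoint indices $j=0,1$, where the bound $|\nu_{N,j}|\leq 1/\max\{1,|j|\}$ must still hold.
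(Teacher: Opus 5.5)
Your proof is correct and follows the paper's argument. You split by the sign of $k$ and bound the $k\geq 0$ tail directly by the Copson operator $\sum_{\ell>k}|p_\ell|/\ell$; for $k\leq -1$ you use $\sum_j\nu_{N,j}p_j=0$ to rewrite the tail as the head sum $\sum_{j\leq k}p_j/(N-j+1)$, which the reflected Copson operator dominates uniformly in $N$. The only difference is cosmetic: you derive the Copson inequality by duality from the averaging Hardy inequality and track the constant $2q$, whereas the paper cites Hardy--Littlewood--P\'olya for it.
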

To prove this, we recall the Hardy--Copson inequality.
\begin{lemm}\label{lemm:Hardy}
    For $1 < q <\infty$, there exists a positive constant $C=C(q)$ such that
    \begin{align}
        &
        \n{\Mp{\frac{1}{k}\sum_{\ell=1}^k p_\ell }_{k \in \N} }_{\ell^q}
        \leq 
        C
        \n{p}_{\ell^q},
        \qquad 
        \n{ \Mp{\sum_{\ell=k}^\infty \frac{p_\ell}{\ell} }_{k \in \N} }_{\ell^q}
        \leq 
        C
        \n{p}_{\ell^q}
    \end{align}
    for all $p=\{p_\ell\}_{\ell \in \N} \in \ell^q(\N)$.
\end{lemm}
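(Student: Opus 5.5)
We prove the two inequalities separately: the first by a weighted H\"older argument, and the second by duality from the first with $q$ replaced by $q'$. Both sides only grow if $p_\ell$ is replaced by $|p_\ell|$, so we may assume $p_\ell \geq 0$ throughout. This also justifies all the rearrangements of sums below by Tonelli's theorem.

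\emph{First inequality.} Fix an exponent $a$ with $0<a<1/q'$, where $1/q+1/q'=1$; for definiteness take $a=1/(2q')$. For each $k \in \N$, H\"older's inequality gives
\begin{align}
    \sp{\sum_{\ell=1}^k p_\ell}^q
    \leq
    \sp{\sum_{\ell=1}^k \ell^{aq}p_\ell^q}
    \sp{\sum_{\ell=1}^k \ell^{-aq'}}^{q/q'}.
\end{align}
Since $aq'<1$, we have $\sum_{\ell\leq k}\ell^{-aq'} \leq C k^{1-aq'}$. Using $q/q'=q-1$, this yields
\begin{align}
    \sp{\frac{1}{k}\sum_{\ell=1}^k p_\ell}^q
    \leq
    C k^{-q}k^{(1-aq')(q-1)}\sum_{\ell=1}^k \ell^{aq}p_\ell^q
    = C k^{-1-aq}\sum_{\ell=1}^k \ell^{aq}p_\ell^q,
\end{align}
because $aq'(q-1)=aq$. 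We sum over $k\in\N$ and exchange the order of summation. The tail bound $\sum_{k\geq \ell}k^{-1-aq}\leq C\ell^{-aq}$, valid since $aq>0$, then cancels the weight $\ell^{aq}$ exactly, and we obtain $\sum_k (\frac1k\sum_{\ell\le k}p_\ell)^q \leq C\sum_\ell p_\ell^q$.

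\emph{Second inequality.} We argue by duality, using that $1<q'<\infty$. For nonnegative $r=\{r_k\}\in\ell^{q'}(\N)$, Tonelli's theorem lets us swap the sums:
\begin{align}
    \sum_{k=1}^\infty r_k\sum_{\ell=k}^\infty \frac{p_\ell}{\ell}
    =\sum_{\ell=1}^\infty p_\ell\,\frac{1}{\ell}\sum_{k=1}^\ell r_k .
\end{align}
By H\"older's inequality and the first inequality applied with exponent $q'$, the right-hand side is at most $C\n{p}_{\ell^q}\n{r}_{\ell^{q'}}$. In particular, the inner series $\sum_{\ell\geq k}p_\ell/\ell$ is finite for every $k$. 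Taking the supremum over $r\geq0$ with $\n{r}_{\ell^{q'}}\leq1$ gives the second bound.

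We do not expect a real obstacle here. The only delicate point is choosing the weight exponent $a$ strictly between $0$ and $1/q'$, so that both the partial-sum bound and the tail bound hold; this choice is also why the constant $C$ depends on $q$ and blows up as $q\to1$ or $q\to\infty$.
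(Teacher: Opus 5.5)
Your proof is correct. It differs from the paper mainly in that the paper gives no argument at all: it simply refers to Hardy--Littlewood--P\'olya, Theorem 326, i.e.\ the classical Hardy inequality, whose sharp constants are $q'$ for the averaging operator and $q$ for its dual.

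You instead give a self-contained proof in two steps.
\begin{enumerate}
\item For the averaging bound you use a weighted H\"older inequality with weight $\ell^{a}$, $0<a<1/q'$, which is essentially Schur's test. The exponent bookkeeping $aq'(q-1)=aq$, the partial-sum bound $\sum_{\ell\le k}\ell^{-aq'}\le Ck^{1-aq'}$, and the tail bound $\sum_{k\ge\ell}k^{-1-aq}\le C\ell^{-aq}$ are all correct.
\item For the tail bound you argue by duality. The operator $p\mapsto\{\sum_{\ell\ge k}p_\ell/\ell\}_k$ is the transpose of the averaging operator, so the first inequality at exponent $q'\in(1,\infty)$ yields the second.
\end{enumerate}

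The citation gives the optimal constants. The paper never uses them, since Lemmas~\ref{lemm:H_N} and \ref{lemm:R_N} only need some $C(q)$. Your route is self-contained and shows that only one of the two inequalities needs a genuine proof.

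The one step you leave implicit is the final supremum. The sequence $b_k=\sum_{\ell\ge k}p_\ell/\ell$ is nonnegative but not known a priori to lie in $\ell^q$. Test against the normalized truncation $r_k=b_k^{q-1}/\n{b\,\bb{1}_{\{k\le M\}}}_{\ell^q}^{q-1}$ for $k\le M$ and $r_k=0$ otherwise. This gives $\n{b\,\bb{1}_{\{k\le M\}}}_{\ell^q}\le C\n{p}_{\ell^q}$ uniformly in $M$, and then you let $M\to\infty$.
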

See \cite[Theorem 326, p.~239]{Hardy-Littlewood-Polya-1952} for the proof of Lemma \ref{lemm:Hardy}.
\begin{proof}[Proof of Lemma \ref{lemm:H_N}]
We first consider the case of $k\geq 0$.
Since
\begin{align}
    \abso{(H_Np)_k}
    \leq
    \sum_{j>k}
    \frac{|p_j|}{N+j-1}
    \leq 
    \sum_{\ell=k+1}^\infty 
    \frac{|p_\ell|}{\ell},
\end{align}
Lemma \ref{lemm:Hardy} yields
\begin{align}
    \sp{\sum_{k=0}^\infty |(H_Np)_k|^q}^{\frac{1}{q}}
    &\leq 
    \Mp{\sum_{k=0}^\infty
    \sp{\sum_{\ell=k+1}^\infty 
    \frac{|p_{\ell}|}{\ell}}^q
    }^{\frac{1}{q}}
    \leq 
    C\sp{\sum_{j=0}^\infty|p_j|^q}^{\frac{1}{q}}.
\end{align}
For the case of $k \leq -1$, we see by $\sum_{j \in \Z}\nu_{N,j}p_j=0$ that 
\begin{align}
    |(H_Np)_k| = \abso{\sum_{j \leq k}\frac{p_j}{N-j+1}}
    \leq 
    \sum_{\ell=-k}^{\infty} \frac{|p_{-\ell}|}{\ell}.
\end{align}
Taking $\ell^q$-norm with respect to $k$ and using Lemma \ref{lemm:Hardy}, we have 
\begin{align}
    \sp{\sum_{k \leq -1} |(H_Np)_k|^q}^{\frac{1}{q}}
    \leq \Mp{\sum_{-k=1}^{\infty}\sp{\sum_{\ell=-k}^{\infty} \frac{|p_{-\ell}|}{\ell}}^q}^{\frac{1}{q}}
    \leq C \sp{\sum_{\ell=1}^{\infty}|p_{-\ell}|^q}^{\frac{1}{q}},
\end{align}
which completes the proof.
\end{proof}
Let $L_N:=\Id - H_N$.
By Lemma \ref{lemm:H_N}, $L_N$ is bounded from $\ell_{\nu_N}^q(\Z)$ to $\ell^q(\Z)$.
\begin{lemm}\label{lemm:Ker_L_N}
    It holds $\operatorname{ker}L_N=\operatorname{span}\{\omega_N\}$.
\end{lemm}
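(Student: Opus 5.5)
The plan is to verify the two inclusions directly, using only the identity $\nu_{N,j}\omega_{N,j}=\omega_{N,j-1}-\omega_{N,j}$ and the telescoping relations for $\omega_N$ recorded just before Lemma \ref{lemm:H_N}. The converse inclusion reduces to a first-order recursion that has a one-dimensional solution space.

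\emph{Step 1: $\omega_N\in\operatorname{ker}L_N$.} Since $q>1$, the computation of $\n{\omega_N}_{\ell^q}$ above shows $\omega_N\in\ell^q(\Z)$. Moreover $\sum_{j\in\Z}\nu_{N,j}\omega_{N,j}=0$, so $\omega_N\in\ell^q_{\nu_N}(\Z)$ lies in the domain of $L_N$. The identity $\sum_{j>k}\nu_{N,j}\omega_{N,j}=\omega_{N,k}$ says exactly that $H_N\omega_N=\omega_N$, that is, $L_N\omega_N=0$. By linearity, $\operatorname{span}\{\omega_N\}\subset\operatorname{ker}L_N$.

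\emph{Step 2: the converse inclusion.} Let $p\in\ell^q_{\nu_N}(\Z)$ satisfy $L_Np=0$, so that $p_k=\sum_{j>k}\nu_{N,j}p_j$ for every $k\in\Z$. The series converge absolutely, so subtracting the identities at $k-1$ and at $k$ gives
\begin{align}
    p_{k-1}-p_k=\nu_{N,k}p_k,
    \qquad\text{that is,}\qquad
    p_{k-1}=(1+\nu_{N,k})\,p_k
    \qquad (k\in\Z).
\end{align}
The sequence $\omega_N$ satisfies the same recursion, $\omega_{N,k-1}=(1+\nu_{N,k})\omega_{N,k}$. Since $\omega_{N,k}=N/(N+|k|)>0$ for all $k$, we have $1+\nu_{N,k}=\omega_{N,k-1}/\omega_{N,k}>0$, and in particular this factor never vanishes. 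Dividing the two recursions therefore gives $p_{k-1}/\omega_{N,k-1}=p_k/\omega_{N,k}$ for every $k$. Hence the ratio $c:=p_k/\omega_{N,k}$ is independent of $k$, and $p=c\,\omega_N$.

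The only point needing care is the division in Step 2, which is where one must know that $1+\nu_{N,k}\neq0$ and $\omega_{N,k}\neq0$. Both follow at once from the explicit formula for $\omega_{N,k}$, so I do not expect a genuine obstacle. The constraint $\sum_{j}\nu_{N,j}p_j=0$ defining $\ell^q_{\nu_N}(\Z)$ is not needed in Step 2 beyond guaranteeing that $H_Np$ is defined. Every multiple of $\omega_N$ already satisfies this constraint, so no further compatibility condition arises and $\operatorname{ker}L_N=\operatorname{span}\{\omega_N\}$.
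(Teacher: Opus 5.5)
Your proof is correct and matches the paper's argument: $H_N\omega_N=\omega_N$ gives one inclusion, and subtracting $H_Np=p$ at consecutive indices gives $p_{k-1}=(1+\nu_{N,k})p_k$, which with $1+\nu_{N,k}=\omega_{N,k-1}/\omega_{N,k}$ forces $p_k/\omega_{N,k}$ to be constant. You also check explicitly that $\omega_N\in\ell^q_{\nu_N}(\Z)$ and that the factors you divide by are nonzero, details the paper leaves implicit.
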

\begin{proof}
    Since $H_N\omega_N=\omega_N$, it suffices to show $\operatorname{ker}L_N \subset \operatorname{span}\{\omega_N\}$.
    Let $p \in \ell_{\nu_N}^q(\mathbb Z)$ satisfy $L_Np=0$, that is $H_Np=p$.
    Then, since $\nu_{N,j}p_j=(H_Np)_{j-1}-(H_Np)_j=p_{j-1} - p_j$, we have $(1+\nu_{N,j})p_j=p_{j-1}$.
    As $1+\nu_{N,j}=(N+|j|)/(N+|j-1|)=\omega_{N,j-1}/\omega_{N,j}$, it holds $p_j/\omega_{N,j} = p_{j-1}/\omega_{N,j-1}$, which implies $p \in \operatorname{span}\{\omega_N\}$.
    This completes the proof.
\end{proof}
Next, we aim to find an appropriate right inverse of $L_N$.
For given $y=\{y_j\}_{j\in \Z} \in \ell^q(\Z)$, we assume that $p=\{p_j\}_{j \in \Z} \in \ell_{\nu_N}^q(\Z)$ satisfies $L_Np=y$. 
Then, since 
\begin{align}
    p_j-\sum_{\ell>j}\nu_{N,\ell}p_\ell = y_j, \qquad j \in \Z,
\end{align}
we have $p_j - p_{j-1} + \nu_{N,j}p_j = y_j - y_{j-1}$, that is $(1+\nu_{N,j})p_j - y_j = p_{j-1} - y_{j-1}$.
As $1+\nu_{N,j}=(N+|j|)/(N+|j-1|)$, we see that 
\begin{align}
    (N+|j|)(p_j-y_j)=(N+|j-1|)(p_{j-1}-y_{j-1})-(|j|-|j-1|)y_j,
\end{align}
which yields
\begin{align}
    (N+|j|)(p_j-y_j)
    = 
    \begin{cases}
        N(p_0-y_0)-\displaystyle\sum_{\ell=1}^jy_\ell, & (j \geq 1), \\
        N(p_0-y_0)-\displaystyle\sum_{\ell=j+1}^0y_\ell, & (j \leq -1).
    \end{cases}
\end{align}
Hence, combining the above calculation with Lemma \ref{lemm:Hardy}, we obtain the following lemma.
\begin{lemm}\label{lemm:R_N}
    Let $1<q<\infty$.
    Let $R_N:\ell^q(\Z) \to \R^{\Z}$ be defined as
    \begin{align}
        (R_Ny)_j
        :=
        \begin{cases}
        y_j-\dfrac{1}{N+j}\displaystyle\sum_{\ell=1}^jy_\ell, & (j \geq 1), \\
        y_0, & (j=0), \\
        y_j-\dfrac{1}{N-j}\displaystyle\sum_{\ell=j+1}^0y_\ell, & (j \leq -1).
    \end{cases}
    \end{align}
    Then, there exists a positive constant $C=C(q)$ such that 
    \begin{align}
        \n{R_Ny}_{\ell^q} \leq C \n{y}_{\ell^q}, 
        \qquad
        R_Ny \in \ell_{\nu_N}^q(\Z), 
        \qquad
        L_NR_Ny=y
    \end{align}
    for all $y \in \ell^q(\Z)$.
\end{lemm}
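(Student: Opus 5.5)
We verify the three claims of Lemma \ref{lemm:R_N} directly from the explicit formula, following the derivation that precedes the statement. Throughout, write $S_j:=\sum_{\ell=1}^j y_\ell$ for $j\geq 1$ and $T_j:=\sum_{\ell=j+1}^0 y_\ell$ for $j\leq -1$.

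\emph{Boundedness.} Split $R_Ny=y-z$, where $z_0=0$, $z_j=S_j/(N+j)$ for $j\geq1$ and $z_j=T_j/(N-j)$ for $j\leq-1$. For $j\geq 1$ we have $|z_j|\leq \frac1j\sum_{\ell=1}^j|y_\ell|$, so the first (Hardy) inequality in Lemma \ref{lemm:Hardy} gives $\|\{z_j\}_{j\geq1}\|_{\ell^q}\leq C\|y\|_{\ell^q}$. For $j\leq -1$, put $m=-j\geq1$. Then $T_j=\sum_{\ell'=0}^{m-1}y_{-\ell'}$ and $|z_j|\leq \frac1m\sum_{\ell'=0}^{m-1}|y_{-\ell'}|$. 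After a shift of index, this is again controlled by Lemma \ref{lemm:Hardy}; the term $|y_0|/m$ is harmless since $\{1/m\}\in\ell^q$ for $q>1$. The constant is independent of $N$ because we only used $N+|j|\geq |j|$.

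\emph{$R_Ny\in\ell^q_{\nu_N}$.} Set $p:=R_Ny$. By construction, $(N+|j|)(p_j-y_j)$ equals $-S_j$ for $j\geq1$, $-T_j$ for $j\leq-1$, and $0$ for $j=0$, which is the recursion derived above with $p_0=y_0$. To evaluate $\sum_j\nu_{N,j}p_j$, use $\nu_{N,j}=\frac{|j|-|j-1|}{N+|j-1|}$, which equals $\frac{1}{N+j-1}$ for $j\geq1$ and $-\frac{1}{N-j+1}$ for $j\leq 0$. Rather than summing directly, we check the recursion: with $P_j:=(N+|j|)p_j$ and $Y_j:=(N+|j|)y_j$, the recursion is equivalent to
\[
(1+\nu_{N,j})p_j-y_j=p_{j-1}-y_{j-1}\quad\text{for all } j,
\]
by reversing the computation before the lemma. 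Summing the telescoping identity $p_j-p_{j-1}+\nu_{N,j}p_j=y_j-y_{j-1}$ over $-M<j\leq M$ gives
\[
\sum_{-M<j\leq M}\nu_{N,j}p_j=(y_M-p_M)-(y_{-M}-p_{-M}).
\]
Since $|p_j-y_j|=|z_j|\to0$ as $|j|\to\infty$ (by Hölder, $|S_j|\leq j^{1/q'}\|y\|_{\ell^q}$, so $|z_j|\lesssim j^{-1/q}$), the partial sums converge to $0$. The full series converges absolutely because $\nu_N\in\ell^{q'}$ and $p\in\ell^q$, so $\sum_j\nu_{N,j}p_j=0$.

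\emph{$L_NR_Ny=y$.} The same telescoping identity, summed over $k<j\leq M$ and letting $M\to\infty$, gives
\[
\sum_{j>k}\nu_{N,j}p_j=\lim_{M\to\infty}(y_M-p_M)-(y_k-p_k)=p_k-y_k.
\]
Hence $(H_Np)_k=p_k-y_k$, that is, $L_Np=y$.

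The main obstacle is the decay $p_j-y_j\to0$ at both ends. It follows from Hölder's inequality as noted, and it is exactly where $q<\infty$ is used.
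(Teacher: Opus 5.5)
Your proof is correct and follows the paper's route. The $\ell^q$ bound comes from the Hardy inequality of Lemma \ref{lemm:Hardy}, with $N$-independence from $N+|j|\geq|j|$, and the other two properties come from reversing the recursion derived just before the lemma. You also make explicit what the paper leaves implicit: the telescoping identity $\nu_{N,j}p_j=(y_j-p_j)-(y_{j-1}-p_{j-1})$ holds for every $j\in\Z$ (including $j=0,1$), and $p_j-y_j\to0$ as $|j|\to\infty$, which together give both $\sum_j\nu_{N,j}p_j=0$ and $(H_Np)_k=p_k-y_k$.
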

\section{Oseen solutions}\label{sec:Oseen}
In this section, we construct nontrivial solutions to the Oseen equations with some background flow.
In what follows, we fix the background flow by 
\begin{align}
    v_N:=\mathcal{U}[\phi_N],
    \qquad
    \phi_N(\xi):=-\sum_{j \in \Z}\nu_{N,j}V_j(\xi),
\end{align}
where the sequence $\nu_N:=\{\nu_{N,j}\}_{j \in \Z}$ is defined in the previous section.
\begin{lemm}\label{lemm:v_N}
    For $1 < q \leq 2$,
    there exists a positive constant $C=C(q)$ such that
    \begin{align}
        \n{v_N}_{\dB_{2,q}^0} \leq CN^{\frac{1}{q}-1},
        \qquad
        \phi_N \in \widetilde{X}^q_{\rm sym}
    \end{align}
    for all $N \in \N$.
\end{lemm}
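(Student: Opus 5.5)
The plan is to use the fact that the blocks $V_j$ have essentially disjoint Fourier supports. This reduces both claims to the $\ell^q$ size of the sequence $\nu_N$, exactly as in the lemma showing that $\mathcal W$ is bounded from $\ell^q(\Z)$ to $X^q_{\rm sym}$.

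\emph{Localization.} Since $\supp\rho\subset[2^{-1/8},2^{1/8}]$ and $\chi\equiv1$ on $[2^{-1/4},2^{1/4}]$, we have $\chi_j=1$ on $\supp\rho_j$. Together with $\sum_k\chi_k=1$ and $\chi_k\ge0$, this forces $\chi_k=0$ on $\supp\rho_j$ for every $k\neq j$. Hence
\begin{align}
    \chi_k\phi_N=-\nu_{N,k}V_k \qquad (k\in\Z),
\end{align}
and in particular the series defining $\phi_N$ has pairwise disjoint supports.

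\emph{The $X^q$ bound.} Since $\rho$ is supported away from the origin, $V_0$ is smooth and compactly supported. The scaling $V_k(\xi)=2^{-k}V_0(2^{-k}\xi)$ gives $\|V_k\|_{L^2}=\|V_0\|_{L^2}$ and $\|\nabla V_k\|_{L^2}=2^{-k}\|\nabla V_0\|_{L^2}$. Therefore
\begin{align}
    \|V_k(\cdot+\eta)-V_k\|_{L^2}\le \min\{2\|V_0\|_{L^2},\,2^{-k}|\eta|\,\|\nabla V_0\|_{L^2}\}\le C\bigl(\min\{1,2^{-k}|\eta|\}\bigr)^{1/2}.
\end{align}
Summing in $\ell^q$ yields $\|\phi_N\|_{X^q}\le C\|\nu_N\|_{\ell^q}$.

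\emph{Symmetry.} Each $V_j$ is real and lies in $\widetilde L^2_{\rm sym}$. The partial sums converge in $L^2$, because of the disjoint supports and because $\ell^q\subset\ell^2$ for $q\le2$. Since $\widetilde L^2_{\rm sym}$ is a closed subspace of $L^2$, the limit $\phi_N$ lies in it, and so $\phi_N\in\widetilde X^q_{\rm sym}$.

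\emph{The Besov bound.} By the note following the definition of $X^q$ and by the first elementary lemma,
\begin{align}
    \|v_N\|_{\dB_{2,q}^0}\le\|\widehat{v_N}\|_{X^q}=\|\widehat{\mathcal U[\phi_N]}\|_{X^q}\le C\|\phi_N\|_{X^q}\le C\|\nu_N\|_{\ell^q}.
\end{align}

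\emph{The sequence norm.} For $j\ge1$ we have $|\nu_{N,j}|=1/(N+j-1)$, and for $j\le0$ we have $|\nu_{N,j}|=1/(N+1-j)$. Hence
\begin{align}
    \|\nu_N\|_{\ell^q}^q\le 2\sum_{m\ge0}(N+m)^{-q}\le 2\Bigl(N^{-q}+\int_N^\infty t^{-q}\,dt\Bigr)\le C(q)N^{1-q},
\end{align}
where $q>1$ is used to make the integral finite. This gives $\|v_N\|_{\dB_{2,q}^0}\le CN^{1/q-1}$.

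None of these steps is a real obstacle. The only point needing care is the disjointness $\chi_k\rho_j=\delta_{kj}\rho_j$, which comes from the nested support conditions on $\rho$ and $\chi$ and removes all cross terms.
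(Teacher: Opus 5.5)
Your proof is correct and follows the same route as the paper: the localization $\chi_k\phi_N=-\nu_{N,k}V_k$, the scaling bounds on $V_k$ giving $\|\phi_N\|_{X^q}\le C\|\nu_N\|_{\ell^q}$, the first elementary lemma giving $\|v_N\|_{\dB_{2,q}^0}\le C\|\phi_N\|_{X^q}$, and the comparison of $\|\nu_N\|_{\ell^q}^q$ with $\int_{\R}(N+|t|)^{-q}\,dt$. You additionally supply details the paper leaves implicit, namely the disjointness $\chi_k\rho_j=\delta_{kj}\rho_j$, the explicit difference estimate for $V_k$, and the $L^2$-closedness argument for the symmetry of $\phi_N$.
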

\begin{proof}
    Since $V_j \in \widetilde{L}^2_{\rm sym}$, we see that $\phi_N \in \widetilde{L}^2_{\rm sym}$.
    For the Besov estimate of $v_N$, 
    it follows from $\chi_j\phi_N=-\nu_{N,j}V_j$ that 
    \begin{align}
        \n{v_N}_{\dB_{2,q}^0}
        &\leq 
        C \n{\phi_N}_{X^q}
        \leq
        C
        \n{V_0}_{H^1}\n{\nu_N}_{\ell^q}
        \\
        &\leq 
        C \sp{\int_{\R}\frac{1}{(N+|t|)^q}\, dt}^{\frac{1}{q}}
        = 
        C\sp{\frac{2}{q-1}}^{\frac{1}{q}}N^{\frac{1}{q}-1},
    \end{align}
    which completes the proof.
\end{proof}
Now, let us consider the Oseen equations
\begin{align}\label{eq:Oseen}
    -\Delta w + \mathbb P \sp{(v_N \cdot \nabla) w + (w \cdot \nabla) v_N} = 0.
\end{align}
The central goal of this section is to show the following proposition.
\begin{prop}\label{prop:Oseen}
For $1 < q \leq 2$, there exist positive constants $N_0=N_0(q) \in \N$ and $C=C(q)$ such that for every $N \in \N$ with $N \geq N_0$, 
\eqref{eq:Oseen} possesses a nontrivial solution $w_N = N^{-1}\mathcal{U}[\psi_N] \in \dB_{2,q}^0(\R^2)$ with some $\psi_N \in X^q_{\rm sym}$
such that 
\begin{align}
    \| w_N \|_{\dB_{2,q}^0} \leq CN^{\frac{1}{q}-1},
    \qquad
    C^{-1}N^{\frac{1}{q}} \leq \| \psi_N \|_{X^q} \leq CN^{\frac{1}{q}}.
\end{align}
\end{prop}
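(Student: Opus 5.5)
We look for $\psi_N=\mathcal W\omega_N+\widetilde\psi$, with the correction $\widetilde\psi\in X^q_{\rm sym}$ small compared with $\mathcal W\omega_N$. On the Fourier side, \eqref{eq:Oseen} for $w=\mathcal U[\psi]$ is the scalar equation $\psi+\mathcal O[\phi_N,\psi]=0$, with $\mathcal O$ the bilinear multiplier from the introduction. We rescale the unknown by $N^{-1}$ only at the end; the equation is linear in $\psi$, so this is harmless. By the two elementary lemmas and $\|\omega_N\|_{\ell^q}\simeq N^{1/q}$, we get $\|\mathcal W\omega_N\|_{X^q}\lesssim N^{1/q}$. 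The lower bound $\|\mathcal W\omega_N\|_{X^q}\gtrsim\|\chi_j\mathcal W\omega_N\|$-sum $\gtrsim N^{1/q}$ also holds, since $\chi_j\mathcal W\omega_N$ is essentially $\omega_{N,j}W_j$ up to neighbours and $\omega_{N,j}>0$ is slowly varying. Hence it suffices to produce $\widetilde\psi$ with $\|\widetilde\psi\|_{X^q}\le \tfrac12 C^{-1}N^{1/q}$. Then $\|w_N\|_{\dB^0_{2,q}}\le CN^{-1}\|\psi_N\|_{X^q}\lesssim N^{1/q-1}$.

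\textbf{Step 1: decomposition of $\mathcal O[\phi_N,\cdot]$.} For $\psi\in X^q_{\rm sym}$, split $\mathcal O[\phi_N,\psi]$ dyadically into two parts. The first is the high$\times$high$\to$low piece with the leading symbol $\sin(2(\theta_\eta-\theta_\xi))$; call it $\mathcal O^{\rm main}$. The second collects everything else, namely the high$\times$low and low$\times$high interactions, the $O(|\xi|/|\eta|)$ part of $\Gamma$, and the $|j-k|\lesssim 1$ interactions; call it $\mathcal R$.

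For the main part, I would prove the identity
\[
\mathcal O^{\rm main}[\phi_N,\mathcal Wp]=-\mathcal W(H_Np)+(\text{error}).
\]
The reason is that $\chi_k\mathcal O_1[V_j,W_j]=W_k$ for $j>k$, while $\mathcal O_2$ vanishes by the symmetry of $\widetilde L^2_{\rm sym}\times L^2_{\rm sym}$. This is where $\int\rho/r=4$ enters.

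For the remainder, I would show a bound of the form
\[
\|\mathcal R[\phi_N,\psi]\|_{X^q}\le C\|\nu_N\|_{\ell^{\infty}}^{\theta}\cdots\lesssim N^{-\delta}\|\psi\|_{X^q}
\]
for some $\delta>0$. The remainder terms carry either a factor $2^{-|j-k|}$ or are local in frequency. Combined with $\sup_j|\nu_{N,j}|\le 1/N$ and Young/Hölder in $\ell^q$, this gives a bound like $CN^{-1}\cdot(\text{log-free})\|\psi\|_{X^q}$. The $L^2$-Hölder part of the $X^q$ norm must be checked too: differentiating $|\xi|^{-1}\Gamma$ in $\xi$ costs $2^{-k}$, so it is controlled as in the proof of the first elementary lemma.

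\textbf{Step 2: fixed point.} Write $\psi=\mathcal Wp+\widetilde\psi$. One natural ansatz is to seek $\widetilde\psi=\mathcal W y+\zeta$ and solve the sequence part with the right inverse $R_N$ of Lemma \ref{lemm:R_N}. Simpler is the following. The error from Step 1 and $\mathcal R[\phi_N,\mathcal W\omega_N]$ are both $O(N^{-\delta})\,N^{1/q}$ in $X^q$. I then solve
\[
\widetilde\psi+\mathcal O[\phi_N,\widetilde\psi]=-\bigl(\mathcal W\omega_N+\mathcal O[\phi_N,\mathcal W\omega_N]\bigr)=:g,\qquad \|g\|_{X^q}\lesssim N^{1/q-\delta}.
\]
The difficulty is that $\mathcal O[\phi_N,\cdot]$ is \emph{not} small on $X^q$: its main part is $-H_N$, which is only bounded. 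So I would invert $\Id+\mathcal O[\phi_N,\cdot]$ in two pieces. The ``sequence'' piece $\mathcal W(L_N p)$ is handled by $R_N$, using the projection $\psi\mapsto\{\langle\psi,\chi_jW_j\rangle/\|W_j\|^2\}$ onto the $\cos 2\theta$ angular mode. The complementary piece, where $\mathcal O^{\rm main}$ vanishes or is perturbative, is handled by a Neumann series using the $N^{-\delta}$ smallness of $\mathcal R$. Concretely, the map $\widetilde\psi\mapsto\mathcal W R_N(\cdots)+(\text{orthogonal part})$ is a contraction for $N\ge N_0$, and Lemma \ref{lemm:R_N} keeps $R_Ny\in\ell^q_{\nu_N}$ so that $H_N$ is defined. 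The output satisfies $\|\widetilde\psi\|_{X^q}\lesssim N^{1/q-\delta}\ll N^{1/q}$, and preservation of $L^2_{\rm sym}$ follows from the symmetry of each building block.

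\textbf{Main obstacle.} The hardest step will be the remainder estimate $\|\mathcal R[\phi_N,\psi]\|_{X^q}\lesssim N^{-\delta}\|\psi\|_{X^q}$ uniformly in $q\in(1,2]$. Two issues arise. First, the high$\times$high$\to$low sum $\sum_{j>k}$ produces only a bounded factor unless the decay $|\xi|/|\eta|\sim 2^{k-j}$ is genuinely present. Second, the non-$\cos2\theta$ angular modes of $\psi$ must not feed back through $\mathcal O^{\rm main}$ with an $O(1)$ operator. For this I would first compute $\chi_k\mathcal O_1[V_j,\cdot]$ exactly on each angular mode. By the symmetry of $\widetilde L^2_{\rm sym}\times L^2_{\rm sym}$, only the pairing of $\sin4\theta$ against $\sin 2\theta$- and $\cos2\theta$-type modes survives. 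I expect this forces $\mathcal O^{\rm main}[\phi_N,\psi]$ to land in the range of $\mathcal W$ with coefficients $-H_N$ applied to the $W$-coefficients of $\psi$, so that only $L_N$ needs inverting.
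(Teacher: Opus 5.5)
Your architecture is the paper's. The principal part of $\Id+\mathcal O_N$ is $\Id-\mathcal W H_N(\cdot)$, with kernel spanned by $\mathcal W\omega_N$. The remainder is $O(1/N)$ on $X^q_{\rm sym}$ simply because $|\nu_{N,k}|\le 1/N$ and the dyadic factors are summable. The sequence part is inverted with $R_N$, and a Neumann series is run on $g=-\mathcal E_N\mathcal W\omega_N$.

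\textbf{The gap: the splitting of $\psi$.} The problem is how you split $\psi$ into a ``sequence'' part and a complement. The high$\times$high$\to$low part does not see $\psi$ through your orthogonal projection $\{\langle\psi,\chi_jW_j\rangle/\n{W_j}_{L^2}^2\}_j$ onto the $\cos2\theta$ mode. It sees $\psi$ through $\kappa_j[\psi]=\frac{1}{2\pi}\int_{\R^2}\sin(2\theta_\eta)V_j(\eta)\psi(\eta)\,d\eta$, and by Lemma~\ref{lemm:k} that part equals $-\sum_k W_k\sum_{i>k}\nu_{N,i}\kappa_i[\psi]$.

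Since $\sin2\theta\sin4\theta=\frac12(\cos2\theta-\cos6\theta)$, and $L^2_{\rm sym}$ contains every mode $\cos(m\theta)$ with $m\equiv2\pmod 4$, the $\cos6\theta$ modes \emph{do} feed back at order one, contrary to your expectation. Even inside the $\cos2\theta$ mode, $\kappa_j$ uses the radial weight $\rho_j$, not $\chi_j^2$.

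Concretely, take $\zeta=\rho_j(\eta)\cos(6\theta_\eta)/|\eta|\in X^q_{\rm sym}$.
\begin{itemize}
\item Your projection annihilates $\zeta$.
\item Yet $\kappa_i[\zeta]=0$ for $i\ne j$, while $\kappa_j[\zeta]=-\frac14\int_0^\infty\rho(r)^2r^{-1}\,dr\neq0$.
\item So the main part of $\mathcal O_N\zeta$ equals $-\nu_{N,j}\kappa_j[\zeta]\sum_{k<j}W_k$.
\item This coincides with a nonzero multiple of $\cos(2\theta_\xi)/|\xi|$ near $\xi=0$, so it is not even in $L^2$.
\end{itemize}
Hence on your complementary piece the main part is neither zero nor perturbative. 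It does not even map into $X^q$ unless $\sum_j\nu_{N,j}\kappa_j[\zeta]=0$. The Neumann series there cannot close.

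\textbf{The missing idea: split with $\kappa$ itself.} The cutoff $\rho$ is supported where $\chi_j\equiv1$ and all other $\chi_k$ vanish, and is normalized by $\int_0^\infty\rho(r)r^{-1}\,dr=4$. This is exactly what gives $\kappa_j[W_k]=\delta_{j,k}$, i.e.\ $\kappa\mathcal W=\Id$. Your projection does not even recover the coefficients of $\mathcal Wp$, because $\chi_jW_j$ overlaps $W_{j\pm1}$.

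With this choice:
\begin{itemize}
\item $\mathcal W\kappa$ is an oblique projection.
\item On $Y^q=\{\psi\in X^q_{\rm sym}\,;\,\kappa\psi\in\ell^q_{\nu_N}(\Z)\}$ one has $\widetilde{\mathcal L}_N=\mathcal W L_N\kappa+(\Id-\mathcal W\kappa)$, which is \emph{exactly} the identity on $\ker\kappa$.
\item Hence $\mathcal R_N=\mathcal W R_N\kappa+(\Id-\mathcal W\kappa)$ is a right inverse, bounded from $X^q_{\rm sym}$ to $Y^q$ uniformly in $N$.
\item The constraint $\kappa\mathcal R_N\varphi=R_N\kappa\varphi\in\ell^q_{\nu_N}(\Z)$ holds automatically by Lemma~\ref{lemm:R_N}.
\end{itemize}
Your Step 2 then closes with $\psi_N=\mathcal W\omega_N+\mathcal R_N\varphi_N$ and $\varphi_N=-(\Id+\mathcal E_N\mathcal R_N)^{-1}\mathcal E_N\mathcal W\omega_N$, using $\n{\mathcal E_N\mathcal R_N}_{X^q\to X^q}\le C/N$.

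Uniformity in $q$ of the remainder estimate, which you list as an obstacle, is not needed, since $N_0$ and $C$ may depend on $q$.
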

\subsection{Reformulation of the problem}
We focus on the Oseen operator
\begin{align}
    \mathscr{O}_N[w]
    :=
    (-\Delta)^{-1}\mathbb P \sp{(v_N \cdot \nabla) w + (w \cdot \nabla) v_N}.
\end{align}
For any $w=\mathcal{U}[\psi]$ with $\psi \in L_{\rm sym}^2$, 
a scalar field corresponding to $\mathscr{O}_N[w]$ is given by
\begin{align}
    &
    \mathcal{O}_N[\psi](\xi)
    :={}
    i\frac{\xi^\perp}{|\xi|}
    \cdot
    \mathscr{F}\lp{\mathscr{O}_N[w]}(\xi)
    \\
    &
    ={}
    \frac{1}{2\pi|\xi|^2}
    \frac{\xi^\perp}{|\xi|} 
    \cdot 
    \widehat{\mathbb{P}}(\xi)
    \int_{\R^2}
    \frac{\sp{(\xi-\eta)^\perp\cdot \eta}\eta^\perp+ \sp{\eta^\perp \cdot (\xi-\eta)}(\xi-\eta)^\perp}{|\xi-\eta||\eta|}
    \phi_N(\xi-\eta)
    \psi(\eta)
    \, d\eta
    \\
    &
    ={}
    \frac{1}{2\pi|\xi|}
    \int_{\R^2}
    \frac{\sp{(\xi-\eta)^\perp\cdot \eta}\eta \cdot \xi + \sp{\eta^\perp \cdot (\xi-\eta)}(\xi-\eta) \cdot \xi}{|\xi|^2|\xi-\eta||\eta|}
    \phi_N(\xi-\eta)
    \psi(\eta)
    \, d\eta
    \\
    &
    ={}
    -
    \sum_{j \in \Z}
    \nu_{N,j}
    \mathcal{H}_j[\psi](\xi),
\end{align}
where we have set
\begin{align}
    \mathcal{H}_j[\psi](\xi)
    &:=
    \frac{1}{2\pi|\xi|}
    \int_{\R^2}
    \Gamma_\xi(\xi-\eta,\eta)V_j(\xi-\eta)\psi(\eta)\, 
    d\eta,
    \\
    \Gamma_\xi(\zeta,\eta)
    &:=
    \frac{(\eta^\perp \cdot \zeta)(|\zeta|^2-|\eta|^2)}{|\xi|^2|\eta||\zeta|}.
\end{align}
Thus, the Oseen equations
\begin{align}
    -\Delta w + \mathbb P \sp{(v_N \cdot \nabla) w + (w \cdot \nabla) v_N} = 0,
    \qquad
    w=\mathcal{U}[\psi]
\end{align}
are rewritten as 
\begin{align}\label{eq:sc-O}
    \mathcal{L}_N \psi = 0,
    \qquad
    \mathcal{L}_N:=\Id + \mathcal{O}_N.
\end{align}
We may easily check the following lemma.
\begin{lemm}
    For $j,k \in \Z$ and $\psi \in L^2_{\rm sym}$, it holds $\chi_k\mathcal{H}_j[\psi] \in L^2_{\rm sym}$.
\end{lemm}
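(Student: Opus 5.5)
The plan is to reduce everything to two invariance properties of the kernel $\Gamma_\xi$ under the linear maps
\begin{align}
    R\xi:=(-\xi_2,\xi_1),
    \qquad
    S\xi:=(-\xi_1,\xi_2),
\end{align}
combined with the symmetries of $V_j$ and $\psi$. In this notation, membership in $L^2_{\rm sym}$ means $\varphi\circ R=-\varphi$ and $\varphi\circ S=\varphi$. Membership in $\widetilde L^2_{\rm sym}$ means $\varphi\circ R=\varphi$ and $\varphi\circ S=-\varphi$. Since $\chi_k$ is radial, it commutes with both $R$ and $S$. It therefore suffices to prove three things: $\chi_k\mathcal{H}_j[\psi]$ is real-valued and in $L^2$, $\mathcal{H}_j[\psi](R\xi)=-\mathcal{H}_j[\psi](\xi)$, and $\mathcal{H}_j[\psi](S\xi)=\mathcal{H}_j[\psi](\xi)$ for a.e.\ $\xi\neq0$.

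\textbf{Realness and $L^2$.} Realness is immediate, since $\Gamma_\xi$, $V_j$ and $\psi$ are all real. For the $L^2$ bound, we use
\begin{align}
    |\eta^\perp\cdot\zeta|\leq|\eta||\zeta|
    \qquad\Longrightarrow\qquad
    |\Gamma_\xi(\zeta,\eta)|\leq\frac{|\zeta|^2+|\eta|^2}{|\xi|^2}.
\end{align}
We then restrict to the supports:
\begin{itemize}
    \item $\xi\in\supp\chi_k$ gives $|\xi|\simeq2^k$;
    \item $\zeta=\xi-\eta\in\supp V_j$ gives $|\zeta|\simeq 2^j$;
    \item consequently $|\eta|\leq|\xi|+|\zeta|\lesssim 2^k+2^j$.
\end{itemize}
Hence the kernel, together with the prefactor $1/|\xi|$, is bounded by a constant $C_{j,k}$ on the relevant region. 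This yields
\begin{align}
    |\chi_k\mathcal{H}_j[\psi]|\leq C_{j,k}\,(|V_j|*|\psi|).
\end{align}
Young's inequality with $V_j\in L^1$ and $\psi\in L^2$ then gives $\chi_k\mathcal{H}_j[\psi]\in L^2$.

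\textbf{Behaviour of the kernel.} $R$ is a rotation, so it preserves lengths and the cross product: $(R\eta)^\perp\cdot R\zeta=\eta^\perp\cdot\zeta$. Therefore
\begin{align}
    \Gamma_{R\xi}(R\zeta,R\eta)=\Gamma_\xi(\zeta,\eta).
\end{align}
$S$ is an orientation-reversing isometry, so $(S\eta)^\perp\cdot S\zeta=-\eta^\perp\cdot\zeta$ while lengths are preserved. Therefore
\begin{align}
    \Gamma_{S\xi}(S\zeta,S\eta)=-\Gamma_\xi(\zeta,\eta).
\end{align}

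\textbf{Change of variables.} In $\mathcal{H}_j[\psi](R\xi)$ we substitute $\eta=R\eta'$, which is measure-preserving, and note that $R\xi-R\eta'=R(\xi-\eta')$. The integrand becomes
\begin{align}
    \Gamma_\xi(\xi-\eta',\eta')\,V_j(R(\xi-\eta'))\,\psi(R\eta').
\end{align}
Using $V_j\circ R=V_j$ (since $V_j\in\widetilde L^2_{\rm sym}$) and $\psi\circ R=-\psi$, we obtain $\mathcal{H}_j[\psi](R\xi)=-\mathcal{H}_j[\psi](\xi)$.

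The same substitution with $S$ produces three factors:
\begin{itemize}
    \item $-1$ from the kernel;
    \item $-1$ from $V_j\circ S=-V_j$;
    \item $+1$ from $\psi\circ S=\psi$.
\end{itemize}
Their product is $+1$, so $\mathcal{H}_j[\psi](S\xi)=\mathcal{H}_j[\psi](\xi)$.

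Multiplying by the radial $\chi_k$ completes the argument. The only step needing real care is the sign bookkeeping for the cross product under $S$; everything else is routine.
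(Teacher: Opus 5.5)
Your proof is correct and is essentially the argument the paper intends. The paper only says the lemma is easily checked, and the ingredient it relies on is exactly the covariance $\Gamma_{T\xi}(T\zeta,T\eta)=\det(T)\,\Gamma_\xi(\zeta,\eta)$ from Lemma~\ref{lemm:Gamma}(2), applied with $T=R,S$ together with $V_j\in\widetilde L^2_{\rm sym}$, $\psi\in L^2_{\rm sym}$, the radiality of $\chi_k$, and the sign cancellation ($\det S=-1$ against $V_j\circ S=-V_j$) that you tracked correctly; your $L^2$ bound via Young's inequality is also fine, and could alternatively use the sharper bound $|\Gamma_\xi(\xi-\eta,\eta)|\le 1$.
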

\subsection{Principal part of the Oseen equations}
The aim of this section is to specify the principal part of the operator $\mathcal{L}_N$.
To begin with, we investigate the properties of the kernel $\Gamma_\xi(\zeta,\eta)$ for $\mathcal{H}_j$.
\begin{lemm}\label{lemm:Gamma}
    Let $\xi,\eta,\zeta \in \R^2 \setminus \{0\}$ with $\xi- \eta \neq 0$.
    Then, the following statements hold true.
    \begin{enumerate}
        \item 
        It holds
        \begin{align}
            \Gamma_\xi(\eta,\zeta)=\Gamma_\xi(\zeta,\eta),
            \qquad 
            |\Gamma_{\xi}(\xi-\eta,\eta)| \leq 1.
        \end{align}
        \item 
        For any orthogonal matrix $T \in \R^{2\times 2}$, it holds
        \begin{align}
            \Gamma_{T\xi}(T\zeta,T\eta) = \det(T) \Gamma_\xi(\zeta,\eta).
        \end{align}
        \item 
        For $0<\delta<1$, there exists a positive constant $C_\delta$, independent of $\xi$, $\eta$, and $\zeta$, such that 
        \begin{align}
            &
            \abso{\Gamma_\xi(\xi-\eta,\eta) - \gamma(\xi,\eta)}
            \leq C_\delta \frac{|\xi|}{|\eta|},
            \\
            &
            \abso{\nabla_\xi \sp{\Gamma_\xi(\xi-\eta,\eta) - \gamma(\xi,\eta)}}
            \leq \frac{C_\delta}{|\eta|},
        \end{align}
        provided 
        $|\xi|/|\eta|\leq \delta$,
        where we have set 
        \begin{align}
            \gamma(\xi,\eta)
            :=
            \sin \sp{2 ( \theta_\eta - \theta_\xi )}.
        \end{align}
        \item 
        There exists a positive constant $C$ such that 
        \begin{align}
            |\nabla_\xi (\Gamma_\xi(\xi-\eta,\eta))|
            &
            \leq 
            C
            \frac{|\xi-\eta|}{|\xi|^2}
            +
            C
            \sp{\frac{1}{|\xi|}+\frac{1}{|\xi-\eta|}}
            \frac{\big||\xi-\eta|^2-|\eta|^2\big|}{|\xi|^2},
            \\
            |\nabla_\xi \gamma(\xi,\eta)|
            &
            \leq 
            \frac{C}{|\xi|}.
        \end{align}
    \end{enumerate}
\end{lemm}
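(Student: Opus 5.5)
The plan is to reduce everything to one scalar expression in polar variables and then read off each item. Set $\zeta=\xi-\eta$. Since $\eta^\perp\cdot\eta=0$, we have $\eta^\perp\cdot(\xi-\eta)=\eta^\perp\cdot\xi$. Also $|\xi-\eta|^2-|\eta|^2=|\xi|^2-2\xi\cdot\eta$. Write $a=|\xi|$, $b=|\eta|$ and $\alpha=\theta_\eta-\theta_\xi$. Then $\eta^\perp\cdot\xi=\det(\eta,\xi)=-ab\sin\alpha$, so
\begin{align}
\Gamma_\xi(\xi-\eta,\eta)=-\frac{\sin\alpha\,(a-2b\cos\alpha)}{|\xi-\eta|},
\qquad
|\xi-\eta|^2=a^2-2ab\cos\alpha+b^2 .
\end{align}

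\textbf{Item (1).} The symmetry $\Gamma_\xi(\eta,\zeta)=\Gamma_\xi(\zeta,\eta)$ is immediate. Swapping $\zeta$ and $\eta$ flips the sign of $\eta^\perp\cdot\zeta=-\zeta^\perp\cdot\eta$ and also flips the sign of $|\zeta|^2-|\eta|^2$, while the denominator is symmetric. For the bound $|\Gamma_\xi(\xi-\eta,\eta)|\le 1$ I will verify the algebraic identity
\begin{align}
a^2-2ab\cos\alpha+b^2=(a-2b\cos\alpha)^2\sin^2\alpha+(a\cos\alpha-b\cos2\alpha)^2 .
\end{align}
It follows by expanding with $\cos2\alpha=2\cos^2\alpha-1$; the coefficients of $a^2$, $ab$ and $b^2$ match. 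The identity gives $|\Gamma|\le1$ at once.

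\textbf{Item (2).} For orthogonal $T$, $\eta^\perp\cdot\zeta=\det(\eta,\zeta)$ becomes $\det(T)\det(\eta,\zeta)$ under $(\zeta,\eta)\mapsto(T\zeta,T\eta)$. All norms are invariant, and the claim follows.

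\textbf{Item (3).} Put $t=a/b\in(0,\delta]$. The formula above reads $\Gamma_\xi(\xi-\eta,\eta)=F(t,\alpha)$ with
\begin{align}
F(t,\alpha)=-\sin\alpha\,(t-2\cos\alpha)\,(t^2-2t\cos\alpha+1)^{-1/2}.
\end{align}
The function $F$ is smooth on $[0,\delta]\times\T$, because the square-root factor is at least $1-\delta>0$. At $t=0$ we get $F(0,\alpha)=2\sin\alpha\cos\alpha=\sin2\alpha=\gamma(\xi,\eta)$. Let $G(t,\alpha)=F(t,\alpha)-F(0,\alpha)$. The mean value theorem gives $|G|\le C_\delta t=C_\delta|\xi|/|\eta|$. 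For the gradient I use the chain rule through $t$ and $\theta_\xi$:
\begin{align}
\nabla_\xi G=\partial_tG\,\frac{\xi}{|\xi||\eta|}-\partial_\alpha G\,\frac{\xi^\perp}{|\xi|^2}.
\end{align}
The first term is bounded by $C_\delta/|\eta|$. For the second, $\partial_\alpha G(0,\alpha)=0$, so $|\partial_\alpha G|\le C_\delta t$, and the term is bounded by $C_\delta t/|\xi|=C_\delta/|\eta|$. This cancellation is the one point needing care: without it the angular derivative would only give $1/|\xi|$, which is too weak.

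\textbf{Item (4).} I will differentiate the original quotient $N/D$ directly, with $N=(\eta^\perp\cdot\xi)(|\xi-\eta|^2-|\eta|^2)$ and $D=|\xi|^2|\eta||\xi-\eta|$. There are four contributions.
\begin{enumerate}
\item The derivative of $\eta^\perp\cdot\xi$ is $\eta^\perp$. This yields a term bounded by $\bigl||\zeta|^2-|\eta|^2\bigr|/(|\xi|^2|\zeta|)$.
\item The derivative of $|\zeta|^2$ is $2\zeta$. Using $|\eta^\perp\cdot\zeta|\le|\eta||\zeta|$, this gives at most $2|\zeta|/|\xi|^2$.
\item The derivatives of $|\xi|^{-2}$ and of $|\zeta|^{-1}$ each produce a factor $C/|\xi|$ or $C/|\zeta|$ times $|N/D|$. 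Here $|N/D|\le\bigl||\zeta|^2-|\eta|^2\bigr|/|\xi|^2$, again by $|\eta^\perp\cdot\zeta|\le|\eta||\zeta|$.
\end{enumerate}
Summing these gives the stated bound. Finally, $\nabla_\xi\gamma=-2\cos(2(\theta_\eta-\theta_\xi))\nabla\theta_\xi$ with $|\nabla\theta_\xi|=|\xi^\perp|/|\xi|^2=1/|\xi|$, so $|\nabla_\xi\gamma|\le 2/|\xi|$.

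I expect the main obstacle to be the sign and polar bookkeeping. Specifically, one must confirm that the leading term is exactly $+\sin(2(\theta_\eta-\theta_\xi))$, and one must obtain the vanishing of $\partial_\alpha G$ at $t=0$.
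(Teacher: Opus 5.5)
Your proposal is correct and follows essentially the same route as the paper: you use the same polar reduction to $s(\lambda-2c)/\sqrt{\lambda^2-2c\lambda+1}$, the same sum-of-squares identity $(c\lambda+1-2c^2)^2$ for $|\Gamma|\le 1$, the same cancellation in which the angular derivative of $\Gamma-\gamma$ vanishes at $\lambda=0$ (the paper encodes this through its $\int_0^\lambda$ formulas for $\partial_c\widetilde\Gamma$ and $\partial_s\widetilde\Gamma$), and direct differentiation for item (4). The only differences are cosmetic. You work with the two variables $(t,\alpha)$ and bound derivatives by smoothness of $F$ on the compact set $[0,\delta]\times\T$ together with the mean value theorem, whereas the paper writes out explicit integral representations in the three variables $(\lambda,c,s)$.
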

\begin{proof}
    We may see $\Gamma_\xi(\eta,\zeta)=\Gamma_\xi(\zeta,\eta)$ by the definition of $\Gamma_\xi(\zeta,\eta)$. The identity $\Gamma_{T\xi}(T\zeta,T\eta) = \det(T) \Gamma_\xi(\zeta,\eta)$ is proved by $(T\eta)^\perp \cdot (T\zeta)=\det (T\eta, T\zeta)=\det(T)\det(\eta,\zeta)=\det(T)(\eta^\perp\cdot \zeta)$.
    Thus, it remains to prove the other assertions.
    Let 
    \begin{align}
        \lambda=\lambda(\xi,\eta):=\frac{|\xi|}{|\eta|},
        \qquad
        \begin{cases}
        c=c(\xi,\eta):=\dfrac{\xi}{|\xi|} \cdot \dfrac{\eta}{|\eta|}=\cos(\theta_\xi-\theta_\eta),
        \\[10pt]
        s=s(\xi,\eta):=\dfrac{\eta^\perp}{|\eta|} \cdot \dfrac{\xi}{|\xi|} = \sin (\theta_\xi - \theta_\eta ).
        \end{cases}
    \end{align}
    Then, we may rewrite $\Gamma_\xi(\xi-\eta,\eta)$ as
    \begin{align}
        \Gamma_\xi(\xi-\eta,\eta)
        =
        \frac{s(\lambda-2c)}{\sqrt{\lambda^2-2c\lambda+1}}.
    \end{align}
    From $(\sqrt{\lambda^2-2c\lambda+1})^2 - (s(\lambda-2c))^2 = (c\lambda+1-2c^2)^2\geq 0$, we obtain $|\Gamma_\xi(\xi-\eta,\eta)| \leq 1$.

    Next, we consider the estimate of 
    \begin{align}
        \Gamma_\xi(\xi-\eta,\eta)-\gamma(\xi,\eta)
        =
        \frac{s(\lambda-2c)}{\sqrt{\lambda^2-2c\lambda+1}} + 2sc=:\widetilde{\Gamma}(\lambda,c,s).
    \end{align}
    Then, since 
    \begin{align}
        \widetilde{\Gamma}(\lambda,c,s)
        =
        \int_0^\lambda
        \frac{s(cr+1-2c^2)}{(r^2-2cr+1)^{3/2}}
        \, dr,
    \end{align}
    we have for $0<\lambda\leq \delta$ that
    \begin{align}
        \abso{\widetilde{\Gamma}(\lambda,c,s)}
        \leq \int_0^\lambda \frac{4}{(r^2-2r+1)^{3/2}}\, dr = \int_0^\lambda\frac{4}{(1-r)^3}\, dr \leq \frac{4\lambda}{(1-\delta)^3},
    \end{align}
    which provides the first estimate of (3).
    Combining the derivatives
    \begin{align}
        &
        \partial_\lambda \widetilde{\Gamma}(\lambda,c,s) = \frac{s(c\lambda+1-2c^2)}{(\lambda^2-2c\lambda+1)^{3/2}},
        \qquad
        \nabla_\xi \lambda = \frac{\xi}{|\xi||\eta|},
        \\
        &
        \partial_c 
        \widetilde{\Gamma}(\lambda,c,s)
        =
        \int_0^\lambda
        \frac{s\sp{r^3-3cr^2+(4+2c^2)r-4c}}{(r^2-2cr+1)^{5/2}}
        \, dr,
        \qquad
        \nabla_\xi c = \frac{\eta}{|\xi||\eta|} - \frac{(\xi\cdot\eta)\xi}{|\xi|^3|\eta|},
        \\
        &
        \partial_s
        \widetilde{\Gamma}(\lambda,c,s)
        =
        \int_0^\lambda
        \frac{cr+1-2c^2}{(r^2-2cr+1)^{3/2}}
        \, dr,
        \qquad
        \nabla_\xi s = \frac{\eta^\perp}{|\xi||\eta|} - \frac{(\xi\cdot\eta^\perp)\xi}{|\xi|^3|\eta|},
    \end{align}
    we obtain the second estimate of (3). 
    We also obtain the second estimate of (4) as $\gamma(\xi,\eta)=-2s(\xi,\eta)c(\xi,\eta)$.
    The first estimate of (4) follows from
    \begin{align}
        &
        \nabla_\xi \sp{\frac{\eta^\perp \cdot (\xi-\eta)}{|\eta||\xi-\eta|}}
        =
        \frac{\eta^\perp}{|\eta||\xi-\eta|}-\frac{(\eta^\perp \cdot (\xi-\eta))(\xi-\eta)}{|\eta||\xi-\eta|^3},
        \\
        &
        \nabla_\xi \sp{\frac{|\xi-\eta|^2-|\eta|^2}{|\xi|^2}}
        =
        \frac{2(\xi-\eta)}{|\xi|^2}
        -
        \frac{|\xi-\eta|^2-|\eta|^2}{|\xi|^2}
        \frac{2\xi}{|\xi|^2}.
    \end{align}
    Thus, we complete the proof.
\end{proof}
In the view of the third assertion of Lemma \ref{lemm:Gamma}, 
we consider the operator $\mathcal{H}_j$ with $\Gamma_\xi(\xi-\eta,\eta)$ replaced by $\gamma(\xi,\eta)$:
\begin{align}
    \widetilde{\mathcal{H}}_j[\psi](\xi)
    :={}&
    \frac{1}{2\pi|\xi|}
    \int_{\R^2}
    \gamma(\xi,\eta)V_j(\eta)\psi(\eta)\, d\eta
    \\
    ={}&
    \frac{\cos (2\theta_\xi)}{2\pi|\xi|}
    \int_{\R^2}
    \sin (2\theta_\eta)V_j(\eta)\psi(\eta)\, d\eta
    -
    \frac{\sin (2\theta_\xi)}{2\pi|\xi|}
    \int_{\R^2}
    \cos (2\theta_\eta)V_j(\eta)\psi(\eta)\, d\eta
    \\
    ={}&
    \frac{\cos (2\theta_\xi)}{2\pi|\xi|}
    \int_{\R^2}
    \sin (2\theta_\eta)V_j(\eta)\psi(\eta)\, d\eta.
    \label{df:tH}
\end{align}
Here, we have used 
\begin{align}
    \int_{\R^2}
    \cos (2\theta_\eta)V_j(\eta)\psi(\eta)\, d\eta
    =
    0,
\end{align}
which is implied by the change of the variables $(\eta_1,\eta_2) \mapsto (-\eta_1,\eta_2)$.
We define
\begin{align}
    \kappa \psi
    =\{\kappa_j[\psi]\}_{j \in \Z},
    \qquad
    \kappa_j[\psi]
    :=
    \frac{1}{2\pi}
    \int_{\R^2}
    \sin (2\theta_\eta)
    V_j(\eta)\psi(\eta)\, d\eta.
\end{align}
\begin{lemm}\label{lemm:k}
    For $j,k \in \Z$ and $\psi \in L^2_{\rm sym}$, it holds 
    \begin{align}
    \chi_k(\xi)\widetilde{\mathcal{H}}_j[\psi](\xi)
    =
    W_k(\xi)\kappa_j[\psi],
    \qquad
    \kappa_j[W_k]=\delta_{j,k}.
\end{align}
Moreover, there exists a positive constant $C$ such that 
\begin{align}
    \n{\kappa \psi}_{\ell^q} \leq C\| \psi \|_{X^q}
\end{align}
for all $1 \leq q \leq 2$ and $\psi \in X^q_{\rm sym}$.
\end{lemm}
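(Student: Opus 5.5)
The first identity is immediate from the formula \eqref{df:tH}. Multiplying by $\chi_k(\xi)$ gives
\begin{align}
    \chi_k(\xi)\widetilde{\mathcal{H}}_j[\psi](\xi)
    =
    \chi_k(\xi)\frac{\cos(2\theta_\xi)}{|\xi|}\cdot\frac{1}{2\pi}\int_{\R^2}\sin(2\theta_\eta)V_j(\eta)\psi(\eta)\,d\eta
    =
    W_k(\xi)\kappa_j[\psi],
\end{align}
by the definitions of $W_k$ and $\kappa_j$. Note that the vanishing of the $\sin(2\theta_\xi)$-term used $\psi\in L^2_{\rm sym}$ and $V_j\in\widetilde L^2_{\rm sym}$. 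Under the reflection $(\eta_1,\eta_2)\mapsto(-\eta_1,\eta_2)$ we have $\theta\mapsto\pi-\theta$, so $\cos(2\theta_\eta)$ is even, $V_j$ is odd, and $\psi$ is even. Hence the integrand is odd and the integral vanishes.

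For $\kappa_j[W_k]=\delta_{j,k}$, I use the support relation $\rho_j=\chi_j\rho_j$ together with the support properties of $\chi$. Since $\supp\rho_j\subset\{2^{j-1/8}\le|\eta|\le 2^{j+1/8}\}$ and $\chi_k=0$ there for $k\ne j$, we get $V_jW_k=0$ for $k\neq j$. Indeed, on that annulus $\chi_j=1$, and $\sum_k\chi_k=1$ forces the other $\chi_k$ to vanish.

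For $k=j$ I compute in polar coordinates, using $\chi_j=1$ on $\supp\rho_j$:
\begin{align}
    \kappa_j[W_j]
    =\frac{1}{2\pi}\int_0^\infty\frac{\rho(2^{-j}r)}{r^2}\,r\,dr\int_0^{2\pi}\sin(2\theta)\sin(4\theta)\cos(2\theta)\,d\theta .
\end{align}
The radial integral equals $\int_0^\infty\rho(s)s^{-1}ds=4$. For the angular integral, $\sin(2\theta)\cos(2\theta)=\tfrac12\sin(4\theta)$, so it equals $\tfrac12\int_0^{2\pi}\sin^2(4\theta)\,d\theta=\tfrac{\pi}{2}$. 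Altogether $\kappa_j[W_j]=\frac{1}{2\pi}\cdot4\cdot\frac{\pi}{2}=1$. This arithmetic is exactly why $\rho$ was normalized to $4$.

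For the $\ell^q$ bound, I again use $V_j=\chi_jV_j$ and Cauchy--Schwarz:
\begin{align}
    |\kappa_j[\psi]|
    \le\frac{1}{2\pi}\n{V_j}_{L^2}\n{\chi_j\psi}_{L^2}
    =\frac{1}{2\pi}\n{V_0}_{L^2}\n{\chi_j\psi}_{L^2},
\end{align}
where I use the scaling identity $\n{V_j}_{L^2}=\n{V_0}_{L^2}$. Taking the $\ell^q$ norm in $j$ gives $\n{\kappa\psi}_{\ell^q}\le C\sp{\sum_j\n{\chi_j\psi}_{L^2}^q}^{1/q}\le C\n{\psi}_{X^q}$, with $C$ independent of $q$.

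There is no real obstacle here. The only points needing care are the support bookkeeping for $k\ne j$ and the sign and normalization in the angular integral.
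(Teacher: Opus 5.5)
Your proof is correct and essentially the paper's own. The first identity is read off from the formula for $\widetilde{\mathcal{H}}_j$, and $\kappa_j[W_k]=\delta_{j,k}$ follows from $\rho_j\chi_k=\delta_{j,k}\rho_j$ together with the polar-coordinate computation using $\int_0^\infty \rho(r)r^{-1}\,dr=4$. The $\ell^q$ bound comes from $|\kappa_j[\psi]|=|\kappa_j[\chi_j\psi]|\le C\n{V_0}_{L^2}\n{\chi_j\psi}_{L^2}$. Your recheck of the reflection argument that removes the $\sin(2\theta_\xi)$ term is a harmless addition; the paper does that step just before stating the lemma.
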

\begin{proof}
    The first one immediately follows from \eqref{df:tH}.
    The second one is proved by 
    \begin{align}
        \kappa_j[W_k]
        &
        =
        \frac{\delta_{j,k}}{4\pi}
        \int_{\R^2}
        \rho_j(\eta)\frac{\sin^2 (4\theta_\eta)}{|\eta|^2}\, d\eta
        \\
        &
        =
        \frac{\delta_{j,k}}{4\pi}
        \int_0^\infty \frac{\rho(2^{-j}r)}{r}\, dr \int_{-\pi}^\pi \sin^2 (4\theta)\, d\theta
        =
        \delta_{j,k},
    \end{align}
    which is implied by $\rho_j(\eta)\chi_k(\eta)=\delta_{j,k}\rho_j(\eta)$.
    For the boundedness of $\kappa$, we see that
    \begin{align}
        |\kappa_j[\psi]|
        =
        |\kappa_j[\chi_j\psi]|
        \leq 
        \n{V_j}_{L^2}\n{\chi_j\psi}_{L^2}
        =
        \n{V_0}_{L^2}\n{\chi_j\psi}_{L^2}.
    \end{align}
    Taking $\ell^q(\Z)$-norm, we complete the proof.
\end{proof}
Now, we define the principal part of $\mathcal{L}_N$ as follows.
\begin{df}
    For $1 < q \leq 2$ and $N \in \N$, we define 
    \begin{align}
        &
        \widetilde{\mathcal{L}}_N\psi
        :=
        \psi - \mathcal{W}H_N\kappa\psi, 
        \qquad 
        \psi 
        \in 
        Y^q
        :=
        \Mp{
        \psi \in X^q_{\rm sym}\ ;\ \kappa\psi \in \ell_{\nu_N}^q(\Z)
        }.
    \end{align}
\end{df}
\begin{lemm}
    For $1 < q \leq 2$, $\widetilde{\mathcal{L}}_N$ is a bounded linear operator from $Y^q$ to $X^q_{\rm sym}$.
    Moreover, it holds 
    \begin{align}
        \operatorname{ker}\widetilde{\mathcal{L}}_N=\operatorname{span}\{\mathcal{W}\omega_N\}.
    \end{align}
\end{lemm}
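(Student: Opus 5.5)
Boundedness follows by composing the earlier results, and the kernel is identified by reducing to the sequence operator $L_N$ through $\kappa$.

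\textbf{Boundedness.} For $\psi\in Y^q$ we have $\kappa\psi\in\ell^q_{\nu_N}(\Z)$, with $\|\kappa\psi\|_{\ell^q}\le C\|\psi\|_{X^q}$ by Lemma \ref{lemm:k}. Lemma \ref{lemm:H_N} then gives $H_N\kappa\psi\in\ell^q(\Z)$ with norm at most $C\|\psi\|_{X^q}$. Since $\mathcal W$ is bounded from $\ell^q(\Z)$ to $X^q_{\rm sym}$, it follows that $\mathcal W H_N\kappa\psi\in X^q_{\rm sym}$ and
\begin{align}
\|\widetilde{\mathcal L}_N\psi\|_{X^q}\le \|\psi\|_{X^q}+C\|\psi\|_{X^q}.
\end{align}
Linearity is clear, and the difference of two elements of $X^q_{\rm sym}$ stays in $X^q_{\rm sym}$.

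\textbf{Kernel: first inclusion.} The key identity is $\kappa\mathcal W p=p$ for $p\in\ell^q$. To see it, note that $\kappa_j[\mathcal W p]=\sum_k p_k\kappa_j[W_k]=p_j$ by Lemma \ref{lemm:k}. Only finitely many $k$ contribute here, because $\rho_j\chi_k=0$ unless $k=j$; in fact only the $k=j$ term survives. Now take $\psi=\mathcal W\omega_N$, which lies in $X^q_{\rm sym}$ since $\omega_N\in\ell^q$ for $q>1$. Then $\kappa\psi=\omega_N$, and $\sum_j\nu_{N,j}\omega_{N,j}=0$, so $\psi\in Y^q$. Moreover
\begin{align}
\widetilde{\mathcal L}_N\psi=\mathcal W\omega_N-\mathcal W H_N\omega_N=0,
\end{align}
because $H_N\omega_N=\omega_N$. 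Hence $\operatorname{span}\{\mathcal W\omega_N\}\subset\ker\widetilde{\mathcal L}_N$.

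\textbf{Kernel: reverse inclusion.} Suppose $\psi\in Y^q$ and $\widetilde{\mathcal L}_N\psi=0$, so $\psi=\mathcal W H_N\kappa\psi$. Applying $\kappa$ and using $\kappa\mathcal W=\Id$ gives $\kappa\psi=H_N\kappa\psi$, that is, $L_N(\kappa\psi)=0$ with $\kappa\psi\in\ell^q_{\nu_N}(\Z)$. Lemma \ref{lemm:Ker_L_N} yields $\kappa\psi=c\,\omega_N$ for some $c\in\R$. Therefore
\begin{align}
\psi=\mathcal W H_N(c\,\omega_N)=c\,\mathcal W\omega_N.
\end{align}

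The only point needing care is the identity $\kappa\mathcal W=\Id$ on $\ell^q$. This rests on the support condition $\operatorname{supp}\rho\subset[2^{-1/8},2^{1/8}]$, on the fact that $\chi\equiv1$ on $[2^{-1/4},2^{1/4}]$, and on $\sum_k\chi_k\equiv1$. Together these force $\chi_k\rho_j=\delta_{jk}\rho_j$, which is the relation already used in Lemma \ref{lemm:k}.
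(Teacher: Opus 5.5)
Your proof is correct and follows the paper's argument essentially verbatim: boundedness comes from composing $\kappa$, $H_N$ and $\mathcal{W}$, and the kernel is identified via $\kappa\mathcal{W}=\Id$ together with Lemma \ref{lemm:Ker_L_N}. You also check explicitly that $\mathcal{W}\omega_N\in Y^q$ and that the sum defining $\kappa_j[\mathcal{W}p]$ is finite, two small points the paper leaves implicit.
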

\begin{proof}
    Since the operators $\kappa:Y^q \to \ell_{\nu_N}^q(\Z)$, $H_N:\ell_{\nu_N}^q(\Z)\to \ell^q(\Z)$, and $\mathcal{W}:\ell^q(\Z) \to X^q_{\rm sym}$ are all bounded, we see that $\widetilde{\mathcal{L}}_N$ is bounded from $Y^q$ to $X^q_{\rm sym}$.
    Next, we focus on the kernel of $\widetilde{\mathcal{L}}_N$. It follows from Lemma \ref{lemm:k} that 
    \begin{align}
        \kappa_j[\mathcal{W}p]
        =
        \sum_{k \in \Z}\kappa_j[W_k]p_k=p_j,
    \end{align}
    that is, $\kappa \mathcal{W} = \Id$.
    Thus, we have
    \begin{align}
        \widetilde{\mathcal{L}}_N\mathcal{W}\omega_N
        =\mathcal{W}\omega_N - \mathcal{W} H_N \omega_N =\mathcal{W}L_N\omega_N = 0,
    \end{align}
    which implies that it suffices to show $\operatorname{ker}\widetilde{\mathcal{L}}_N\subset \operatorname{span}\{\mathcal{W}\omega_N\}$ in the rest of the proof. 
    Let $\psi \in \operatorname{ker}\widetilde{\mathcal{L}}_N$.
    Since $\psi = \mathcal{W}H_N\kappa\psi$ and $\kappa \mathcal{W} = \Id$, we have
    \begin{align}
        \kappa \psi = \kappa \mathcal{W}H_N\kappa \psi = H_N \kappa \psi.
    \end{align}
    Hence, we see that $\kappa \psi \in \operatorname{ker}L_N$ and Lemma \ref{lemm:Ker_L_N} yields $\kappa \psi = c \omega_N$ for some constant $c \in \R$.
    Then, we have
    \begin{align}
        \psi = \mathcal{W}H_N\kappa \psi = c \mathcal{W}H_N\omega_N= c \mathcal{W}\omega_N \in \operatorname{span}\{ \mathcal{W}\omega_N\}.
    \end{align}
    This completes the proof.
\end{proof}
Let us next find a right inverse of $\widetilde{\mathcal{L}}_N$.
For fixed $\psi \in X^q_{\rm sym}$, we shall solve $\widetilde{\mathcal{L}}_N\phi=\psi$.
By $\phi=\mathcal{W}\kappa \phi + (\Id - \mathcal{W}\kappa) \phi$ 
and $\psi=\mathcal{W}\kappa \psi + (\Id - \mathcal{W}\kappa) \psi$, 
we see that 
$\mathcal{W}L_N\kappa \phi + (\Id - \mathcal{W}\kappa)\phi = \mathcal{W}\kappa \psi + (\Id - \mathcal{W}\kappa) \psi$, which yields
\begin{align}\label{phi-psi}
    \mathcal{W}(L_N\kappa \phi - \kappa \psi) = (\Id - \mathcal{W}\kappa)(\psi-\phi).
\end{align}
Hence, applying $\kappa$ to the above identity, we have by $\kappa \mathcal{W} = \Id$ that $L_N\kappa \phi - \kappa \psi = 0$, which is equivalent to $L_N(\kappa \phi - R_N\kappa \psi) = 0$.
Then, we see by Lemma \ref{lemm:Ker_L_N} that $\kappa \phi = c \omega_N+R_N\kappa\psi$ for some $c \in \R$. Substituting this into the right-hand side of \eqref{phi-psi}, we have $(\Id - \mathcal{W}\kappa)\phi=(\Id - \mathcal{W}\kappa)\psi$.
Therefore, it holds 
\begin{align}
    \phi 
    = 
    \mathcal{W}\kappa\phi + (\Id -\mathcal{W}\kappa)\phi
    = 
    c \mathcal{W}\omega_N + \mathcal{W}R_N\kappa\psi + (\Id -\mathcal{W}\kappa)\psi.
\end{align}
From this observation, we immediately get the following lemma.
\begin{lemm}
    Let $1 < q \leq 2$ and $N \in \N$. Let
    \begin{align}
        \mathcal{R}_N\psi:=\mathcal{W}R_N\kappa \psi + (\Id-\mathcal{W}\kappa)\psi,
        \qquad
        \psi \in X^q_{\rm sym}.
    \end{align}
    Then, $\mathcal{R}_N$ is a bounded linear operator from $X^q_{\rm sym}$ to $Y^q$ and it holds $\widetilde{\mathcal{L}}_N\mathcal{R}_N=\Id$.
\end{lemm}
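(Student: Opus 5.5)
We need to prove three things: that $\mathcal{R}_N$ is bounded from $X^q_{\rm sym}$ into $X^q_{\rm sym}$, that $\mathcal{R}_N\psi$ actually lies in $Y^q$ (i.e.\ $\kappa\mathcal{R}_N\psi\in\ell^q_{\nu_N}(\Z)$), and that $\widetilde{\mathcal{L}}_N\mathcal{R}_N=\Id$. All three follow by composing the operator facts already established, with $\kappa\mathcal{W}=\Id$ as the key algebraic input.

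\emph{Boundedness.} By Lemma \ref{lemm:k}, $\kappa:X^q_{\rm sym}\to\ell^q(\Z)$ is bounded. By Lemma \ref{lemm:R_N}, $R_N:\ell^q(\Z)\to\ell^q(\Z)$ is bounded, and the same lemma gives $R_N y\in\ell^q_{\nu_N}(\Z)$. Finally, $\mathcal{W}:\ell^q(\Z)\to X^q_{\rm sym}$ is bounded by the lemma on $\mathcal{W}$. Hence both $\mathcal{W}R_N\kappa$ and $\Id-\mathcal{W}\kappa$ map $X^q_{\rm sym}$ boundedly into $X^q_{\rm sym}$; the symmetry class is preserved because $W_j\in L^2_{\rm sym}$. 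So $\mathcal{R}_N$ is bounded into $X^q_{\rm sym}$, with operator norm controlled by $C(q)$.

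\emph{Membership in $Y^q$.} Applying $\kappa$ and using $\kappa\mathcal{W}=\Id$ (shown in the proof of the kernel lemma), we get
\begin{align}
\kappa\mathcal{R}_N\psi=R_N\kappa\psi+\kappa\psi-\kappa\psi=R_N\kappa\psi\in\ell^q_{\nu_N}(\Z)
\end{align}
by Lemma \ref{lemm:R_N}. Therefore $\mathcal{R}_N\psi\in Y^q$, and the $Y^q$ norm (the $X^q$ norm) is bounded as above.

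\emph{Right inverse.} Put $\phi=\mathcal{R}_N\psi$. Then
\begin{align}
\widetilde{\mathcal{L}}_N\phi
=\phi-\mathcal{W}H_N\kappa\phi
=\mathcal{W}R_N\kappa\psi+\psi-\mathcal{W}\kappa\psi-\mathcal{W}H_NR_N\kappa\psi
=\psi+\mathcal{W}\bigl(L_NR_N\kappa\psi-\kappa\psi\bigr)=\psi,
\end{align}
where the last step uses $L_NR_N=\Id$ on $\ell^q(\Z)$ from Lemma \ref{lemm:R_N}.

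There is no real obstacle here, since the analytic content sits in Lemma \ref{lemm:R_N} (the Hardy inequalities). The only point needing care is checking that $\kappa\mathcal{R}_N\psi$ lies in the constrained space $\ell^q_{\nu_N}$, so that $H_N$ is applied only inside its domain.
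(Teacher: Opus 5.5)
Your proof is correct and rests on the same ingredients as the paper: $\kappa\mathcal{W}=\Id$, the boundedness of $\kappa$, $R_N$ and $\mathcal{W}$, and the facts $L_NR_N=\Id$ and $R_N y\in\ell^q_{\nu_N}(\Z)$ from Lemma \ref{lemm:R_N}. The paper only derives the formula for $\mathcal{R}_N$, by splitting $\phi$ along $\mathcal{W}\kappa$ to read off the form any solution must have, and then calls the lemma immediate; your computations $\kappa\mathcal{R}_N\psi=R_N\kappa\psi$ and $\widetilde{\mathcal{L}}_N\mathcal{R}_N\psi=\psi$ are exactly the sufficiency check the paper leaves implicit.
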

\subsection{Error estimates for the Oseen operator}
In this subsection, we focus on the following remainder terms:
\begin{align}
    \mathcal{E}_N[\psi](\xi)
    &:=
    -\sum_{j,k,\ell \in \Z}\nu_{N,k}E_j^{k,\ell}[\psi](\xi),
    \\
    E_j^{k,\ell}[\psi](\xi)
    &:=\chi_j(\xi)\mathcal{H}_k[\chi_\ell\psi](\xi) - \bb{1}_{\{j<k=\ell\}}(j,k,\ell)\chi_j(\xi)\widetilde{\mathcal{H}}_k[\psi](\xi).
\end{align}
Then, it holds 
\begin{align}
    \mathcal{L}_N\psi=\widetilde{\mathcal{L}}_N\psi + \mathcal{E}_N\psi, \qquad \psi \in Y^q,
\end{align}
which follows from
\begin{align}
    \mathcal{E}_N[\psi]
    ={}&
    -\sum_{j\in \Z}\chi_j\sum_{k \in \Z} \nu_{N,k}\mathcal{H}_k\lp{\sum_{\ell \in \Z}\chi_\ell\psi}
    +
    \sum_{j \in \Z}
    \chi_j
    \sum_{k>j}\nu_{N,k}
    \widetilde{\mathcal{H}}_k[\psi]
    \\
    ={}&
    \mathcal{O}_N[\psi]
    +
    \mathcal{W}H_N\kappa\psi
    \\
    ={}&
    \mathcal{L}_N[\psi]
    -
    \widetilde{\mathcal{L}}_N[\psi].
\end{align}
The aim of this subsection is to prove the following proposition.
\begin{prop}\label{prop:E_N}
    Let $1 \leq q \leq 2$ and $N \in \N$.
    Then, the linear operator $\mathcal{E}_N$ is bounded on $X^q_{\rm sym}$.
    Moreover, there exists a positive constant $C$ independent of $q$ and $N$ such that 
    \begin{align}
        \n{\mathcal{E}_N[\psi]}_{X^q} \leq \frac{C}{N} \n{\psi}_{X^q}
    \end{align}
    for all $\psi \in X^q_{\rm sym}$.
\end{prop}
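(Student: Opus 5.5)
The plan is to split the triple sum defining $\mathcal{E}_N$ according to the relative sizes of $j$ (output frequency), $k$ (frequency of $V_k$, the background) and $\ell$ (frequency of the input $\chi_\ell\psi$). The key facts are the weight bound $|\nu_{N,k}|\le (N+|k-1|)^{-1}\le 1/N$ and the geometric decay of every off-diagonal interaction. Since $V_k$ is supported in $|\zeta|\simeq 2^k$, $\chi_\ell\psi$ in $|\eta|\simeq 2^\ell$, and $\chi_j$ in $|\xi|\simeq 2^j$, the term $E_j^{k,\ell}$ vanishes unless the three scales obey the triangle constraint. So only three families survive:
\begin{enumerate}
    \item[(a)] $|k-\ell|\le 2$ with $j\le \max(k,\ell)+2$ (high$\times$high$\to$low and comparable);
    \item[(b)] $|j-\ell|\le 2$ with $k\le \ell+2$ (low background times high input);
    \item[(c)] $|j-k|\le 2$ with $\ell\le k+2$ (low input times high background).
\end{enumerate}
Within (a), the part with $k=\ell$ and $j<k-M$ for a fixed large $M$ has $\chi_j\widetilde{\mathcal H}_k$ subtracted. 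The remaining near-diagonal terms with $|j-k|,|j-\ell|\le M$ are finitely many per $j$. For each family I will bound $\|E_j^{k,\ell}[\psi]\|$ in the two pieces of the $X^q$ norm at level $j$: the $L^2$ piece and the half-Hölder difference quotient. The target is a bound of the form $C2^{-\epsilon|j-k|-\epsilon|j-\ell|}A_\ell(\psi)$, where $A_\ell(\psi):=\|\chi_\ell\psi\|_{L^2}+\sup_\eta(\cdots)$ is the $\ell$-th summand of $\|\psi\|_{X^q}$. Then Young's inequality for convolution on $\Z$, together with $|\nu_{N,k}|\le 1/N$, gives $\|\mathcal E_N\psi\|_{X^q}\le CN^{-1}\|\psi\|_{X^q}$ for every $1\le q\le 2$.

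For the $L^2$ bounds I will use $|\Gamma_\xi|\le1$ from Lemma \ref{lemm:Gamma}(1) and Young/Cauchy--Schwarz:
\begin{align}
\|\chi_j\mathcal H_k[\chi_\ell\psi]\|_{L^2}\lesssim \|\chi_j|\xi|^{-1}\|_{L^2}\,\|V_k\|_{L^2}\|\chi_\ell\psi\|_{L^2}\lesssim \|\chi_\ell\psi\|_{L^2}.
\end{align}
Gains then come from the geometry of each family.
\begin{itemize}
    \item In case (b), $k\ll\ell$: use the bound $\|V_k*g\|_{L^2}\le\|V_k\|_{L^1}\|g\|_{L^2}\simeq 2^k\|g\|_{L^2}$ and the factor $|\xi|^{-1}\simeq 2^{-j}$. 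This gives $2^{k-j}$.
    \item In case (c), $\ell\ll k$: put $\chi_\ell\psi$ in $L^1$, using $\|\chi_\ell\psi\|_{L^1}\lesssim 2^{\ell}\|\chi_\ell\psi\|_{L^2}$, and $V_k$ in $L^2$. This gives $2^{\ell-j}$.
    \item In case (a) with $j\ll k$, the nontrivial case, write $\mathcal H_k-\widetilde{\mathcal H}_k$ as the sum of two terms. The first uses $\Gamma-\gamma$, which Lemma \ref{lemm:Gamma}(3) bounds by $|\xi|/|\eta|\simeq 2^{j-k}$. The second is the shift error $V_k(\xi-\eta)-V_k(-\eta)$; note $\widetilde{\mathcal H}_k$ is written with $V_k(\eta)$, and the oddness of $V_k$ converts this. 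The shift error is controlled by $|\xi|\,\|\nabla V_k\|$, giving $2^{j-k}$. Note this case needs no Hölder regularity of $\psi$, since the difference falls on the smooth $V_k$. Off-diagonal terms $k\ne\ell$ with $j\ll k$ are handled by the same $L^1$/$L^2$ counting, plus the support constraint $|\xi|\ll 2^k$, which forces $|k-\ell|\le 2$.
\end{itemize}

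The main obstacle is the half-Hölder part of the $X^q$ norm at level $j$, that is, estimating $\|(E_j^{k,\ell}\psi)(\cdot+h)-E_j^{k,\ell}\psi\|_{L^2}$ by $\min(1,2^{-j}|h|)^{1/2}$ times the same decaying factor. I will interpolate between the trivial bound $2\|E_j^{k,\ell}\psi\|_{L^2}$ and a gradient bound $|h|\,\|\nabla_\xi E_j^{k,\ell}\psi\|_{L^2}$. The $\xi$-derivative can fall in three places:
\begin{itemize}
    \item on $\chi_j|\xi|^{-1}$, costing $2^{-j}$;
    \item on $V_k(\xi-\eta)$, costing $2^{-k}\le 2^{-j}$ up to constants in cases (a) and (c);
    \item on the kernel, where Lemma \ref{lemm:Gamma}(3),(4) give $|\nabla_\xi(\Gamma-\gamma)|\lesssim 2^{-k}$ and $|\nabla_\xi\Gamma|\lesssim 2^{-j}\cdot(\text{ratio})$.
\end{itemize}
In case (b) the singular factor $|\xi-\eta|^{-1}$ from Lemma \ref{lemm:Gamma}(4) is a problem. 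There I will instead move the shift onto $\chi_\ell\psi$ by the change of variables $\eta\mapsto\eta+h$. This uses exactly the Hölder seminorm of $\chi_\ell\psi$ with $\ell\simeq j$, producing $\min(1,2^{-\ell}|h|)^{1/2}A_\ell(\psi)$. This is where the $X^q$ structure, rather than mere $L^2$, is needed. Interpolating, $\min(a,b)\le a^{1/2}b^{1/2}$ preserves half of each geometric gain, which is enough for Young's inequality. Finally, to check that $\mathcal E_N$ maps into $L^2_{\rm sym}$, I will use the lemma that $\chi_k\mathcal H_j[\psi]\in L^2_{\rm sym}$ together with $\mathcal W\kappa\psi\in L^2_{\rm sym}$.
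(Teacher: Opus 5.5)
There is a genuine gap in your treatment of the high$\times$high$\to$low terms with $k\neq\ell$. The rest of your architecture matches the paper's: support restriction of the triple sum, $L^2$ bounds with geometric gain, a gradient bound at scale $2^{-j}$, the interpolation $\min\{a,b\}\le a^{1/2}\min\{1,b\}^{1/2}$, and Young's inequality on $\Z$.

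By definition, $E_j^{k,\ell}$ subtracts $\chi_j\widetilde{\mathcal H}_k[\psi]$ only when $j<k=\ell$. So for $k\neq\ell$, $|k-\ell|\le 2$, $j\ll k$ one simply has $E_j^{k,\ell}[\psi]=\chi_j\mathcal H_k[\chi_\ell\psi]$.

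You say these terms are ``handled by the same $L^1$/$L^2$ counting, plus the support constraint''. But with $|k-\ell|\le 2$, no placement of $L^1$ and $L^2$ yields decay in $k-j$. Cauchy--Schwarz gives $\n{E_j^{k,\ell}[\psi]}_{L^2}\lesssim \n{\chi_j|\xi|^{-1}}_{L^2}\n{V_k}_{L^2}\n{\chi_\ell\psi}_{L^2}\simeq\n{\chi_\ell\psi}_{L^2}$ uniformly in $j<k$. Putting $V_k$ or $\chi_\ell\psi$ in $L^1$ gives the worse factor $2^{k-j}$. With only such bounds, the sum over $k>j$ is not summable in $\ell^q_j$.

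This is not an artefact of crude estimates. If $\supp V_k$ met $\supp\chi_\ell$, these terms would carry the principal part $W_j\cdot\frac{1}{2\pi}\int\sin(2\theta_\eta)V_k\chi_\ell\psi\,d\eta$. After weighting by $\nu_{N,k}$ and summing over $k>j$, this is of $H_N$-type, not $O(1/N)$.

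The constraint $|\xi|\ll 2^k$ forces only $|k-\ell|\le 2$ for generic Littlewood--Paley cut-offs. The ingredient you are missing is the specific choice $\supp\rho\subset[2^{-1/8},2^{1/8}]\subset\{\chi=1\}$.
\begin{itemize}
\item This choice makes $\supp V_k$ and $\supp\chi_\ell$, for $\ell\neq k$, radially separated by a gap $\gtrsim 2^{\max\{k,\ell\}}$.
\item Hence $|\xi|\ge\bigl||\xi-\eta|-|\eta|\bigr|\gtrsim 2^{\max\{k,\ell\}}$, and $E_j^{k,\ell}\equiv 0$ unless $j\ge\max\{k,\ell\}-5$. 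This is Lemma \ref{lemm:E-supp}.
\item Equivalently, you could run your $\Gamma-\gamma$/shift splitting also for $k\neq\ell$, noting that the would-be principal term vanishes because $V_k\chi_\ell\equiv 0$.
\item You silently use the same fact in your diagonal case, since $\widetilde{\mathcal H}_k[\psi]=\widetilde{\mathcal H}_k[\chi_k\psi]$ requires $V_k\chi_k=V_k$.
\end{itemize}

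Two smaller points. First, $V_k$ is even, not odd, because $\sin 4\theta$ is $\pi$-periodic. It is evenness that turns $V_k(\eta)$ into $V_k(-\eta)$ with the correct sign; oddness would double the principal part instead of cancelling it.

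Second, your detour in case (b) through the H\"older seminorm of $\chi_\ell\psi$ is valid, since after $\zeta=\xi-\eta$ the kernel $\Gamma_\xi(\zeta,\xi-\zeta)$ has $\xi$-gradient $\lesssim 2^{-j}$. It is, however, unnecessary. The factor $|\xi-\eta|^{-1}\simeq 2^{-k}$ from Lemma \ref{lemm:Gamma}(4) is exactly offset by $\n{V_k}_{L^1}\simeq 2^k$. The paper therefore gets $2^j\n{\nabla E_j^{k,\ell}[\psi]}_{L^2}\lesssim\n{\chi_\ell\psi}_{L^2}$ in every case. In fact it never uses the H\"older part of $\n{\psi}_{X^q}$ in this proposition, contrary to your remark that this is where the $X^q$ structure is needed.
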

Before starting the proof of Proposition \ref{prop:E_N}, we prepare some lemmas.
\begin{lemm}\label{lemm:E-supp}
    Let $A=A_1 \sqcup A_2$, where
    \begin{align}
        A_1&:=\Mp{(j,k,\ell)\in \Z^3\ ;\ k=\ell,\ j \leq k+2},
        \\
        A_2&:=\Mp{(j,k,\ell)\in \Z^3\ ;\ k \neq \ell,\ \max\{k,\ell\}-5 \leq j \leq \max\{k,\ell\}+2}.
    \end{align}
    Then, it holds 
    \begin{align}
        \mathcal{E}_N[\psi]
        &
        =
        -
        \sum_{(j,k,\ell)\in A}
        \nu_{N,k}E_j^{k,\ell}[\psi].
    \end{align}
\end{lemm}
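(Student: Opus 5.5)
The plan is to show that every summand $E_j^{k,\ell}[\psi]$ with $(j,k,\ell)\notin A$ vanishes identically. Then the claimed identity follows at once from the definition of $\mathcal{E}_N$, and $A_1\cap A_2=\emptyset$ is trivial since $A_1$ requires $k=\ell$ and $A_2$ requires $k\neq\ell$. The proof is pure support bookkeeping in $\xi$, using the convolution structure of $\mathcal{H}_k$. The integrand of $\chi_j(\xi)\mathcal{H}_k[\chi_\ell\psi](\xi)$ is $\chi_j(\xi)V_k(\xi-\eta)\chi_\ell(\eta)\psi(\eta)$, and it can be nonzero only if all three of the following hold:
\begin{align}
    |\xi|\in[2^{j-\frac34},2^{j+\frac34}],
    \qquad
    |\xi-\eta|\in[2^{k-\frac18},2^{k+\frac18}],
    \qquad
    |\eta|\in[2^{\ell-\frac34},2^{\ell+\frac34}].
\end{align}
We combine these with the triangle inequalities $|\xi|\le|\xi-\eta|+|\eta|$ and $|\xi|\ge\bigl||\eta|-|\xi-\eta|\bigr|$. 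The subtracted term $\bb{1}_{\{j<k=\ell\}}\chi_j\widetilde{\mathcal{H}}_k[\psi]$ is present only when $k=\ell$ and $j<k$.

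\emph{Case $k=\ell$ and $j\ge k+3$.} The indicator term is absent because $j>k$. For the first term,
\begin{align}
    |\xi|\le 2^{k+\frac18}+2^{k+\frac34}<2^{k+2}<2^{k+\frac94}\le 2^{j-\frac34},
\end{align}
since $2^{1/8}+2^{3/4}\approx 2.77<4$. Hence $\chi_j(\xi)=0$ wherever the integrand lives, so $E_j^{k,k}=0$. Every remaining triple with $k=\ell$ has $j\le k+2$ and therefore lies in $A_1$.

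\emph{Case $k\ne\ell$.} The indicator term is absent, so only the support of $\chi_j\mathcal{H}_k[\chi_\ell\psi]$ matters. If $\ell\ge k+1$, then $|\xi-\eta|\le 2^{\ell-\frac78}$, and therefore
\begin{align}
    |\xi|\ge 2^{\ell}\bigl(2^{-\frac34}-2^{-\frac78}\bigr)>2^{\ell-5+\frac34},
    \qquad
    |\xi|\le 2^{\ell}\bigl(2^{\frac34}+2^{-\frac78}\bigr)<2^{\ell+2-\frac34}.
\end{align}
Indeed, the lower constant is about $0.05>2^{-4.25}$ and the upper constant is about $2.23<2^{1.25}$. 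Nonvanishing of $\chi_j(\xi)$ then forces $\ell-5\le j\le \ell+1$. If instead $k\ge\ell+1$, then $|\eta|\le 2^{k-\frac14}$ and $|\xi-\eta|\ge 2^{k-\frac18}$, and therefore
\begin{align}
    |\xi|\ge 2^k\bigl(2^{-\frac18}-2^{-\frac14}\bigr)>2^{k-4+\frac34},
    \qquad
    |\xi|\le 2^k\bigl(2^{\frac18}+2^{-\frac14}\bigr)<2^{k+1+\frac34},
\end{align}
which forces $k-4\le j\le k+2$. In both subcases we get $\max\{k,\ell\}-5\le j\le\max\{k,\ell\}+2$, so every nonvanishing triple with $k\neq\ell$ lies in $A_2$.

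The only point needing care is the arithmetic of the dyadic constants: we must check that the thin annulus $\supp\rho\subset[2^{-1/8},2^{1/8}]$ keeps the lower bounds on $|\xi|$ strictly positive, which is where the margins $2^{-3/4}$ versus $2^{-7/8}$ and $2^{-1/8}$ versus $2^{-1/4}$ enter. Finally, we note that the rearrangement of the triple sum is legitimate. For fixed $\xi$, only finitely many $j$ contribute. Moreover, for $\psi\in X^q_{\rm sym}$ the absolute convergence in $k$ and $\ell$ comes from the same bound $|\mathcal{H}_k[\chi_\ell\psi]|\lesssim |\xi|^{-1}\n{V_k}_{L^2}\n{\chi_\ell\psi}_{L^2}$ that is implicitly used in defining $\mathcal{E}_N$. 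So discarding the vanishing terms changes nothing.
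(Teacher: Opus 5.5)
Your strategy is the paper's: you show $E_j^{k,\ell}[\psi]\equiv 0$ for $(j,k,\ell)\notin A$ by intersecting the annuli $\supp\chi_j$, $\supp\rho_k$, $\supp\chi_\ell$ and applying the triangle inequality, with the same split into $k=\ell$, $k>\ell$, $\ell>k$. Your case $k=\ell$ is correct, and so are both upper bounds in the case $k\neq\ell$.

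The two lower bounds for $k\neq\ell$, however, rest on numerical comparisons that are false:
\begin{itemize}
\item $2^{-3/4}-2^{-7/8}\approx 0.0494$ is smaller than $2^{-17/4}\approx 0.0526$, so your check ``$0.05>2^{-4.25}$'' fails;
\item $2^{-1/8}-2^{-1/4}\approx 0.0761$ is smaller than $2^{-13/4}\approx 0.1051$.
\end{itemize}
Hence the displayed bounds $|\xi|>2^{\ell-5+\frac34}$ and $|\xi|>2^{k-4+\frac34}$ do not follow from the support information.

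The repair is immediate. The correct values give $|\xi|>2^{\ell-5}$ when $\ell>k$ and $|\xi|>2^{k-4}$ when $k>\ell$; the paper uses $2^{\ell-\frac34-4}$ and $2^{k-\frac18-4}$. Together with $|\xi|\le 2^{j+\frac34}$ on $\supp\chi_j$, these yield exactly $j\ge\ell-5$ and $j\ge k-4$. So the ranges you state, and therefore $(j,k,\ell)\in A_2$, are correct; only the two intermediate inequalities need replacing. This matters because, as you note yourself, the dyadic arithmetic is the one delicate point of this lemma.

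Your remark on rearranging the triple sum is fine but not really needed, since the lemma only discards terms that vanish identically.
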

\begin{proof}
Assume that $\xi \in \operatorname{supp} E_j^{k,\ell}[\psi]$.
We first consider the case of $k=\ell$ with $j \geq k$.
Then, since 
\begin{align}\label{E_neq_0}
    \frac{\chi_j(\xi)}{2\pi|\xi|}
    \int_{\R^2}
    \Gamma_\xi(\xi-\eta,\eta)
    V_k(\xi-\eta)
    \chi_\ell (\eta)\psi(\eta)\, d\eta \neq 0,
\end{align}
it holds 
\begin{align}
    2^{j-\frac{3}{4}} \leq |\xi| \leq 2^{j+\frac{3}{4}},
    \qquad
    2^{k-\frac{1}{8}} \leq | \xi-\eta | \leq 2^{k+\frac{1}{8}},
    \qquad
    2^{k-\frac{3}{4}} \leq | \eta | \leq 2^{k + \frac{3}{4}}.
\end{align}
Then, we have 
\begin{align}
    2^{j-\frac{3}{4}}
    \leq
    |\xi|
    \leq 
    |\xi-\eta|
    +
    |\eta|
    \leq 
    2^{k+\frac{1}{8}}
    +
    2^{k+\frac{3}{4}}
    \leq
    2^{k+\frac{7}{4}},
\end{align}
which implies $(j,k,\ell) \in A_1$.
Next, we consider the case of $k \neq \ell$.
By \eqref{E_neq_0}, we see that 
\begin{align}
    2^{j-\frac{3}{4}} \leq |\xi| \leq 2^{j+\frac{3}{4}},
    \qquad
    2^{k-\frac{1}{8}} \leq | \xi-\eta | \leq 2^{k+\frac{1}{8}},
    \qquad
    2^{\ell-\frac{3}{4}} \leq | \eta | \leq 2^{\ell + \frac{3}{4}}.
\end{align}
From this, it follows for $k>\ell$ that 
\begin{align}
    &
    2^{j+\frac{3}{4}} \geq |\xi| \geq |\xi-\eta| - |\eta| \geq 2^{k-\frac{1}{8}} - 2^{\ell + \frac{3}{4}}
    \geq 2^{k-\frac{1}{8}} - 2^{k-\frac{1}{4}}
    >
    2^{k-\frac{1}{8}-4},
    \\
    &
    2^{j-\frac{3}{4}}
    \leq
    |\xi| 
    \leq 
    |\xi-\eta| + |\eta|
    \leq 
    2^{k+\frac{1}{8}} + 2^{\ell + \frac{3}{4}}
    \leq 
    2^{k+\frac{1}{8}} + 2^{k - \frac{1}{4}}
    <
    2^{k+\frac{1}{8}+1},
\end{align}
where we have used $2^{-\frac{1}{8}}<15/16$.
When $\ell>k$, we see that 
\begin{align}
    &
    2^{j+\frac{3}{4}} \geq |\xi| \geq |\eta| - |\xi-\eta| \geq  2^{\ell - \frac{3}{4}} - 2^{k + \frac{1}{8}}
    \geq 2^{\ell-\frac{3}{4}} - 2^{\ell-\frac{7}{8}}
    >
    2^{\ell-\frac{3}{4}-4},
    \\
    &
    2^{j-\frac{3}{4}}
    \leq
    |\xi| 
    \leq 
    |\xi-\eta| + |\eta|
    \leq 
    2^{k+\frac{1}{8}} + 2^{\ell + \frac{3}{4}}
    \leq 
    2^{\ell-\frac{7}{8}} + 2^{\ell + \frac{3}{4}}
    <
    2^{\ell+\frac{3}{4}+1},
\end{align}
Thus, we have $(j,k,\ell) \in A_2$ and complete the proof.
\end{proof}

\begin{lemm}\label{lemm:E-est}
    There exists a positive constant $C$ such that 
    \begin{align}
        \n{E_j^{k,\ell}[\psi]}_{L^2} \leq C\delta_{j,k,\ell}\| \chi_\ell \psi \|_{L^2}
    \end{align}
    for all $(j,k,\ell) \in A$ and $\psi \in L^2_{\rm loc}(\R^2\setminus\{0\})$,
    where 
    \begin{align}
        \delta_{j,k,\ell}
        :=
        \begin{cases}
            2^{j-k} & (j,k,k) \in A_1, \\
            2^{-|k-\ell|}  &(j,k,\ell) \in A_2.
        \end{cases}
    \end{align}
\end{lemm}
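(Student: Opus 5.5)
We bound $E_j^{k,\ell}[\psi]$ pointwise in $\xi$ and then apply Schur's test or Cauchy--Schwarz in $\eta$. The case analysis follows the partition $A=A_1\sqcup A_2$ of Lemma \ref{lemm:E-supp}. Throughout we use the support information from that proof: on the support of the integrand, $|\xi|\simeq 2^j$, $|\xi-\eta|\simeq 2^k$ and $|\eta|\simeq 2^\ell$. We also use $|\Gamma_\xi(\xi-\eta,\eta)|\le 1$ from Lemma \ref{lemm:Gamma} (1) and $\|V_k\|_{L^1}\simeq 2^{k}$, $\|V_k\|_{L^2}=\|V_0\|_{L^2}$.

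\textbf{Case $A_2$} ($k\neq\ell$). Here the indicator vanishes, so $E_j^{k,\ell}[\psi]=\chi_j\mathcal H_k[\chi_\ell\psi]$, and $|\xi|\simeq 2^{\max\{k,\ell\}}$. Pointwise,
\begin{align}
|E_j^{k,\ell}[\psi](\xi)|\le \frac{C\chi_j(\xi)}{2^{\max\{k,\ell\}}}\,\bigl(|V_k|*|\chi_\ell\psi|\bigr)(\xi).
\end{align}
If $k<\ell$, Young's inequality $L^1\times L^2\to L^2$ gives the bound $C2^{-\ell}2^{k}\|\chi_\ell\psi\|_{L^2}=C2^{-|k-\ell|}\|\chi_\ell\psi\|_{L^2}$.

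If $k>\ell$, we put the $L^2$ norm on $V_k$ and integrate $\chi_\ell\psi$ in $L^1$. Cauchy--Schwarz gives $\|\chi_\ell\psi\|_{L^1}\le C2^{\ell}\|\chi_\ell\psi\|_{L^2}$. Hence
\begin{align}
\|E_j^{k,\ell}[\psi]\|_{L^2}\le C2^{-k}\|V_k\|_{L^2}\,2^{\ell}\|\chi_\ell\psi\|_{L^2}=C2^{\ell-k}\|\chi_\ell\psi\|_{L^2}.
\end{align}
This settles $A_2$.

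\textbf{Case $A_1$ with $j\ge k-2$.} Here $k=\ell$ and $k-2\le j\le k+2$, so only boundedly many $j$ occur for each $k$, and $\delta_{j,k,k}=2^{j-k}\simeq1$. It suffices to bound each of the two terms separately.

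For the $\mathcal H_k$ term, Young's inequality gives $C2^{-j}\cdot2^{k}\|\chi_k\psi\|_{L^2}$, which is $\le C\|\chi_k\psi\|_{L^2}$.

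For the $\widetilde{\mathcal H}_k$ term (present only when $j<k$), we have
\begin{align}
\chi_j\widetilde{\mathcal H}_k[\psi]=W_j\,\kappa_k[\psi]
\end{align}
by Lemma \ref{lemm:k}. We bound this by $\|W_0\|_{L^2}\|V_0\|_{L^2}\|\chi_k\psi\|_{L^2}$, since $\kappa_k[\psi]$ only sees $\chi_k\psi$ because $\rho_k=\chi_k\rho_k$.

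\textbf{Main case: $A_1$ with $j<k-2$.} This is the high$\times$high$\to$low regime. Here $E_j^{k,k}[\psi]=\chi_j(\mathcal H_k[\chi_k\psi]-\widetilde{\mathcal H}_k[\psi])$. Two reductions put the difference in a form where Lemma \ref{lemm:Gamma} (3) applies. First, $\widetilde{\mathcal H}_k[\psi]=\widetilde{\mathcal H}_k[\chi_k\psi]$, again because $\rho_k=\chi_k\rho_k$. Second, we must compare $V_k(\xi-\eta)$ with $V_k(\eta)$, since $\mathcal H_k$ carries the factor $V_k(\xi-\eta)$ while $\widetilde{\mathcal H}_k$ carries $V_k(\eta)$.

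We first need the sign relation $V_k(-\eta)=V_k(\eta)$. This holds because $\sin(4\theta)$ is $\pi$-periodic. We then write
\begin{align}
\Gamma_\xi(\xi-\eta,\eta)V_k(\xi-\eta)-\gamma(\xi,\eta)V_k(\eta)
={}&\bigl(\Gamma_\xi(\xi-\eta,\eta)-\gamma(\xi,\eta)\bigr)V_k(\xi-\eta)\\
&+\gamma(\xi,\eta)\bigl(V_k(\eta-\xi)-V_k(\eta)\bigr).
\end{align}
Because $|\xi|/|\eta|\le 2^{j-k+2}$ is bounded by some $\delta<1$, both pieces are available:

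\begin{itemize}
\item The first factor is $O(|\xi|/|\eta|)=O(2^{j-k})$ by Lemma \ref{lemm:Gamma} (3).
\item The mean value theorem gives $|V_k(\eta-\xi)-V_k(\eta)|\le |\xi|\,\|\nabla V_k\|_{L^\infty}\le C2^{j-k}\cdot2^{-k}$, using the scaling $\|\nabla V_k\|_{L^\infty}=C2^{-2k}$ and $|\xi|\simeq2^j$.
\end{itemize}

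Each piece is supported in $|\eta|\simeq2^k$. Hence the integrand is bounded by $C2^{j-k}2^{-k}|\chi_k\psi(\eta)|\mathbf 1_{|\eta|\simeq2^k}$.

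Integrating in $\eta$ with Cauchy--Schwarz gives at most $C2^{j-k}\|\chi_k\psi\|_{L^2}$. Multiplying by the prefactor $|\xi|^{-1}\simeq2^{-j}$ gives a pointwise bound $C2^{-k}\|\chi_k\psi\|_{L^2}$ on the annulus $|\xi|\simeq2^j$. Taking the $L^2$ norm over that annulus costs a factor $2^{j}$, so
\begin{align}
\|E_j^{k,k}[\psi]\|_{L^2}\le C2^{j-k}\|\chi_k\psi\|_{L^2}.
\end{align}

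The step needing the most care is this main case, in particular verifying that the near-cancellation survives the replacement of $V_k(\xi-\eta)$ by $V_k(\eta)$. The evenness of $V_k$ is what makes this possible. Everything else is bookkeeping of the dyadic scales.
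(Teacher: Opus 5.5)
Your proof is correct and is essentially the paper's argument. It uses the same case split of $A$, the same decomposition $(\Gamma_\xi(\xi-\eta,\eta)-\gamma(\xi,\eta))V_k(\xi-\eta)+\gamma(\xi,\eta)\bigl(V_k(\xi-\eta)-V_k(-\eta)\bigr)$ via the evenness of $V_k$ in the high$\times$high$\to$low case, and Young's inequality with the $L^1$/$L^2$ norms split between $V_k$ and $\chi_\ell\psi$ in $A_2$; the differences are cosmetic (e.g.\ your $L^\infty$ bounds on $V_k,\nabla V_k$ plus Cauchy--Schwarz over the annulus instead of $\|V_k\|_{L^2}$ and $\|V_k(\cdot-\xi)-V_k\|_{L^2}\le|\xi|\|\nabla V_k\|_{L^2}$). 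One small caveat is that citing Lemma \ref{lemm:k} for $\chi_j\widetilde{\mathcal{H}}_k[\psi]=W_j\kappa_k[\psi]$ requires $\psi\in L^2_{\rm sym}$, whereas the statement allows any $\psi\in L^2_{\rm loc}(\R^2\setminus\{0\})$, so there you should instead bound $|\gamma|\le 1$ and apply Cauchy--Schwarz with $\|V_k\|_{L^2}$, as the paper does.
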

\begin{proof}
We first consider the case of $(j,k,k) \in A_1$ with $j \leq k-3$.
Since
\begin{align}\label{Ekk}
    E_j^{k,k}[\psi](\xi)
    ={}&
    \frac{\chi_j(\xi)}{2\pi|\xi|}
    \int_{\R^2}
    \sp{\Gamma_\xi(\xi-\eta,\eta)-\gamma(\xi,\eta)}
    V_k(\xi-\eta)
    \chi_k(\eta)\psi(\eta)
    \, d\eta
    \\
    &
    +
    \frac{\chi_j(\xi)}{2\pi|\xi|}
    \int_{\R^2}
    \gamma(\xi,\eta)
    \sp{V_k(\xi-\eta)-V_k(-\eta)}
    \chi_k(\eta)\psi(\eta)
    \, d\eta
\end{align}
and $|\xi|/|\eta| \leq 2^{j+\frac{3}{4}}/2^{k-\frac{3}{4}}=2^{j-k+\frac{3}{2}}\leq 2^{-\frac{3}{2}}<1$, 
we see by Lemma \ref{lemm:Gamma} that
\begin{align}
    \abso{E_j^{k,k}[\psi](\xi)}
    \leq{}& 
    \frac{\chi_j(\xi)}{2\pi|\xi|}
    \int_{\R^2}
    \abso{\Gamma_\xi(\xi-\eta,\eta)-\gamma(\xi,\eta)}
    |V_k(\xi-\eta)|
    |\chi_k(\eta)\psi(\eta)|
    \, d\eta
    \\
    &
    +
    \frac{\chi_j(\xi)}{2\pi|\xi|}
    \int_{\R^2}
    |V_k(\xi-\eta)-V_k(-\eta)|
    |\chi_k(\eta)\psi(\eta)|
    \, d\eta
    \\
    \leq{}& 
    C
    \frac{\chi_j(\xi)}{|\xi|}
    \int_{\R^2}
    \frac{|\xi|}{|\eta|}
    |V_k(\xi-\eta)|
    |\chi_k(\eta)\psi(\eta)|
    \, d\eta
    \\
    &
    +
    C
    \frac{\chi_j(\xi)}{|\xi|}
    \n{V_k(\cdot-\xi)-V_k(\cdot)}_{L^2}
    \n{\chi_k\psi}_{L^2}
    \\
    \leq{}& 
    C2^{-k}\chi_j(\xi)\n{V_k}_{L^2}\n{\chi_k\psi}_{L^2}
    +
    C\chi_j(\xi)\n{\nabla V_k}_{L^2}\n{\chi_k\psi}_{L^2}
    \\
    \leq{}& 
    C2^{-k}\chi_j(\xi)\n{\chi_k\psi}_{L^2}.
\end{align}
Taking $L^2$-norm and using $\| \chi_j \|_{L^2} \leq C 2^j$, we obtain
\begin{align}
    \n{E_j^{k,k}[\psi]}_{L^2}\leq C2^{j-k}\n{\chi_k\psi}_{L^2}.
\end{align}
For the case of $(j,k,k) \in A_1$ with $k-2 \leq j \leq k+2$, 
we see that 
\begin{align}
    &
    \abso{\chi_j(\xi)\mathcal{H}_k[\chi_k\psi](\xi)}
    +
    |\chi_j(\xi)\widetilde{\mathcal{H}}_k[\psi](\xi)|
    \\
    &\quad 
    \leq 
    C\frac{\chi_j(\xi)}{|\xi|}\int_{\R^2}|V_k(\xi-\eta)||\chi_\ell(\eta) \psi(\eta)|\, d\eta
    +
    C\frac{\chi_j(\xi)}{|\xi|}\int_{\R^2}|V_k(-\eta)||\chi_\ell(\eta) \psi(\eta)|\, d\eta
    \\
    &\quad 
    \leq 
    C\frac{\chi_j(\xi)}{|\xi|}\n{V_k}_{L^2}\n{\chi_k\psi}_{L^2}
    \leq 
    C\frac{\chi_j(\xi)}{|\xi|}\n{\chi_k\psi}_{L^2},
\end{align}
which implies
\begin{align}
    \n{E_j^{k,k}[\psi]}_{L^2}\leq C\n{\chi_k\psi}_{L^2}.
\end{align}

Next, we consider the case of $(j,k,\ell) \in A_2$.
As we see that 
\begin{align}
    \abso{E_j^{k,\ell}[\psi](\xi)}
    \leq{}& 
    C2^{-\max\{k,\ell\}}
    \int_{\R^2}
    |V_k(\xi-\eta)|
    |\chi_\ell(\eta)\psi(\eta)|
    \, d\eta,
\end{align}
the Hausdorff--Young inequality yields
\begin{align}
    \n{E_j^{k,\ell}[\psi]}_{L^2}
    &\leq 
    \begin{cases}
        C2^{-k}\n{V_k}_{L^2}\n{\chi_\ell\psi}_{L^1} & (\ell<k), \\
        C2^{-\ell}\n{V_k}_{L^1}\n{\chi_\ell\psi}_{L^2} & (\ell>k)
    \end{cases}
    \\
    &\leq 
    \begin{cases}
        C2^{-k}|\supp \chi_\ell|^{\frac{1}{2}}
        \n{\chi_\ell\psi}_{L^2} & (\ell<k), \\
        C2^{-\ell}2^k\n{\chi_\ell \psi}_{L^2} & (\ell>k)
    \end{cases}
    \\
    &\leq 
    C2^{-|k-\ell|}\n{\chi_\ell \psi}_{L^2}.
\end{align}
Thus, we complete the proof.
\end{proof}
\begin{lemm}\label{lemm:nabla-E-est}
    There exists a positive constant $C$ such that 
    \begin{align}
        2^j
        \n{\nabla E_j^{k,\ell}[\psi]}_{L^2} \leq C\| \chi_\ell \psi \|_{L^2}
    \end{align}
    for all $(j,k,\ell) \in A$ and $\psi \in L^2_{\rm loc}(\R^2\setminus\{0\})$.
\end{lemm}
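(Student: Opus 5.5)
We differentiate $E_j^{k,\ell}[\psi](\xi)$ in $\xi$ under the integral sign, using the same case split as in Lemma \ref{lemm:E-est}. Write
\begin{align}
    \chi_j(\xi)\mathcal{H}_k[\chi_\ell\psi](\xi)
    =
    \frac{\chi_j(\xi)}{2\pi|\xi|}\int_{\R^2}\Gamma_\xi(\xi-\eta,\eta)V_k(\xi-\eta)\chi_\ell(\eta)\psi(\eta)\,d\eta .
\end{align}
The $\xi$-gradient can fall on three factors:
\begin{itemize}
    \item on the prefactor $\chi_j(\xi)/|\xi|$, which costs a factor $C2^{-j}$;
    \item on $V_k(\xi-\eta)$, which costs $\nabla V_k$, with $\n{\nabla V_k}_{L^p}=2^{-k}2^{(2/p-1)j'}$-type scaling;
    \item on the kernel $\Gamma_\xi(\xi-\eta,\eta)$, which is controlled by Lemma \ref{lemm:Gamma} (4).
\end{itemize}
The target in each case is a pointwise bound $|\nabla E_j^{k,\ell}[\psi](\xi)|\le C2^{-j}\mathbf 1_{\supp\chi_j}(\xi)\cdot(\text{size})$. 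The claim then follows after taking $L^2$ norms, with $\|\mathbf 1_{\supp\chi_j}\|_{L^2}\simeq 2^j$.

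\textbf{Case $(j,k,k)\in A_1$ with $j\le k-3$.} Use the decomposition \eqref{Ekk}. For the first piece, $\nabla_\xi(\Gamma-\gamma)$ is bounded by $C/|\eta|\le C2^{-k}$ (Lemma \ref{lemm:Gamma} (3)), and $|\Gamma-\gamma|\le C2^{j-k}$. For the second piece, $|\nabla_\xi\gamma|\le C/|\xi|\simeq 2^{-j}$ (Lemma \ref{lemm:Gamma} (4)), and the difference $V_k(\xi-\eta)-V_k(-\eta)$ has $L^2_\eta$ norm at most $|\xi|\n{\nabla V_k}_{L^2}\le C2^{j-k}$. A gradient falling on $V_k(\xi-\eta)$ gives $\n{\nabla V_k}_{L^2}\le C2^{-k}$. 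Combining these via Cauchy--Schwarz in $\eta$, every term is at most $C2^{-j}\chi$-localized times $\n{\chi_k\psi}_{L^2}$; since $j\le k$ the bound is even better. This yields $\n{\nabla E}_{L^2}\le C2^{-j}\n{\chi_k\psi}_{L^2}$, i.e., the statement. In the $\widetilde{\mathcal H}$ term, the $\eta$-integral no longer depends on $\xi$, so only $\nabla(\chi_j\cos(2\theta_\xi)/|\xi|)$ appears, which is $O(2^{-2j})$ on an annulus of area $2^{2j}$.

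\textbf{Case $(j,k,k)\in A_1$ with $|j-k|\le2$.} All scales are comparable. Lemma \ref{lemm:Gamma} (4) would give $|\nabla_\xi\Gamma|\lesssim 2^{-k}$, except that the term $\big||\xi-\eta|^2-|\eta|^2\big|/(|\xi|^2|\xi-\eta|)$ is harmless only because $|\xi-\eta|\simeq2^k$ on $\supp V_k$. So all terms are $O(2^{-k})$ times $\int|V_k||\chi_k\psi|$ or $\int|\nabla V_k||\chi_k\psi|$. Cauchy--Schwarz with $\n{V_k}_{L^2},2^k\n{\nabla V_k}_{L^2}\le C$ then gives the claim.

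\textbf{Case $A_2$.} Here $|\xi|\simeq 2^{\max\{k,\ell\}}$ and $|\xi-\eta|\simeq2^k$. I expect this to be the main obstacle: when $\ell>k$, the factor $1/|\xi-\eta|$ in Lemma \ref{lemm:Gamma} (4) multiplies $\big||\xi-\eta|^2-|\eta|^2\big|/|\xi|^2=O(1)$, so $|\nabla_\xi\Gamma|\lesssim 2^{-k}$ rather than $2^{-j}$. The remedy is that $V_k$ is supported on $|\xi-\eta|\simeq 2^k$: a convolution against $2^{-k}|V_k|$ in $L^1$ gives $2^{-k}\n{V_k}_{L^1}\simeq 2^{-k}2^{k}=1$. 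Since there is no $|\xi|^{-1}$ loss beyond the prefactor, we get $|\nabla E|\lesssim 2^{-j}\int 2^{-k}|V_k(\xi-\eta)||\chi_\ell\psi|$, which Young's inequality bounds in $L^2$ by $C2^{-j}\n{\chi_\ell\psi}_{L^2}$. The gradient of $V_k$ is treated identically, using $\n{\nabla V_k}_{L^1}\le C$.

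When $\ell<k$, the bound $|\nabla_\xi\Gamma|\lesssim2^{-k}\simeq 2^{-j}$ holds directly. Young's inequality in the form $L^2*L^1$ with $\n{\chi_\ell\psi}_{L^1}\le C2^{\ell}\n{\chi_\ell\psi}_{L^2}$ and $\n{V_k}_{L^2}\le C$ gives $C2^{-j}2^{\ell-k}\n{\chi_\ell\psi}_{L^2}$, and the same holds with $\n{\nabla V_k}_{L^2}\le C2^{-k}$. Multiplying by $2^j$ finishes every case.
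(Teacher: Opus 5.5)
Your proposal is correct and follows essentially the paper's argument: the same case split, the cancellation decomposition \eqref{Ekk} with Lemma \ref{lemm:Gamma}~(3)--(4) when $j\le k-3$, and, in $A_2$, compensating $|\nabla_\xi\Gamma_\xi(\xi-\eta,\eta)|\lesssim 2^{-k}$ by $\n{V_k}_{L^1}\simeq 2^k$ via Young's inequality; your sharper bound $2^{-k}$ (instead of the paper's $2^{-\min\{k,\ell\}}$) when $\ell<k$ is a harmless refinement. A few summary power counts (``every term is at most $C2^{-j}$\dots'', ``all terms are $O(2^{-k})$ times\dots'') drop a factor as literally written, but the individual term bounds you derive are the right ones, namely pointwise $\lesssim 2^{-2j}\n{\chi_\ell\psi}_{L^2}$ on $\supp\chi_j$ or the corresponding convolution bound, which is exactly what the claim needs.
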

\begin{proof}
We first consider the case of $(j,k,k) \in A_1$ with $j \leq k-3$.
Differentiating \eqref{Ekk}, we have 
\begin{align}\label{DEkk}
    \nabla E_j^{k,k}[\psi](\xi)
    ={}&
    \nabla \sp{\frac{\chi_j(\xi)}{2\pi|\xi|}}
    \int_{\R^2}
    \sp{\Gamma_\xi(\xi-\eta,\eta)-\gamma(\xi,\eta)}
    V_k(\xi-\eta)
    \chi_k(\eta)\psi(\eta)
    \, d\eta
    \\
    &
    +
    \frac{\chi_j(\xi)}{2\pi|\xi|}
    \int_{\R^2}
    \nabla_\xi\sp{\Gamma_\xi(\xi-\eta,\eta)-\gamma(\xi,\eta)}
    V_k(\xi-\eta)
    \chi_k(\eta)\psi(\eta)
    \, d\eta
    \\
    &
    +
    \frac{\chi_j(\xi)}{2\pi|\xi|}
    \int_{\R^2}
    \Gamma_\xi(\xi-\eta,\eta)
    (\nabla V_k)(\xi-\eta)
    \chi_k(\eta)\psi(\eta)
    \, d\eta
    \\
    &
    +
    \nabla
    \sp{\frac{\chi_j(\xi)}{2\pi|\xi|}}
    \int_{\R^2}
    \gamma(\xi,\eta)
    \sp{V_k(\xi-\eta)-V_k(-\eta)}
    \chi_k(\eta)\psi(\eta)
    \, d\eta
    \\
    &
    +
    \frac{\chi_j(\xi)}{2\pi|\xi|}
    \int_{\R^2}
    \nabla_\xi\gamma(\xi,\eta)
    \sp{V_k(\xi-\eta)-V_k(-\eta)}
    \chi_k(\eta)\psi(\eta)
    \, d\eta
    \\
    =:{}&
    E_{j,1}^{k,k}[\psi](\xi)+E_{j,2}^{k,k}[\psi](\xi)+E_{j,3}^{k,k}[\psi](\xi)+E_{j,4}^{k,k}[\psi](\xi)+E_{j,5}^{k,k}[\psi](\xi).
\end{align}
Let us consider the estimate of $E_{j,1}^{k,k}[\psi]$.
It follows from Lemma \ref{lemm:Gamma} and 
\begin{align}\label{D-chi}
    \abso{\nabla \sp{\frac{\chi_j(\xi)}{|\xi|}}}
    =
    \abso{\frac{2^{-j}(\nabla \chi_0)(2^{-j}\xi)}{|\xi|}
    -
    \frac{\chi_0(2^{-j}\xi)\xi}{|\xi|^3}}
    \leq 
    C2^{-2j}\bb{1}_{\supp \chi_j}(\xi)
\end{align}
that 
\begin{align}
    \abso{E_{j,1}^{k,k}[\psi](\xi)}
    &\leq 
    \abso{\nabla \sp{\frac{\chi_j(\xi)}{|\xi|}}}
    \int_{\R^2}
    \abso{\Gamma_\xi(\xi-\eta,\eta)-\gamma(\xi,\eta)}
    |V_k(\xi-\eta)|
    |\chi_k(\eta)\psi(\eta)|
    \, d\eta
    \\
    &
    \leq 
    C2^{-2j}
    \int_{\R^2}
    \frac{|\xi|}{|\eta|}
    |V_k(\xi-\eta)|
    |\chi_k(\eta)\psi(\eta)|
    \, d\eta
    \\
    &
    \leq 
    C2^{-j-k}
    \int_{\R^2}
    |V_k(\xi-\eta)|
    |\chi_k(\eta)\psi(\eta)|
    \, d\eta.
\end{align}
Thus, the Hausdorff--Young inequality implies 
\begin{align}\label{est:E1}
    2^j
    \n{E_{j,1}^{k,k}[\psi]}_{L^2}
    \leq 
    C
    2^{-k}
    \n{V_k}_{L^1}
    \n{\chi_k \psi}_{L^2}
    \leq 
    C
    \n{\chi_k \psi}_{L^2}.
\end{align}
For the estimate of $E_{j,2}^{k,k}[\psi]$,
we have from Lemma \ref{lemm:Gamma} that 
\begin{align}
    \abso{E_{j,2}^{k,k}[\psi](\xi)}
    &
    \leq 
    \frac{\chi_j(\xi)}{2\pi|\xi|}
    \int_{\R^2}
    |\nabla_\xi\sp{\Gamma_\xi(\xi-\eta,\eta)-\gamma(\xi,\eta)}|
    |V_k(\xi-\eta)|
    |\chi_k(\eta)\psi(\eta)|
    \, d\eta
    \\
    &
    \leq 
    C
    \frac{\chi_j(\xi)}{|\xi|}
    \int_{\R^2}
    \frac{1}{|\eta|}
    |V_k(\xi-\eta)|
    |\chi_k(\eta)\psi(\eta)|
    \, d\eta
    \\
    &
    \leq 
    C
    2^{-j-k}
    \chi_j(\xi)
    \|V_k\|_{L^2}
    \|\chi_k\psi\|_{L^2},
\end{align}
which implies 
\begin{align}\label{est:E2}
    2^j\n{E_{j,2}^{k,k}[\psi]}_{L^2}
    \leq 
    C2^{j-k}
    \|\chi_k\psi\|_{L^2}
    \leq 
    C
    \|\chi_k\psi\|_{L^2}.
\end{align}
On the estimate of $E_{j,3}^{k,k}[\psi]$, as 
\begin{align}
    \abso{E_{j,3}^{k,k}[\psi](\xi)}
    &
    \leq 
    C2^{-j}\chi_j(\xi)
    \int_{\R^2}|\nabla V_k(\xi-\eta)||\chi_k(\eta)\psi(\eta)|
    \, d\eta
    \\
    &
    \leq 
    C2^{-j}\chi_j(\xi)
    \|\nabla V_k\|_{L^2}
    \|\chi_k\psi\|_{L^2}
    \leq 
    C2^{-j-k}\chi_j(\xi)
    \|\chi_k\psi\|_{L^2},
\end{align}
we see that 
\begin{align}\label{est:E3}
    2^j
    \n{E_{j,3}^{k,k}[\psi]}_{L^2}
    \leq C2^{j-k}\n{\chi_k \psi}_{L^2}
    \leq C\n{\chi_k \psi}_{L^2}.
\end{align}
For $E_{j,4}^{k,k}[\psi]$, we have 
\begin{align}
    \abso{E_{j,4}^{k,k}[\psi](\xi)}
    &
    \leq 
    C2^{-2j}
    \bb{1}_{\supp \chi_j}(\xi)
    \| V_k(\cdot - \xi) - V_k(\cdot) \|_{L^2}
    \| \chi_k \psi \|_{L^2}
    \\
    &
    \leq 
    C2^{-2j}
    \bb{1}_{\supp \chi_j}(\xi)
    |\xi|
    \| \nabla V_k \|_{L^2}
    \| \chi_k \psi \|_{L^2} 
    \leq 
    C2^{-j-k}
    \bb{1}_{\supp \chi_j}(\xi)
    \| \chi_k \psi \|_{L^2},
\end{align}
which yields 
\begin{align}\label{est:E4}
    2^j\n{E_{j,4}^{k,k}[\psi]}_{L^2}
    \leq 
    C2^{j-k}\| \chi_k \psi \|_{L^2}
    \leq 
    C\| \chi_k \psi \|_{L^2}.
\end{align}
We consider the estimate of $E_{j,5}^{k,k}[\psi]$.
By Lemma \ref{lemm:Gamma}, we have 
\begin{align}
    \abso{E_{j,5}^{k,k}[\psi](\xi)}
    &
    \leq 
    C\frac{\chi_j(\xi)}{|\xi|^2}
    \n{V_k(\cdot-\xi)-V_k(\cdot)}_{L^2}
    \n{\chi_k\psi}_{L^2}
    \\
    &
    \leq 
    C
    \frac{\chi_j(\xi)}{|\xi|}
    \n{\nabla V_k}_{L^2}
    \n{\chi_k\psi}_{L^2}
    \\
    &\leq 
    C
    2^{-j-k}
    \chi_j(\xi)
    \n{\chi_k\psi}_{L^2}.
\end{align}
Thus, we have 
\begin{align}\label{est:E5}
    2^j
    \n{E_{j,5}^{k,k}[\psi]}_{L^2}
    \leq 
    C2^{j-k}
    \n{\chi_k\psi}_{L^2}
    \leq 
    C\n{\chi_k\psi}_{L^2}.
\end{align}
Combining \eqref{est:E1}, \eqref{est:E2}, \eqref{est:E3}, \eqref{est:E4}, and \eqref{est:E5}, we obtain 
\begin{align}
    2^j
    \n{ \nabla E_{j}^{k,k}[\psi]}_{L^2}
    \leq 
    C\n{\chi_k\psi}_{L^2}.
\end{align}

Next, we consider the case of $(j,k,k) \in A_1$ with $k-2 \leq j \leq k+2$.
We see that 
\begin{align}
    \nabla E_j^{k,k}[\psi](\xi)
    ={}&
    \nabla\sp{\frac{\chi_j(\xi)}{2\pi|\xi|}}
    \int_{\R^2}
    \Gamma_\xi (\xi-\eta,\eta)
    V_k(\xi-\eta)\chi_k(\eta)\psi(\eta)\, d\eta
    \\
    &
    +
    \frac{\chi_j(\xi)}{2\pi|\xi|}
    \int_{\R^2}
    \nabla_\xi (\Gamma_\xi (\xi-\eta,\eta))
    V_k(\xi-\eta)\chi_k(\eta)\psi(\eta)\, d\eta
    \\
    &
    +
    \frac{\chi_j(\xi)}{2\pi|\xi|}
    \int_{\R^2}
    \Gamma_\xi (\xi-\eta,\eta)
    \nabla_\xi V_k(\xi-\eta)\chi_k(\eta)\psi(\eta)\, d\eta
    \\
    &
    -
    \bb{1}_{j<k}(j,k)
    \nabla W_j(\xi)
    \kappa_k[\psi].
\end{align}
Since $|\nabla_\xi \Gamma_\xi(\xi-\eta,\eta)| \leq C2^{-j}$ holds by Lemma \ref{lemm:Gamma}, 
it holds
\begin{align}
    \abso{\nabla E_j^{k,k}[\psi](\xi)}
    \leq {}&
    C
    2^{-2j}
    \bb{1}_{\supp \chi_j}(\xi)
    \| V_k \|_{L^2}
    \| \chi_k\psi \|_{L^2}
    \\
    &
    +
    C\frac{\chi_j(\xi)}{|\xi|}
    \| \nabla V_k \|_{L^2}
    \| \chi_k \psi \|_{L^2}
    +
    C
    | \nabla W_j(\xi) |
    \| V_k \|_{L^2}
    \| \chi_k \psi \|_{L^2}.
\end{align}
Taking $L^2$-norm, we have 
\begin{align}
    2^j\n{\nabla E_j^{k,k}[\psi]}_{L^2}
    \leq 
    C\| \chi_k\psi \|_{L^2}.
\end{align}

Finally, we focus on the case of $(j,k,\ell) \in A_2$.
We see that 
\begin{align}
    \nabla E_j^{k,\ell}[\psi](\xi)
    ={}&
    \nabla\sp{\frac{\chi_j(\xi)}{2\pi|\xi|}}
    \int_{\R^2}
    \Gamma_\xi (\xi-\eta,\eta)
    V_k(\xi-\eta)\chi_\ell (\eta)\psi(\eta)\, d\eta
    \\
    &
    +
    \frac{\chi_j(\xi)}{2\pi|\xi|}
    \int_{\R^2}
    \nabla_\xi (\Gamma_\xi (\xi-\eta,\eta))
    V_k(\xi-\eta)\chi_\ell (\eta)\psi(\eta)\, d\eta
    \\
    &
    +
    \frac{\chi_j(\xi)}{2\pi|\xi|}
    \int_{\R^2}
    \Gamma_\xi (\xi-\eta,\eta)
    \nabla_\xi V_k(\xi-\eta)\chi_\ell (\eta)\psi(\eta)\, d\eta
\end{align}
It follows from Lemma \ref{lemm:Gamma} and $|\xi| \simeq 2^j \simeq 2^{\max \{k,\ell \}} \simeq \max \{ |\xi-\eta|,|\eta|\}$ that 
\begin{align}
    |\nabla_\xi \Gamma_\xi(\xi-\eta,\eta)| 
    &
    \leq C2^{k-2j}+C(2^{-j}+2^{-k}) \\
    &
    \leq C2^{-\min \{k,\ell\}}.
\end{align}
Thus, we have 
\begin{align}
    \abso{\nabla E_j^{k,\ell}[\psi](\xi)}
    \leq{}& 
    C2^{-2j}\bb{1}_{\supp \chi_j}(\xi)\n{V_k}_{L^2}\n{\chi_\ell\psi}_{L^2}
    \\
    &
    +
    C
    2^{-j}
    2^{-\min\{k,\ell\}}
    \int_{\R^2}
    |V_k(\xi-\eta)||\chi_\ell(\eta)\psi(\eta)|\, d\eta 
    \\
    &
    +
    C
    2^{-j}
    \chi_j(\xi)
    \int_{\R^2}
    |\nabla V_k(\xi-\eta)||\chi_\ell(\eta) \psi(\eta)| 
    \, d\eta.
\end{align}
Taking $L^2$-norm and using 
\begin{align}
    &
    \begin{aligned}
    \n{\int_{\R^2}
    |V_k(\xi-\eta)||\chi_\ell(\eta)\psi(\eta)|\, d\eta}_{L^2_\xi}
    &
    \leq
    \min \Mp{\n{V_k}_{L^1}\n{\chi_\ell \psi}_{L^2},\n{V_k}_{L^2}\n{\chi_\ell \psi}_{L^1}}
    \\
    &\leq 
    C\min \Mp{2^k\n{\chi_\ell \psi}_{L^2},|\supp \chi_\ell|^{\frac{1}{2}}\n{\chi_\ell \psi}_{L^2}}
    \\
    &\leq 
    C
    2^{\min \{ k,\ell \}}
    \n{\chi_\ell \psi}_{L^2},
    \end{aligned}
    \\
    &
    \begin{aligned}
    \n{\int_{\R^2}
    |\nabla V_k(\xi-\eta)||\chi_\ell(\eta) \psi(\eta)| 
    \, d\eta}_{L^2_\xi}
    \leq 
    C\n{ \nabla V_k}_{L^1}\n{\chi_\ell \psi}_{L^2}
    \leq 
    C\n{\chi_\ell \psi}_{L^2},
    \end{aligned}
\end{align}
we obtain 
\begin{align}
    2^j\n{\nabla E_j^{k,\ell}[\psi]}_{L^2}
    \leq 
    C\n{\chi_\ell \psi}_{L^2}.
\end{align}
Thus, we complete the proof.
\end{proof}
Now, let us prove Proposition \ref{prop:E_N}.
\begin{proof}[Proof of Proposition \ref{prop:E_N}]
We first consider the $L^2$-estimate of $\chi_j\mathcal{E}_N[\psi]$.
By $|\nu_{N,k}| \leq 1/N$, the almost orthogonality of $\{\chi_j \}_{j \in \Z}$, and 
Lemmas \ref{lemm:E-supp} and \ref{lemm:E-est}, it holds
\begin{align}
    \n{\chi_j \mathcal{E}_N[\psi]}_{L^2}
    \leq {}&
    \frac{C}{N}
    \sum_{|j'-j| \leq 1}
    \sum_{(j',k,\ell) \in A}
    \n{E_{j'}^{k,\ell}[\psi]}_{L^2}
    \\
    \leq {}&
    \frac{C}{N}
    \sum_{|j'-j| \leq 1}
    \sum_{j' \leq k+2}
    2^{j'-k}
    \n{\chi_k\psi}_{L^2}
    \\
    &
    +
    \frac{C}{N}
    \sum_{|j'-j| \leq 1}
    \sum_{k-5 \leq j' \leq k+2}
    \sum_{\ell < k}
    2^{\ell-k}
    \n{\chi_\ell\psi}_{L^2}
    \\
    &
    +
    \frac{C}{N}
    \sum_{|j'-j| \leq 1}
    \sum_{\ell-5 \leq j' \leq \ell+2}
    \sum_{k < \ell}
    2^{k - \ell}
    \n{\chi_\ell\psi}_{L^2}.
\end{align}
Taking $\ell^q$-norm with respect to $j \in \Z$ and using the Hausdorff--Young inequality for the discrete convolution, we have 
\begin{align}
    \sp{\sum_{j\in \Z}\n{\chi_j\mathcal{E}_N[\psi]}_{L^2}^q}^{\frac{1}{q}} \leq \frac{C}{N} \n{\psi}_{X^q}.
\end{align}

Next, we consider the estimate for the difference of $\chi_j\mathcal{E}_N[\psi]$.
It follows from Lemmas \ref{lemm:E-est} and \ref{lemm:nabla-E-est} that
\begin{align}
    \n{E_j^{k,\ell}[\psi](\cdot+\eta) - E_j^{k,\ell}[\psi](\cdot)}_{L^2}
    \leq{}&
    C
    \min 
    \Mp{2\n{E_j^{k,\ell}[\psi]}_{L^2}, |\eta| \n{\nabla E_j^{k,\ell}[\psi]}_{L^2}}
    \\
    \leq{}&
    C\n{\chi_\ell \psi}_{L^2}
    \min \Mp{\delta_{j,k,\ell},2^{-j}|\eta|}
    \\
    \leq{}&
    C
    \sp{\delta_{j,k,\ell}}^{1/2}
    \n{\chi_\ell \psi}_{L^2}
    \sp{\min \Mp{1,2^{-j}|\eta|}}^{1/2},
\end{align}
which implies 
\begin{align}
    &
    \n{
    \sp{\chi_jE_{j'}^{k,\ell}[\psi]}(\cdot+\eta)-\sp{\chi_jE_{j'}^{k,\ell}[\psi]}(\cdot)
    }_{L^2}
    \\
    &\quad 
    \leq 
    \n{\chi_j(\cdot+\eta)-\chi_j(\cdot)}_{L^\infty}
    \n{
    \chi_jE_{j'}^{k,\ell}[\psi]}_{L^2}
    \\
    &\qquad 
    +
    \n{E_{j'}^{k,\ell}[\psi](\cdot+\eta) - E_{j'}^{k,\ell}[\psi](\cdot)}_{L^2}
    \\
    &\quad 
    \leq 
    C
    \min \Mp{1,2^{-j}|\eta|}
    \delta_{j,k,\ell}
    \n{\chi_\ell \psi}_{L^2}
    \\
    &\qquad
    +
    C
    \sp{\delta_{j,k,\ell}}^{1/2}
    \n{\chi_\ell \psi}_{L^2}
    \sp{\min \Mp{1,2^{-j}|\eta|}}^{1/2}
    \\
    &\quad 
    \leq 
    C
    \sp{\delta_{j,k,\ell}}^{1/2}
    \n{\chi_\ell \psi}_{L^2}
    \sp{\min \Mp{1,2^{-j}|\eta|}}^{1/2}
\end{align}
for $|j'-j| \leq 1$.
Thus, we have via $|\nu_{N,k}| \leq 1/N$ that
\begin{align}
    &
    \n{\sp{\chi_j\mathcal{E}_N[\psi]}(\cdot+\eta) - \sp{\chi_j\mathcal{E}_N[\psi]}(\cdot)}_{L^2}
    \\
    &\quad
    \leq 
    \frac{C}{N}
    \sum_{|j'-j| \leq 1}
    \sum_{(j',k,\ell) \in A}
    \n{
    \sp{\chi_jE_{j'}^{k,\ell}[\psi]}(\cdot+\eta)-\sp{\chi_jE_{j'}^{k,\ell}[\psi]}(\cdot)}_{L^2}
    \\
    &\quad 
    \leq 
    \frac{C}{N}
    \sum_{|j'-j|\leq 1}
    \sum_{(j',k,\ell) \in A}
    \sp{\delta_{j',k,\ell}}^{1/2}
    \n{\chi_\ell \psi}_{L^2}
    \sp{\min \Mp{1,2^{-j}|\eta|}}^{1/2}
    \\
    &\quad
    \leq 
    \frac{C}{N}
    \sum_{|j'-j| \leq 1}
    \sum_{j' \leq k+2}
    2^{\frac{j'-k}{2}}
    \n{\chi_k\psi}_{L^2}
    \sp{\min \Mp{1,2^{-j}|\eta|}}^{1/2}
    \\
    &\qquad
    +
    \frac{C}{N}
    \sum_{|j'-j| \leq 1}
    \sum_{k-5 \leq j' \leq k+2}
    \sum_{\ell < k}
    2^{\frac{\ell-k}{2}}
    \n{\chi_\ell\psi}_{L^2}
    \sp{\min \Mp{1,2^{-j}|\eta|}}^{1/2}
    \\
    &\qquad
    +
    \frac{C}{N}
    \sum_{|j'-j| \leq 1}
    \sum_{\ell-5 \leq j' \leq \ell+2}
    \sum_{k < \ell}
    2^{\frac{k - \ell}{2}}
    \n{\chi_\ell\psi}_{L^2}
    \sp{\min \Mp{1,2^{-j}|\eta|}}^{1/2},
\end{align}
which implies 
\begin{align}
    \Mp{
    \sum_{j \in \Z}
    \sp{
    \sup_{\eta \in \R^2 \setminus \{0\}}
    \frac{\n{\sp{\chi_j\mathcal{E}_N[\psi]}(\cdot+\eta)-\chi_j\mathcal{E}_N[\psi](\cdot)}_{L^2}}{\sp{\min \Mp{1,2^{-j}|\eta|}}^{1/2}}
    }^q
    }^{\frac{1}{q}}
    \leq 
    \frac{C}{N}
    \n{\psi}_{X^q}.
\end{align}
Thus, we complete the proof.
\end{proof}
\subsection{Proof of Proposition \ref{prop:Oseen}}
Now, we are ready to prove Proposition \ref{prop:Oseen}.
It follows from Proposition \ref{prop:E_N} that 
\begin{align}
    \n{\mathcal{E}_N\mathcal{R}_N}_{X^q \to X^q}
    \leq 
    \n{\mathcal{E}_N}_{Y^q \to X^q}
    \n{\mathcal{R}_N}_{X^q \to Y^q}
    \leq
    \frac{C}{N}.
\end{align}
Hence, choosing $N$ sufficiently large so that $\n{\mathcal{E}_N\mathcal{R}_N}_{X^q \to X^q} \leq 1/2$, 
we see that the operator $\Id + \mathcal{E}_N\mathcal{R}_N$ is invertible on $X^q_{\rm sym}$ and we define
\begin{align}
    \varphi_N
    :=-\sp{\Id + \mathcal{E}_N\mathcal{R}_N}^{-1}\mathcal{E}_N\mathcal{W}\omega_N \in X^q_{\rm sym}, 
    \qquad
    \psi_N 
    := \mathcal{W}\omega_N + \mathcal{R}_N \varphi_N \in Y^q.
\end{align}
For the estimate of $\varphi_N$ and $\psi_N$, we see that 
\begin{align}
    &
    \n{\mathcal{W}\omega_N}_{X^q} \leq C \n{\omega_N}_{\ell^q}
    \leq CN^{\frac{1}{q}},
    \\
    &
    \n{\varphi_N}_{X^q}
    \leq \sum_{n=0}^\infty 
    \sp{\n{\mathcal{E}_N\mathcal{R}_N}_{X^q \to X^q}}^n \n{\mathcal{E}_N\mathcal{W}\omega_N}_{X^q}
    \leq 
    CN^{\frac{1}{q}-1},
    \\
    &
    \n{\psi_N}_{X^q} \leq C\n{\omega_N}_{\ell^q} + C \n{\varphi_N}_{X^q} \leq CN^{\frac{1}{q}}.
\end{align}
For the lower bound estimate of $\psi_N$, 
since we see that  
\begin{align}
    \abso{\chi_j(\xi)(\mathcal{W}\omega_N)(\xi)}
    &
    =
    \frac{|\cos (2\theta_\xi)|}{|\xi|}
    \chi_j(\xi)
    \sum_{m \in \Z}
    \omega_{N,m}\chi_m(\xi)
    \\
    &
    \geq 
    \omega_{N,j}
    \frac{|\cos (2\theta_\xi)|}{|\xi|}
    \chi_j(\xi)^2,
\end{align}
it holds
\begin{align}
    \n{\mathcal{W}\omega_N}_{X^q}
    &
    \geq 
    \sp{
    \sum_{j \in \Z}
    \n{\chi_jW_j\omega_{N,j}}_{L^2}^q
    }^{\frac{1}{q}}
    \\
    &
    \geq
    c
    \n{\omega_N}_{\ell^q}
    \geq cN^{\frac{1}{q}},
\end{align}
which implies
\begin{align}
    \n{\psi_N}_{X^q} 
    \geq
    \n{\mathcal{W}\omega_N}_{X^q} - \n{\mathcal{R}_N\varphi_N}_{X^q}
    \geq 
    cN^{\frac{1}{q}} - CN^{\frac{1}{q}-1} \geq c N^{\frac{1}{q}}
\end{align}
for sufficiently large $N$.
Since 
$\widetilde{\mathcal{L}}_N\mathcal{W}\omega_N=0$, 
$\widetilde{\mathcal{L}}_N \mathcal{R}_N = \Id$, 
and 
\begin{align}
    \mathcal{L}_N \psi_N 
    ={}
    \sp{\widetilde{\mathcal{L}}_N + \mathcal{E}_N}
    \lp{\mathcal{W}\omega_N + \mathcal{R}_N\varphi_N}
    =
    \mathcal{E}_N\mathcal{W}\omega_N + \sp{\Id + \mathcal{E}_N \mathcal{R}_N}\varphi_N
    =
    0,
\end{align}
$\psi_N$ is a solution to \eqref{eq:sc-O}.
It is easy to check that $w_N:=N^{-1} \mathcal{U}[\psi_N]$ is a solution to \eqref{eq:Oseen} and we have $\n{w_N}_{\dB_{2,q}^0} \leq C N^{-1} \n{\psi_N}_{X^q} \leq C N^{\frac{1}{q}-1}$. 
Thus, we complete the proof.

\section{Proof of Theorem \ref{thm:main}}\label{sec:pf}
We are in a position to present the proof of Theorem \ref{thm:main}.
It suffices to consider only the case of
$1<q\leq2$ since the result for $q>2$ follows from the
case $q=2$ and the embeddings
$\dB_{2,2}^s(\R^2)\hookrightarrow\dB_{2,q}^s(\R^2)$.
Let $v_N, w_N \in \dB_{2,q}^0(\R^2)$ be the vector fields constructed in the previous section. 
Let us define
\begin{align}
    u_N^+ := v_N + w_N, 
    \qquad
    u_N^- := v_N - w_N,
\end{align}
and 
\begin{align}
    f_N :={}& - \Delta v_N + \mathbb{P} \sp{(v_N \cdot \nabla)v_N + (w_N \cdot \nabla)w_N}
    \\
    ={}& - \Delta v_N + \mathbb{P} \div\sp{v_N \otimes v_N + w_N \otimes w_N}.
\end{align}
Note that the product terms $v_N \otimes v_N$ and $w_N \otimes w_N$ are well-defined since $v_N,w_N \in \dB_{2,q}^0(\R^2) \subset L^2(\R^2)$.
Then, we see that $u_N^\pm$ solve \eqref{eq:sNS} with $f=f_N$ and 
\begin{align}
    \n{u_N^\pm}_{\dB_{2,q}^0} \leq \n{v_N}_{\dB_{2,q}^0} + \n{w_N}_{\dB_{2,q}^0} \leq CN^{\frac{1}{q}-1},
    \qquad
    u_N^+ - u_N^- = 2w_N \not \equiv 0.
\end{align}
Therefore, the proof is reduced to verifying that $f_N$ is small in $\dB_{2,q}^{-2}(\R^2)$.
Although the nonlinear products are well-defined, $f_N \in \dB_{2,q}^{-2}(\R^2)$ is not obvious since the paraproduct is not bounded from $\dB_{2,q}^0(\R^2) \times \dB_{2,q}^0(\R^2)$ to $\dB_{2,q}^{-1}(\R^2)$.
To overcome this, we use a cancellation property for the nonlinear terms.
We first focus on the estimate of $\mathbb{P}\div(\Delta_k w_N \otimes \Delta_\ell w_N)$ and consider the matrix
\begin{align}
    \mathscr{F}\lp{\Delta_k w_N \otimes \Delta_\ell w_N}(\xi)
    ={}&
    \frac{1}{2\pi}
    \int_{\R^2}
    (\chi_k\widehat{w_N})(\xi-\eta) \otimes (\chi_\ell\widehat{w_N})(\eta)
    \, 
    d\eta
    \\
    ={}&
    \frac{-1}{2\pi N^2}
    \int_{\R^2}
    (\chi_k\psi_N)(\xi-\eta)(\chi_\ell\psi_N)(\eta)
    \frac{(\xi-\eta)^\perp}{|\xi-\eta|} \otimes \frac{\eta^\perp}{|\eta|}
    \, 
    d\eta.
\end{align}
Let us rewrite this matrix at $\xi=0$ as
\begin{align}
    \mathscr{F}\lp{\Delta_k w_N \otimes \Delta_\ell w_N}(0)
    &=
    \begin{pmatrix}
        a_{N,k,\ell} & -b_{N,k,\ell} \\
        -b_{N,k,\ell} & c_{N,k,\ell}
    \end{pmatrix},
    \\
    a_{N,k,\ell}
    &:=\frac{1}{2\pi N^2}
    \int_{\R^2}
    \frac{\eta_2^2}{|\eta|^2}
    (\chi_k\chi_\ell\psi_N^2)(\eta)
    \, 
    d\eta,
    \\
    b_{N,k,\ell}
    &:=\frac{1}{2\pi N^2}
    \int_{\R^2}
    \frac{\eta_1\eta_2}{|\eta|^2}
    (\chi_k\chi_\ell\psi_N^2)(\eta)
    \, 
    d\eta,
    \\
    c_{N,k,\ell}
    &:=\frac{1}{2\pi N^2}
    \int_{\R^2}
    \frac{\eta_1^2}{|\eta|^2}
    (\chi_k\chi_\ell\psi_N^2)(\eta)
    \, 
    d\eta.
\end{align}
By the change of variables $\eta \mapsto \eta^\perp$ and using the symmetric property of $\psi_N \in L^2_{\rm sym}$, we see that 
$a_{N,k,\ell}=c_{N,k,\ell}$ and $b_{N,k,\ell}=-b_{N,k,\ell}$.
Thus, it holds
\begin{align}
    \mathscr{F}\lp{\Delta_k w_N \otimes \Delta_\ell w_N}(0)
    =
    a_{N,k,\ell}\Id,
\end{align}
which yields 
\begin{align}
    i\widehat{\mathbb{P}}(\xi)
    \mathscr{F}[\Delta_k w_N \otimes \Delta_\ell w_N](0)\xi
    =
    ia_{N,k,\ell} 
    \widehat{\mathbb{P}}(\xi)
    \xi
    =
    0.
\end{align}
It follows from 
\begin{align}
    &
    \Delta_j
    \sum_{|k-\ell|\leq 2}
    (\Delta_kf \Delta_\ell g)
    =
    \Delta_j
    \sum_{k \geq j-4}
    \sum_{|k-\ell|\leq 2}
    (\Delta_kf \Delta_\ell g)
\end{align}
that
\begin{align}
    &
    \mathscr{F}
    \lp{\Delta_j
    \sum_{|k-\ell| \leq 2}
    \mathbb{P} \div (\Delta_kw_N \otimes \Delta_\ell w_N)
    }(\xi)\\
    &\quad
    =
    \chi_j(\xi)
    \sum_{k \geq j-4}
    \sum_{|k-\ell|\leq 2}
    i\widehat{\mathbb{P}}(\xi)
    \sp{
    \mathscr{F}[\Delta_kw_N \otimes \Delta_\ell w_N](\xi)
    -
    \mathscr{F}[\Delta_kw_N \otimes \Delta_\ell w_N](0)
    }\xi
    \\
    &
    \quad
    =
    \frac{i}{2\pi}
    \chi_j(\xi)
    \widehat{\mathbb{P}}(\xi)
    \sum_{k \geq j-4}
    \sum_{|k-\ell|\leq 2}
    \\
    &\qquad \quad 
    \left(\int_{\R^2}
    \sp{(\chi_k\widehat{w_N})(\xi-\eta)-(\chi_k\widehat{w_N})(-\eta)}
    \otimes 
    (\chi_\ell\widehat{w_N})(\eta)
    \, 
    d\eta \right)\xi.
\end{align}
We then have
\begin{align}
    &
    \abso{
    \mathscr{F}
    \lp{\Delta_j
    \sum_{|k-\ell| \leq 2}
    \mathbb{P} \div (\Delta_kw_N \otimes \Delta_\ell w_N)
    }(\xi)
    }
    \\
    &\quad
    \leq
    \chi_j(\xi)|\xi|
    \sum_{k \geq j-4}
    \sum_{|k-\ell|\leq 2}
    \int_{\R^2}
    \abso{(\chi_k\widehat{w_N})(\xi-\eta)-(\chi_k\widehat{w_N})(-\eta)}
    |(\chi_\ell\widehat{w_N})(\eta)|
    \, 
    d\eta
    \\
    &\quad 
    \leq 
    C
    \chi_j(\xi)|\xi|
    \sum_{k \geq j-4}
    \sum_{|k-\ell|\leq 2}
    \n{(\chi_k\widehat{w_N})(\cdot-\xi)-(\chi_k\widehat{w_N})(\cdot)}_{L^2}
    \n{\chi_\ell\widehat{w_N}}_{L^2}
    \\
    &\quad 
    \leq 
    C
    \chi_j(\xi)|\xi|
    \sum_{k \geq j-4}
    \sum_{|k-\ell|\leq 2}
    \sp{\min\{1,2^{-k}|\xi|\}}^{1/2}
    \\
    &\qquad \qquad \qquad \qquad \qquad \qquad \quad
    \sup_{\xi' \neq 0}
    \frac{\n{(\chi_k\widehat{w_N})(\cdot+\xi')-(\chi_k\widehat{w_N})(\cdot)}_{L^2}}{\sp{\min\{1,2^{-k}|\xi'|\}}^{1/2}}
    \n{\chi_\ell\widehat{w_N}}_{L^2},
\end{align}
which implies 
\begin{align}
    &
    2^{-2j}
    \n{\Delta_j
    \sum_{|k-\ell| \leq 2}
    \mathbb{P} \div (\Delta_kw_N \otimes \Delta_\ell w_N)}_{L^2}
    \\
    &\quad
    \leq 
    C
    \sum_{k \geq j-4}
    \sum_{|k-\ell|\leq 2}
    2^{\frac{j-k}{2}}
    \sup_{\xi' \neq 0}
    \frac{\n{(\chi_k\widehat{w_N})(\cdot+\xi')-(\chi_k\widehat{w_N})(\cdot)}_{L^2}}{\sp{\min\{1,2^{-k}|\xi'|\}}^{1/2}}
    \n{\chi_\ell\widehat{w_N}}_{L^2}.
\end{align}
Taking $\ell^q$-norm and using the Hausdorff--Young inequality for the discrete convolution, we obtain 
\begin{align}
    \n{
    \sum_{|k-\ell| \leq 2}
    \mathbb{P} \div (\Delta_kw_N \otimes \Delta_\ell w_N)}_{\dB_{2,q}^{-2}}
    \leq
    \frac{C}{N^2}
    \n{\psi_N}_{X^q}^2
    \leq 
    CN^{2(\frac{1}{q}-1)}.
\end{align}
By the standard paraproduct estimates (see \cite[Theorem 2.47, p.~87]{Bah-Che-Dan-11}),
\begin{align}
    &
    \n{
    \sum_{\ell \leq k - 3}
    \mathbb{P} \div (\Delta_kw_N \otimes \Delta_\ell w_N)}_{\dB_{2,q}^{-2}}
    \leq 
    C\n{w_N}_{\dB_{2,q}^0}^2
    \leq 
    CN^{2(\frac{1}{q}-1)},
    \\
    &\n{
    \sum_{k \leq \ell - 3}
    \mathbb{P} \div (\Delta_kw_N \otimes \Delta_\ell w_N)}_{\dB_{2,q}^{-2}}
    \leq 
    C\n{w_N}_{\dB_{2,q}^0}^2
    \leq 
    CN^{2(\frac{1}{q}-1)}.
\end{align}
Thus, we obtain 
\begin{align}
    \n{
    \mathbb{P} \div (w_N \otimes w_N)}_{\dB_{2,q}^{-2}}
    \leq 
    CN^{2(\frac{1}{q}-1)}.
\end{align}
Since $\| \phi_N \|_{X^q}\leq CN^{\frac{1}{q}-1}$ and $\phi_N \in \widetilde{X}^q_{\rm sym}$, the same argument as above yields 
\begin{align}
    \n{\mathbb{P} \div (v_N \otimes v_N)}_{\dB_{2,q}^{-2}}
    \leq 
    CN^{2(\frac{1}{q}-1)}.
\end{align}
Hence, we see that $f_N \in \dB_{2,q}^{-2}(\R^2)$ and 
\begin{align}
    \n{f_N}_{\dB_{2,q}^{-2}}
    \leq 
    C
    \n{v_N}_{\dB_{2,q}^0}
    +
    \n{\mathbb{P} \div (v_N \otimes v_N)}_{\dB_{2,q}^{-2}}
    +
    \n{\mathbb{P} \div (w_N \otimes w_N)}_{\dB_{2,q}^{-2}}
    \leq 
    CN^{\frac{1}{q}-1},
\end{align}
which completes the proof.

\noindent
{\bf Acknowledgments.} \\
The author was supported by JSPS KAKENHI, Grant Number JP25K17279.
During the initial brainstorming stage of this research, the author used ChatGPT 5.6 Pro, whereas the author carried out all calculations and assumes full responsibility for all contents.

\noindent
{\bf Conflict of interest.} \\
The author declares no conflict of interest.

\noindent
{\bf Data availability.} \\
No data were generated or analyzed in this study.

\end{document}